\RequirePackage{fix-cm}
\documentclass[smallextended]{svjour3}       
\smartqed  

\usepackage{graphicx}
\usepackage{amsmath}
\usepackage{amssymb}
\usepackage{amsfonts}
\usepackage{mathtools}
\usepackage{bbm}
\usepackage{algorithm}
\usepackage{algpseudocode}
\usepackage{tikz}
\usetikzlibrary{positioning,fit,shapes, arrows, positioning, arrows.meta}
\usepackage{circuitikz}
\usepackage{pgfplots}
\usepackage{subcaption}
\usepackage{booktabs}
\usepackage{multirow}
\usepackage{array}
\usepackage{hyperref}
\usepackage{textcomp}
\usepackage{appendix}
\usepackage{fancyvrb}
\usepackage{verbatimbox}
\usepackage{makeidx}

\newcommand{\bft}[1]{\mathbf{#1}}
\DeclareMathOperator*{\argmin}{argmin}

\newcommand{\R}{\mathbb{R}}

\pgfplotsset{compat=1.18}

\begin{document}

\title{Automating Reformulation for Parallel ADMM
}


\author{Kaizhao Sun  \and
        Baihao Wu    \and 
        Kun Yuan     \and
        Wotao Yin
}


\institute{K. Sun \at
DAMO Academy, Alibaba Group (U.S.) Inc. \\
\email{kaizhao.s@alibaba-inc.com}  
\and
B. Wu \at
Academy for Advanced Interdisciplinary Studies, Peking University \\
\email{baihaowu@pku.edu.cn}  
\and 
K. Yuan \at 
Center for Machine Learning Research, Peking University\\ 
\email{kunyuan@pku.edu.cn}  
\and 
W. Yin \at 
DAMO Academy, Alibaba Group (U.S.) Inc. \\
\email{wotao.yin@alibaba-inc.com}  
}

\date{Received: date / Accepted: date}

\maketitle

\begin{abstract}
Many real-world optimization models contain exploitable sparsity and block structure, but this structure is often obscured in algebraic form, limiting the effectiveness of modern parallel algorithms. We propose an automatic pipeline that converts a generic multiblock problem into a canonical two-block formulation suitable for parallel Alternating Direction Method of Multipliers (ADMM). The method constructs a coupling graph, applies an edge-subdivision–based bipartization to obtain a bipartite representation, and produces an ADMM-ready decomposition with independent subproblems. Fast graph-traversal heuristics, a new mixed-integer linear program (MILP), and a learning-based graph neural network (GNN) surrogate model are developed to guide edge subdivision. Numerical experiments demonstrate that the resulting reformulations yield strong parallel ADMM performance. The entire pipeline is implemented in the open-source Julia package \texttt{PDMO.jl}.
\keywords{Problem Reformulation \and Bipartization \and Parallel Computing \and ADMM}
\subclass{
90-04  \and 
90C35 
}
\end{abstract}

\section{Introduction}
Real-world optimization models rarely arrive in ``clean'' algebraic formats commonly used in textbooks and in many algorithm papers. In practice, variables naturally group into \emph{blocks}, which may have different sizes, while there are constraints \emph{coupling} two or more blocks through shared linear relations. Writing such models in a generic algebraic format, typically with only a few aggregated variable and constraint groups for convenience, can obscure the underlying sparsity and interaction structure that modern large-scale or parallel algorithms rely on.

\paragraph{The obstacle: structure for splitting methods is hard to craft by hand.}
For first-order splitting methods, the main difficulty is not merely specifying a model, but \emph{reformulating} the original problem into a form that yields simple, separable subproblems and a clear communication pattern. In this paper, we use ``decoupling'' to mean introducing auxiliary variables and consistency constraints so that the resulting formulation is \emph{equivalent} to the original model but exposes a more favorable block structure to the algorithm. This issue arises for classical two-block Alternating Direction Method of Multipliers (ADMM) and also for its more general variant Function-Linearized Proximal ADMM (FLiP-ADMM) \cite{ryu2022large}: while FLiP-ADMM offers additional flexibility (e.g., allowing gradient-based and/or linearized treatment inside subproblems), it still starts from the same backbone of two objective blocks connected by linear constraints, and therefore benefits from the same structural preparation. These considerations motivate an automated pipeline that turns a generic multiblock model into an ``ADMM/FLiP-ready'' two-block decomposition, rather than relying on ad hoc, expert-crafted reformulation.

\begin{figure}[ht]
\centering
\begin{minipage}{0.4\textwidth}
\centering
\begin{circuitikz}[american, thick]
\node[circle, fill=black, inner sep=2pt, label=right:{Node 1}] (N1) at (90:2) {};
\node[circle, fill=black, inner sep=2pt, label=below left:{Node 2}] (N2) at (210:2) {};
\node[circle, fill=black, inner sep=2pt, label=below right:{Node 3}] (N3) at (330:2) {};
\draw
  (N1) to[R=$R_{1}$, *-*, i>_=$I_{1}$] (N2)
  (N2) to[R=$R_{2}$, *-*, i>_=$I_{2}$] (N3)
  (N3) to[R=$R_{3}$, *-*, i>_=$I_{3}$] (N1);
\draw[->] (0,3) -- node[right] {$J_1$} (N1);
\draw[->] (-3,-1) -- node[above] {$J_2$} (N2);
\draw[->] (3,-1) -- node[above] {$J_3$} (N3);
\end{circuitikz}
\end{minipage}
\hfill
\begin{minipage}{0.45\textwidth}
\begin{align*}
\min_{I_{1}, I_{2}, I_{3}}\quad & R_{1} I_{1}^2 + R_{2}I_{2}^2 + R_{3}I_{3}^2 \\
\mathrm{s.t.}\quad & I_{1} - I_{3} = J_1 \\
& I_{2} - I_{1} = J_2 \\
& I_{3} - I_{2} = J_3
\end{align*}
\end{minipage}
\caption{A triangle resistive network and power dissipation minimization.}\label{fig: demo}
\end{figure}
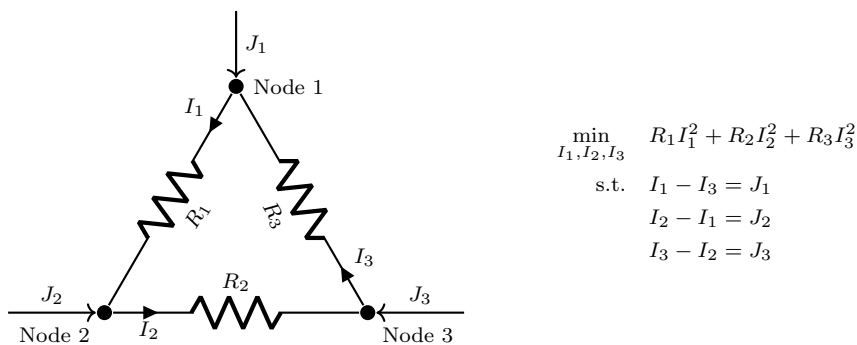

\paragraph{A small example: equivalent splittings, different behavior.}
We begin with a simple circuit model to illustrate (i) how equivalent decoupling transformations arise naturally and (ii) why their choice can matter algorithmically.

\begin{example}\label{example: demo}
Consider a resistive circuit consisting of three nodes connected in a triangle by resistors $R_{1}$, $R_{2}$, and $R_{3}$. Directed edges carry branch currents $I_{1}$, $I_{2}$, and $I_{3}$, respectively; each node $i\in \{1,2,3\}$ has an external injection current $J_i$ entering or leaving the network. The objective is to determine the branch currents that minimize total power dissipation subject to Kirchhoff's Current Law (KCL) at each node; see Fig. \ref{fig: demo}.

This formulation is already small, but it exhibits the key obstruction: each KCL constraint couples two currents, and the coupling pattern forms a triangle. To prepare such a model for two-block splitting methods, one can decouple a selected coupling by introducing an auxiliary variable. For example, breaking the coupling in the first constraint via an auxiliary variable $z=I_1$ yields
\begin{subequations}\label{eq: circuit demo}
\begin{align}
\min_{I_{1}, I_{2}, I_{3},z}\quad &  R_{1} I_{1}^2 + R_{2}I_{2}^2 + R_{3}I_{3}^2 \\
\mathrm{s.t.} \quad &
\begin{bmatrix}
1 &   \\
1 &   \\
  & 1 \\
  & 1 \\
\end{bmatrix}
\begin{bmatrix}
z \\ I_{2}
\end{bmatrix}
+
\begin{bmatrix}
-1 &   \\
   & -1   \\
-1 &    \\
   & -1\\
\end{bmatrix}
\begin{bmatrix}
I_{1} \\ I_{3}
\end{bmatrix}
=\begin{bmatrix}
0\\ J_1 \\J_2 \\-J_3
\end{bmatrix}.
\end{align}
\end{subequations}
This is an \emph{equivalent} reformulation in the usual splitting sense: the added variable $z$ is tied to the original variable $I_1$ by consistency constraints ($z-I_1=0, z-I_3 = J_1$) so that eliminating $z$ recovers the original KCL system, but this explicit split changes the way couplings are presented to an algorithm. Instead of this split, different ones can be made to decouple the second ($z - I_2=0, z-I_1 = J_2$) or the third constraint ($z - I_3=0, z-I_2=J_3$).

We apply ADMM\footnote{See Section \ref{subsec: admm} for algorithmic background.} with different penalty parameters $\rho$ to each of the three resulting reformulations, and plot the trajectories of the sum of primal and dual residuals in Fig.~\ref{figure: demo_trajectory}. (Here the primal residual measures constraint violation, and the dual residual measures successive changes in the dual/consensus quantities, as in standard ADMM diagnostics.) The trajectories show that different but equivalent decouplings can lead to markedly different convergence behavior. This motivates automating the choice of decoupling structure, rather than relying on manual trial-and-error.
\begin{figure}[ht]
\centering
\begin{subfigure}{0.32\textwidth}
\centering
\includegraphics[width=\linewidth]{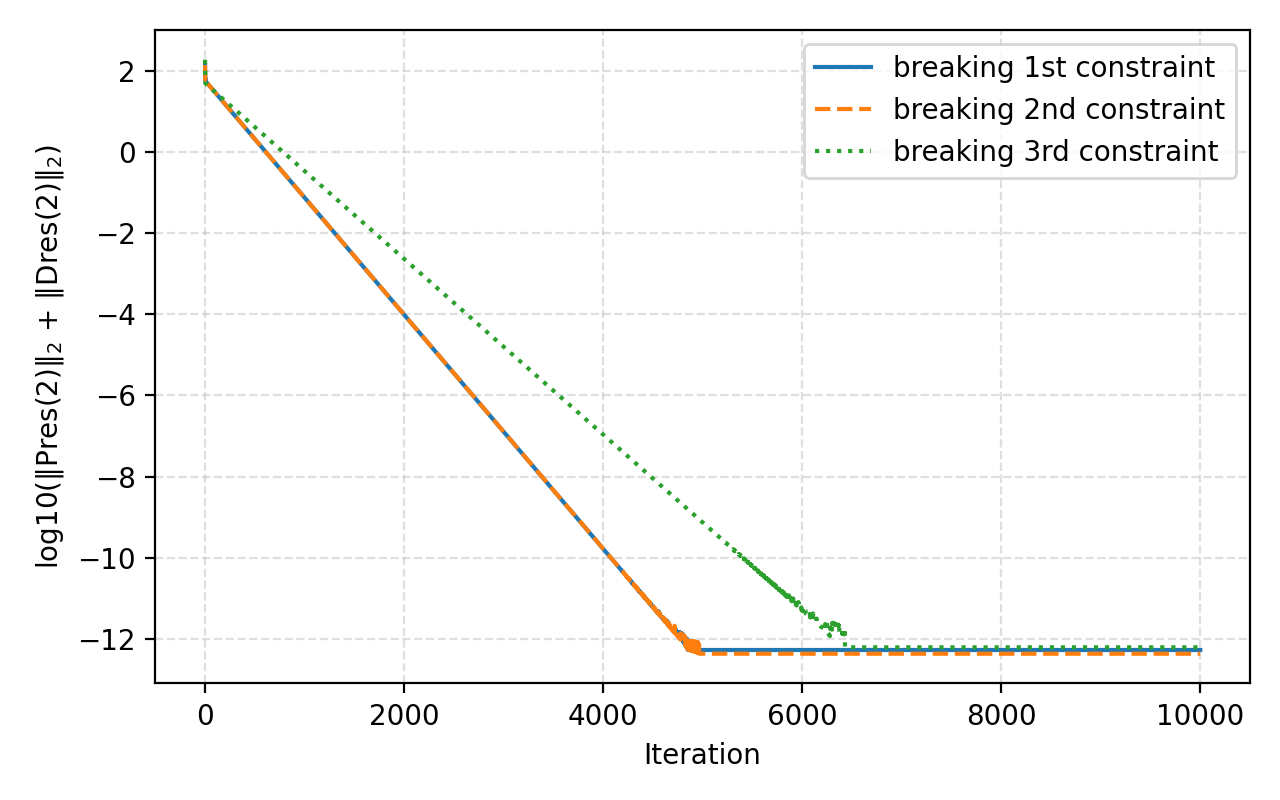}
\caption{$\rho=1$}
\end{subfigure}
\hfill
\begin{subfigure}{0.32\textwidth}
\centering
\includegraphics[width=\linewidth]{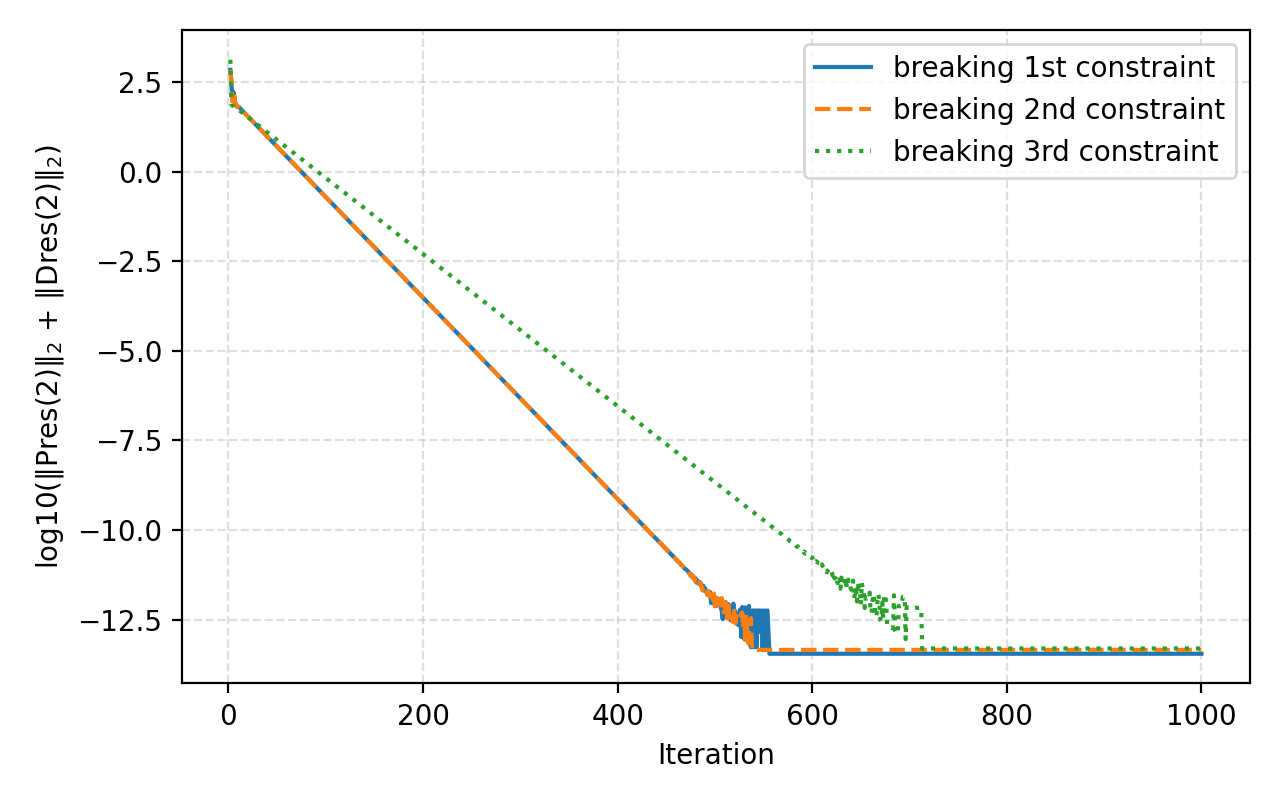}
\caption{$\rho=10$}
\end{subfigure}
\hfill
\begin{subfigure}{0.32\textwidth}
\centering
\includegraphics[width=\linewidth]{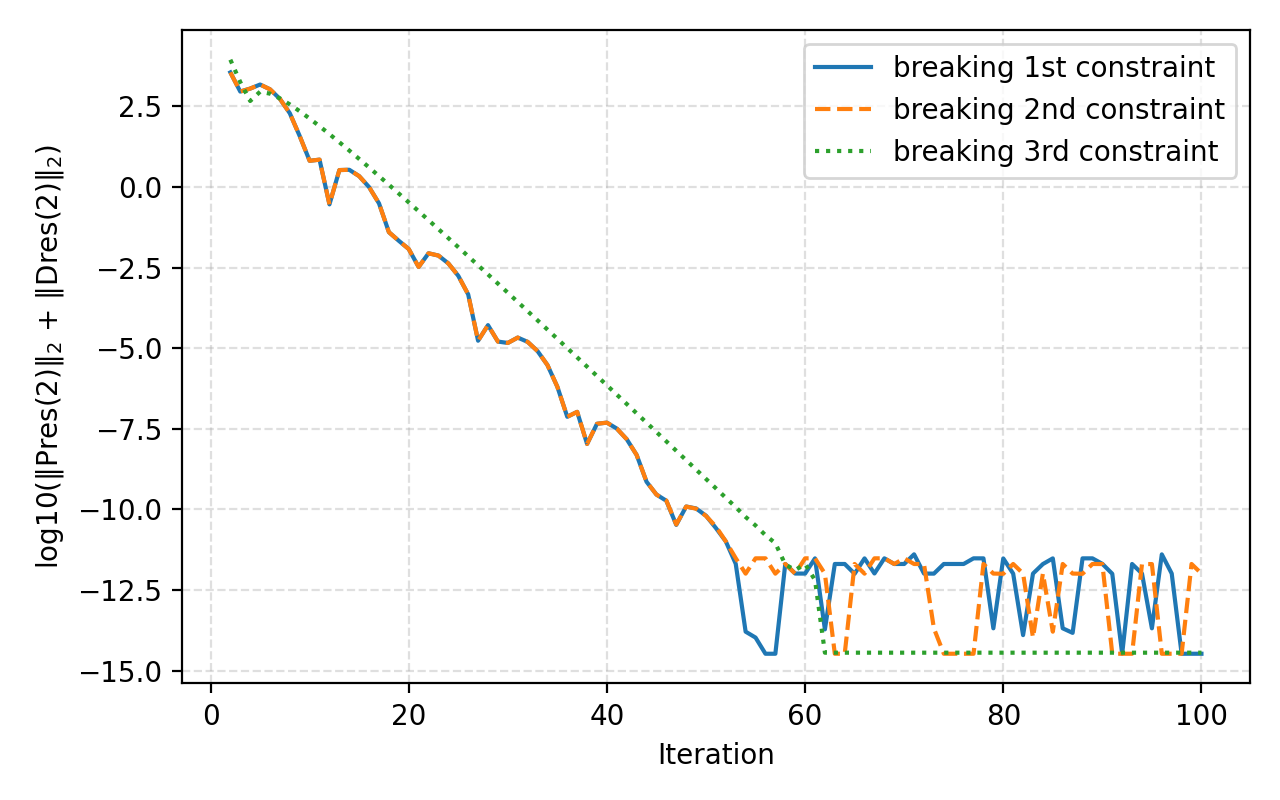}
\caption{$\rho=100$}
\end{subfigure}
\caption{Residual trajectories of ADMM with $(R_{1}, R_{2}, R_{3})$ = (1.0e-6, 1.0e2, 1.0e8) and $(J_1, J_2, J_3) = (-50,100,-50)$.}\label{figure: demo_trajectory}
\end{figure}
\end{example}

The example suggests a broader principle: for optimization problems whose variable blocks are coupled by linear constraints, the key design choice is the \emph{pattern of block couplings} induced by linear constraints. We capture this pattern using a \emph{coupling hypergraph}, whose vertices represent variable blocks and whose edges represent linear couplings between blocks. We then apply an \emph{edge-subdivision} procedure: subdividing a selected edge inserts an intermediate vertex, corresponding to an auxiliary block/variable, and replaces a direct coupling by two simpler couplings through that auxiliary, in the same spirit as introducing $z$ in Example~\ref{example: demo}. By subdividing an appropriate set of edges, we obtain a \emph{bipartite} coupling graph, which is naturally aligned with two-block splitting: the two parts define the two variable groups, while the bipartite edges define the linear couplings between them. This bipartite structure is precisely what enables efficient parallel updates, e.g., $(z, I_2)$ or $(I_1,I_3)$ in \eqref{eq: circuit demo}, in classic two-block ADMM, and it also provides the structural backbone needed by more flexible ADMM-type templates such as FLiP-ADMM.

Building on these ideas, we propose an automatic reformulation pipeline that converts a generic multiblock model into a canonical two-block form suitable for parallel ADMM and FLiP-ADMM. As illustrated in Fig.~\ref{fig: diagram}, the process extracts block interactions, constructs a coupling graph, enforces bipartiteness via selective edge subdivision, and produces a parallel-ready splitting. Conceptually, this serves as a \emph{compiler-like pipeline} that bridges modeling and algorithmic execution by translating structural information into a convergence-compatible formulation. Rather than introducing a new ADMM variant, our goal is to automate the structural reformulation that practitioners typically craft by hand when preparing a complex model for ADMM-type methods.

\begin{figure}[ht]
\centering
\begin{tikzpicture}[
    box/.style={draw, thick, inner sep=0pt}
]
\node[box] (B1) at (0,0)
    {\begin{tabular}{c}
        Multiblock \\
        model
     \end{tabular}
    };
\node[box] (B2) at (3.0,0)
    {\begin{tabular}{c}
        Coupling-graph \\
        construction
     \end{tabular}
    };
\node[box] (B3) at (6.3,0)
    {\begin{tabular}{c}
        Bipartization via \\
        edge subdivision
     \end{tabular}
    };
\node[box] (B4) at (9.5,0)
    {\begin{tabular}{c}
        Run ADMM \\
        in parallel
     \end{tabular}
     };
\draw[->, thick] (B1) -- (B2);
\draw[->, thick] (B2) -- (B3);
\draw[->, thick] (B3) -- (B4);
\end{tikzpicture}
\caption{Proposed bipartization pipeline.}\label{fig: diagram}
\end{figure}
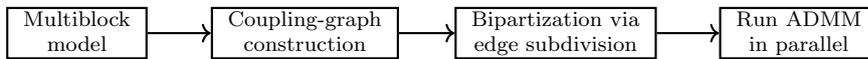
The term ``bipartization” has no universally accepted formal definition in graph theory. It has been used in the literature with different meanings depending on the context, such as edge deletion \cite{pilipczuk2019edge}, vertex deletion \cite{choi1989graph}, or structural transformations aimed at enforcing bipartiteness \cite{miotk2019bipartization}. In this paper, we adopt a specific meaning: bipartization refers to the process of converting a graph into a bipartite graph via edge subdivision, i.e., by inserting auxiliary vertices along selected edges to eliminate odd cycles.

\paragraph{Contributions preview.}
Concretely, this paper (i) formalizes a general multiblock abstraction to capture block-structured models, (ii) develops algorithmic bipartization strategies (heuristic, optimization-based, and learning-based) to produce bipartite/two-block decompositions, and (iii) validates the impact of these reformulations through numerical experiments and an open-source Julia implementation \texttt{PDMO.jl}.

In the next two subsections, we first provide algorithmic background on ADMM to clarify the target two-block structure, and then review related work on problem reformulation and decomposition to position our contributions.

\subsection{Background on ADMM and FLiP-ADMM} \label{subsec: admm}
ADMM~\cite{gabay1976dual,glowinski1975approximation} has emerged as a versatile operator splitting method supported by a mature theoretical foundation~\cite{boyd2011distributed,ryu2022large,wang2019global} and strong empirical performance~\cite{douglas1956numerical,peaceman1955numerical,shi_linear_2014}. Its appeal in our context is structural: when a model is cast into a two-block form with suitable separability, ADMM updates can decompose into many independent subproblems that are solved in parallel, with coordination handled through simple linear couplings.

Consider an optimization problem of the form
\begin{align}\label{eq: two_block_problem}
    \min_{(\bft{x}, \bft{z})}
    \{ f(\bft{x}) + h(\bft{z})~|~ \bft{A}\bft{x} + \bft{B}\bft{z} = \bft{b}, ~\bft{x}\in X, ~\bft{z} \in Z \}.
\end{align}
The associated augmented Lagrangian is
\begin{align}
L_{\rho}(\bft{x}, \bft{z}, \bft{\lambda})
:= f(\bft{x}) + h(\bft{z})
+ \langle \bft{\lambda}, \bft{A}\bft{x} + \bft{B}\bft{z} - \bft{b}\rangle
+ \frac{\rho}{2}\|\bft{A}\bft{x} + \bft{B}\bft{z} - \bft{b}\|^{2},
\end{align}
where $\rho>0$ is the penalty parameter and $\bft{\lambda}$ is the Lagrange multiplier.
A standard ADMM iteration consists of the following updates:
\begin{subequations}\label{eq: admm}
\begin{align}
    \bft{x}^{k+1} &= \argmin_{\bft{x}\in X}
    L_{\rho}(\bft{x}, \bft{z}^{k}, \bft{\lambda}^{k}), \label{eq: admm_x}\\
    \bft{z}^{k+1} &= \argmin_{\bft{z}\in Z}
    L_{\rho}(\bft{x}^{k+1}, \bft{z}, \bft{\lambda}^{k}), \label{eq: admm_z}\\
    \bft{\lambda}^{k+1}
    &= \bft{\lambda}^{k} + \rho (\bft{A}\bft{x}^{k+1} + \bft{B}\bft{z}^{k+1} - \bft{b}).
\end{align}
\end{subequations}
When $\bft{A}$ (respectively $\bft{B}$) admits a block-diagonal structure and $f$ (respectively $h$) is separable across the corresponding subblocks, the update \eqref{eq: admm_x} (respectively \eqref{eq: admm_z}) decomposes into multiple independent subproblems that can be solved fully in parallel. This is the structural property our bipartization pipeline is designed to induce from a generic multiblock input model.

However, most practical optimization problems do not naturally appear in the two-block form \eqref{eq: two_block_problem}, nor do they exhibit the block-diagonal/separable structure needed for efficient parallel ADMM. Although prior work on ADMM~\cite{hong2016convergence,wang2019global}, primal--dual methods~\cite{condat2013primal,vu2013splitting}, and block coordinate descent~\cite{tseng1997alternating,xu2013block} characterizes structural patterns that admit parallel updates, these methods generally assume that such structure is already present or can be crafted by experts. The main challenge is thus to reformulate a generic multiblock model into a representation that inherits the properties required by two-block splitting.

We also note that ADMM is only one instance of a broader family. In particular, FLiP-ADMM (see Section 8 of \cite{ryu2022large}) provides a more general template that can incorporate gradient-based steps and/or linearizations of terms inside the ADMM subproblems, offering additional modeling and computational flexibility. Well-known methods such as linearized ADMM and primal-dual hybrid gradient (PDHG) can be viewed as special cases of this FLiP-ADMM template. Our reformulations target the shared structural backbone---two objective blocks connected by linear constraints---so that practitioners can benefit from either classical ADMM updates or more flexible FLiP-ADMM-style subproblems after bipartization.

Finally, though many ADMM extensions to multiblock settings \cite{chen2016direct,davis2017three,gonccalves2016extending,sun2024dual} have been proposed, their convergence typically requires additional assumptions that may fail for general formulations. In contrast, the standard two-block ADMM \eqref{eq: admm} enjoys a more mature theoretical foundation, and a wide range of practical enhancements has been developed for this setting \cite{pollock2023filtering,mccann2024robust}. These considerations further motivate targeting a canonical two-block structure.

\subsection{Related Work on Problem Reformulation and Decomposition}
A substantial literature studies how to restructure models to improve scalability, decomposability, or solver performance. Our work is algorithm-aware in a specific sense: it aims to automate reformulations that produce bipartite/two-block coupling patterns tailored to parallel ADMM-type methods.

Several works exploit graph structure when it is intrinsic or explicitly provided. Shin et al.~\cite{shin2020decentralized} study decentralized solution schemes for graph-structured problems, e.g., optimal power flow (OPF), using overlapping domain decomposition and coordination, while \texttt{Plasmo.jl}~\cite{jalving2022graph} provides a graph-based modeling environment where the user explicitly encodes subsystems and constraints couplings. In contrast, our pipeline starts from a generic algebraic model and automatically extracts and manipulates the coupling structure to obtain a bipartite form suitable for ADMM.

Automated structure detection has also been developed for classical decomposition techniques. Generic Column Generation (\texttt{GCG})~\cite{gamrath2010experiments} identifies decompositions suitable for Dantzig--Wolfe~\cite{vanderbeck2006generic} and Benders~\cite{benders1962partitioning} reformulations by analyzing polyhedral structure and constraint linking patterns. 
A recent work \cite{naik2025variable} investigates automatic variable aggregation as a presolve procedure for nonlinear programs;
using a graph-based incidence representation, the authors design aggregation strategies that trade off dimensionality reduction against increased nonlinear coupling. 
While these methods demonstrate the value of automated decomposition, they target different decomposition paradigms than the bipartite/two-block structure sought by ADMM-type methods.

Another line of work exploits block structure at the numerical linear-algebra level. For example, \cite{rodriguez2020scalable} applies ADMM-based ideas to decompose and precondition block-structured KKT or saddle-point systems, and \cite{kang2014interior} discusses implicit Schur-complement decompositions and hybrid solvers within interior-point methods. These approaches operate after a formulation is fixed and focus on accelerating linear algebra kernels, whereas our work reformulates the model itself to expose parallel subproblems to first-order methods.

Disciplined convex programming (DCP) frameworks such as \texttt{CVX}~\cite{grant2009cvx} and \texttt{Convex.jl}~\cite{udell2014convex} transform high-level convex expressions into canonical conic forms, focusing on convexity verification and solver compatibility rather than decomposition quality for parallel splitting. General modeling languages such as \texttt{Pyomo}~\cite{hart2011pyomo}, \texttt{JuMP}~\cite{Lubin2023}, and \texttt{AMPL}~\cite{fourer1990modeling} provide expressive interfaces and solver interoperability, but do not automatically craft ADMM/FLiP-ready decompositions. Our work aims to fill this gap by automating bipartite/two-block reformulations from generic multiblock input models.

\subsection{Contributions}
Our main contributions are as follows.
\begin{itemize}
    \item \textbf{Multiblock abstraction.} We introduce a general Multiblock Problem (MBP) formulation that captures block-structured patterns commonly found in large-scale optimization models.
    \item \textbf{Automatic reformulation pipeline.} Building on MBP, we develop a complete pipeline that transforms an arbitrary multiblock model into a canonical two-block structure suitable for parallel solution by ADMM. The pipeline exposes latent sparsity, reorganizes block interactions, and produces an ADMM-ready decomposition under low structural assumptions and with controllable preprocessing cost. Because the transformation depends only on the problem’s sparsity pattern, it can be applied to individual instances or repeatedly across families of related problems. This pipeline addresses a key bottleneck in large-scale applications: constructing a splitting that preserves convergence guarantees without introducing excessive auxiliary variables. Rather than subdividing every edge, our bipartization strategies break odd cycles wisely, offering  explicitly control over the trade-off between improved decoupling and reformulation overhead. For sparse graphs, such as those arising in power and network systems, this trade-off is typically very favorable.

    \item \textbf{Bipartization strategies (BFS, MILP, and GNN).} A central component of our pipeline is bipartization via edge subdivision. We develop (i) fast graph-traversal strategies, such as breadth-first search (BFS), to obtain a valid bipartition through edge subdivision, and (ii) a novel mixed-integer linear programming (MILP) formulation that enforces bipartiteness while optimizing structural properties of the resulting reformulation that impact ADMM behavior. To address the scalability limitations of MILP on large graphs, we further introduce (iii) a Graph Neural Network (GNN) surrogate that learns to emulate optimal MILP partition decisions. This learning-based approach leverages the ability of GNNs to capture graph-structured dependencies and generalize across instances, enabling scalable partitioning for large-scale or recurring problem families. 
\item \textbf{Empirical validation and software.} We validate the effectiveness of the proposed approach through extensive numerical experiments across a diverse set of structured and synthetic models, demonstrating that the resulting bipartizations consistently improve parallel ADMM performance. All components are implemented and released in the open-source Julia package \texttt{PDMO.jl} available at 
\begin{center}
\url{https://github.com/alibaba-damo-academy/PDMO.jl}.
\end{center}
\end{itemize}

\subsection{Notations}
We denote the set of positive integers up to $p$ by $[p]$, real numbers by $\R$, and the $n$-dimensional real Euclidean space by $\R^n$. For vectors $x$ and $y$ of the same shape, the inner product of $x$ and $y$ is denoted by $\langle x, y \rangle$, and the Euclidean norm of $x$ is denoted by $\|x\|$. For $A \in \R^{m\times n}$, $\|A\|$ and $\sigma_{\min}(A)$ denote the largest and smallest singular value of $A$, respectively. For a set $X$, we use $\delta_X$ to denote the $0/\infty$-indicator function of $X$, i.e., $\delta_X(x) = 0$ if $x\in X$ and $+\infty$ otherwise. We use $\bft{0}$ for the zero array and $\mathrm{Id}$ for the identity mapping; their dimensions should be clear from the context.

When a graph $G$ with vertex set $V$ and edge set $E$ serves as the structural backbone of an optimization problem with variables $\bft{x}_i$'s, we use $\bft{x}_i$ and $i$ interchangeably to index vertices in $V$ or denote endpoints of edges in $E$. We use $I \sqcup J$ to denote the union of two \textit{disjoint} sets $I$ and $J$. Other notations will be explained upon their first appearance.

\section{Automatic Reformulation Pipeline}\label{sec: pipeline}

\subsection{Multiblock Problem Formulation}\label{sec: mbp}
Practitioners working with large-scale models often know informally which variables form natural blocks and how these blocks interact through localized constraints. Yet turning this intuition into a formulation suitable for scalable algorithms such as parallel ADMM is difficult. As demonstrated in Example~\ref{example: demo}, implementing ADMM typically requires reorganizing blocks manually, introducing auxiliary variables, and decoupling constraints to expose parallel subproblems. This process remains ad hoc, expert-driven, and poorly supported by existing modeling tools. This gap motivates the need for an explicit structural representation that captures sparsity and local interactions while providing a clean foundation for automatic reformulation. 

Example~\ref{example: demo} illustrates, in miniature, the type of structural patterns that arise in larger optimization models: the three branch currents naturally form three variable blocks, and each KCL equation couples a subset of these blocks. To formalize the class of models targeted by our reformulation pipeline, we introduce the following Multiblock Problem (MBP) formulation:
\begin{subequations}\label{eq: multiblock_nlp}
\begin{align}
\min_{(\bft{x}_1, ..., \bft{x}_n)} \quad  & \sum_{i=1}^n f_i(\bft{x}_i) + g_i(\bft{x}_i) \\
\mathrm{s.t.} \quad & \sum_{i \in S_k} A^k_i(\bft{x}_i) = b^k, ~ S_k \subseteq [n],~ k \in [m]. \label{eq: multiblock_constriants}
\end{align}
\end{subequations}
We explain different components shortly. In view of Example~\ref{example: demo}, we can identify $\bft{x}_i = I_i$, set $f_i(\bft{x}_i) = R_i \bft{x}_i^2$, and take $g_i(\bft{x}_i)=0$ for $i\in [3]$. The problem has $m=3$ linear constraints. For instance, the first KCL equation corresponds to $k=1$ with $S_1 = \{1,3\}$, where $A^1_1$ and $A^1_3$ denote $\mathrm{Id}$ and $-\mathrm{Id}$, respectively, and $b^1 = J_1$. The remaining two constraints are defined analogously.

This formulation reflects the structural patterns commonly found in large-scale optimization problems where the decision variables naturally decompose into $n$ \emph{blocks}, denoted $(\bft{x}_1,\dots,\bft{x}_n)$. Each block may represent a scalar, a vector, a matrix, or any application-specific array, and the overall model often becomes more interpretable and manageable when expressed in this blockwise form. By working with block variables explicitly, the MBP captures the latent separability present in many practical problems while remaining general enough to model a wide range of applications.

Each objective term $f_i$ is a smooth function with a Lipschitz continuous gradient, and the term $g_i$ is allowed to be nonsmooth but is assumed to be \emph{proximal-friendly}, meaning that its proximal operator
\begin{align}\label{eq: prox_def}
    \text{prox}_{\gamma g_i}(z) := \argmin_{x} g_i(x) + \frac{1}{2\gamma} \|x - z\|^2
\end{align}
can be computed efficiently for $\gamma >0$. This decomposition into smooth and proximal-friendly parts reflects common modeling practice. The smooth components $f_i$ typically encode losses, cost functions, or other differentiable terms, while the nonsmooth components $g_i$ often represent regularization terms or indicators that enforce domain constraints. Such composite objectives arise frequently in large-scale applications and are well suited for first-order methods. In FLiP-ADMM subproblems, the smooth parts can be handled through gradient-based updates, and the nonsmooth parts through proximal mappings. When a partial minimization of the augmented Lagrangian with respect to $\bft{x}$ or $\bft{z}$ is computationally easy, performing the exact update is preferable; otherwise, inexpensive first-order steps provide a scalable alternative. 

The constraints in \eqref{eq: multiblock_constriants} further highlight the multiblock structure. Each of the $m$ \emph{block constraints} couples only a subset of variable blocks indexed by $S_k\subseteq[n]$, through linear mappings $A^k_i$ for $i\in S_k$. The sets $S_k$ may differ across constraints, allowing the model to encode heterogeneous and localized interactions such as conservation laws, network flows, or consistency conditions. Importantly, these couplings are typically sparse: each constraint involves only a few blocks, while the total number of blocks could be larger. Throughout this paper, we assume $|S_k|\geq 2$: if some affine constraints involve only one $\bft{x}_i$, one can try to incorporate these constraints into the domain of $g_i$, or equip another $\bft{x}_j$ with the zero mapping to fake the constraints into a 2-block form. 

Expressing a problem explicitly in terms of block variables, composite objectives, and sparsely coupled block constraints, formulation~\eqref{eq: multiblock_nlp} provides an intermediate representation between high-level modeling languages and algorithm-specific reformulations. It captures the inherent sparsity and localized interactions present in large-scale applications while offering a unified starting point from which our reformulation pipeline can automatically detect, manipulate, and refine block interactions to produce ADMM-ready decompositions. We present a generalization of Example \ref{example: demo} next and further illustrate how MBP representations arise in specific applications in Section~\ref{sec: experiment}. 

\begin{example}\label{example: network_flow}
    Example \ref{example: demo} can be generalized to the classic Network Flow Problem. Consider a connected directed graph $G(\mathcal{N}, \mathcal{A})$ representing a flow network. Each node $i \in \mathcal{N}$ is associated with a parameter $b_i$, where $b_i > 0$ denotes supply and $b_i < 0$ denotes demand. Each arc $(i,j) \in \mathcal{A}$ carries a flow with transportation cost $c_{ij}$. The classic network flow problem seeks to determine the flow on each arc so as to satisfy the supply–demand balance at every node while minimizing the total transportation cost over the network. The problem can be formulated as
\begin{subequations}\label{eq: network_flow}
\begin{alignat}{2}
    \min_{\{f_{ij}\}_{(i,j)\in \mathcal{A}}} \quad & \sum_{(i,j)\in \mathcal{A}}c_{ij}( f_{ij}) \\
    \mathrm{s.t.}\quad &
    \sum_{j\in O(i)} f_{ij} - \sum_{j\in I(i)} f_{ji}= b_i,\quad && \forall i\in \mathcal{N},\\
    & 0 \leq f_{ij} \leq u_{ij}, \quad && \forall (i,j) \in \mathcal{A}, 
\end{alignat}
\end{subequations}
where $I(i)$ and $O(i)$ denote sets of nodes incoming to and outgoing from $i$, respectively, and 
$u_{ij} \geq 0$ represents the capacity of $(i,j)$. Important special cases of \eqref{eq: network_flow} include the Shortest Path Problem, the Maximum Flow Problem, the Transportation Problem, and the Assignment Problem \cite{bertsimas1997introduction}. 

The network flow problem \eqref{eq: network_flow} naturally admits a multiblock structure. Each arc flow $f_{ij}$ is treated as a block variable with a smooth transportation cost $c_{ij}(\cdot)$ and a proximal-friendly constraint $\delta_{[0,u_{ij}]}(\cdot)$. Each nodal balance equation induces an affine coupling among all flow variables incident to the corresponding node. 
In fact, this structure is more clearly revealed on the \textit{dual hypergraph} of the network, where each node $v(i,j)$ corresponds to an arc $(i,j)\in \mathcal{A}$, and each hyper-edge $h(i)$ is the set of nodes corresponding to edges connected to $i\in \mathcal{N}$, i.e., $h(i) = \{v(i,j)\}_{j\in O(i)}\sqcup \{v(j,i)\}_{j\in I(i)}$. This resulting dual hypergraph inherently aligns with our MBP formulations: nodes represent block variables and hyper-edges correspond to nodal balance constraints.
\end{example}
\begin{remark}
While the MBP formulation is flexible enough to encode nonconvex and even mixed-integer structures, we do not consider these extensions in this work. Our attention is restricted to convex problems, smooth and proximal-friendly block objectives, and constraints described by linear mappings, the regime most compatible with first-order ADMM.
\end{remark}

\subsection{Graph-based Transformation}\label{sec: graph_transformuation}

Many optimization problems arise in multiblock form where individual constraints involve more than
two variable blocks. Although such formulations are flexible, a direct application of ADMM to a
general multiblock structure may exhibit slow convergence or may not converge at all. To leverage
the robust theoretical guarantees of the two-block ADMM framework, our pipeline automatically
transforms a multiblock problem into a bipartite form in which the final two blocks each consist of
one or more original sub-blocks.

To this end, we first construct a graph representation of the problem, which
serves as the foundation for this transformation. We associate a graph to the MBP \eqref{eq: multiblock_nlp} that captures the coupling pattern among block variables and block constraints. The construction is summarized in
Algorithm~\ref{alg:graph-construction}. To avoid notational mismatch, we use $\bft{x}_i$ to label vertices as well. 
Algorithm~\ref{alg:graph-construction} can be viewed as first forming a hypergraph, where nodes
represent block variables and hyper-edges represent block constraints, and then performing a \textit{star
expansion}\footnote{The operation that replaces each hyper-edge by a new auxiliary node connected to all vertices in that hyper-edge.} on every hyper-edge involving more than two nodes. Although a hypergraph is a more compact
representation of the MBP, we deliberately replace each hyper-edge by a constraint node connected to its incident variable nodes in order to facilitate subsequent transformations and computations. 

\begin{algorithm}[htbp]
\caption{Graph Construction for the Multiblock Problem}
\label{alg:graph-construction}
\begin{algorithmic}[1]
\State \textbf{Input:} A multiblock problem \eqref{eq: multiblock_nlp}
\State initialize an empty graph $G = (V,E)$;
\For{each block variable $\bft{x}_i$}
    \State add a node (with label) $\bft{x}_i$ to $V$;
\EndFor
\For{each block constraint $k\in [m]$ with involved block variables set $S_k$}
    \If{$|S_k| = 2$}
        \State suppose $S_k = \{i,j\}$ and add an undirected edge $(\bft{x}_i, \bft{x}_j)$ to $E$;
    \Else
        \State introduce a new constraint node $C_k$ and add it to $V$;
        \For{each $i \in S_k$}
            \State add an edge $(C_k, \bft{x}_i)$ to $E$;
        \EndFor
    \EndIf
\EndFor
\State\textbf{return} $G$.
\end{algorithmic}
\end{algorithm}


\begin{example}\label{eq: example}
Consider a slight generalization of Example \ref{example: demo}, where the first block constraint ($C_1$) couples 3 block variables, together with its transformed graph, as shown in Fig. \ref{fig: graph}. Blue nodes represent original block variables, and orange node $C_1$ represents the first block constraint that couples all three block variables. 
Edges $(\bft{x}_1, \bft{x}_2)$ and $(\bft{x}_2, \bft{x}_3)$ represent the second and third constraints. 
\begin{figure}[ht]
\begin{minipage}{0.5\textwidth}
\begin{align*}
\min_{(\bft{x}_1,\bft{x}_2,\bft{x}_3)} \quad 
    & f_1(\bft{x}_1) + f_2(\bft{x}_2) + f_3(\bft{x}_3) \\
\mathrm{s.t.}\quad 
    & 
    \begin{bmatrix}
        A_1 & A_2 & A_3 \\
        B_1 & B_2 & \\
            & C_2 & C_3
    \end{bmatrix}
    \begin{bmatrix}
        \bft{x}_1 \\ \bft{x}_2 \\ \bft{x}_3
    \end{bmatrix}
    =
    \begin{bmatrix}
    a \\ b \\ c
    \end{bmatrix}
    \quad 
    \begin{matrix}
    \leftarrow C_1 \\ \text{} \\ \text{}
    \end{matrix}
\end{align*}   
\end{minipage}
\hfill
\begin{minipage}{0.4\textwidth}
\centering
\begin{tikzpicture}[
    var/.style={circle, draw, fill=cyan!30, minimum size=18pt},
    constr/.style={circle, draw, fill=orange!70, minimum size=18pt}
]
\node[var] (v1) at (0,0.0) {$\bft{x}_1$};
\node[var] (v2) at (1.5,0) {$\bft{x}_2$};
\node[var] (v3) at (0,-1.5) {$\bft{x}_3$};
\node[constr] (u1) at (1.5,-1.5) {$C_1$};
\draw (v1) -- (v2);
\draw (v1) -- (u1);
\draw (v2) -- (v3);
\draw (v2) -- (u1);
\draw (v3) -- (u1);
\end{tikzpicture}
\end{minipage}
\caption{A 3-block example and its graph representation.}\label{fig: graph}
\end{figure}
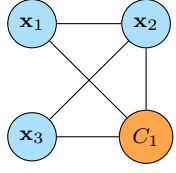
The graph representation can be understood algebraically as follows. We introduce $\bft{y} = (\bft{y}_1, \bft{y}_2, \bft{y_3})$ as the nodal variable for the newly introduced vertex $C_1$, and replace the constraint $A_1\bft{x}_1 + A_2\bft{x}_2+A_3\bft{x}_3 = a$ by
\begin{subequations}\label{eq: substitution}
    \begin{align}
    & A_i\bft{x_i} - \bft{y}_i = \bft{0}, ~\forall i\in [3],\\
    & \bft{y} = (\bft{y}_1, \bft{y}_2, \bft{y}_3)  \in Y:=\{(\bft{y}_1, \bft{y}_2, \bft{y}_3)~|~ \bft{y}_1 + \bft{y}_2 + \bft{y}_3 = a\}.
\end{align}
\end{subequations}
The variable $\bft{y}$ is assigned a proximal-friendly objective component $\delta_Y$, whose proximal operation is the projection onto $Y$ and can be computed in a closed form. Other ways of rewriting this constraint are possible, as long as each newly introduced constraint couples only two block variables (might include an auxiliary variable block). 
\end{example}

As shown in Example \ref{eq: example}, upon termination of this transformation stage, the original MBP \eqref{eq: multiblock_nlp} can be equivalently written as the following graph-based reformulation: 
\begin{subequations}\label{eq: graph_problem_formulation}
\begin{align}
    \min_{\{\bft{x}_i\}_{i\in V}} \quad & \sum_{i\in V} f_i(\bft{x}_i) + g_i(\bft{x}_i) \\
    \mathrm{s.t.} \quad & Q^e_i\bft{x}_i + Q^e_j\bft{x}_j = b^e,~\forall e = (i,j)\in E, \label{eq: graph_problem_formulation_constr}
\end{align}
\end{subequations}
where $G(V,E)$ is the transformed graph. We write $\bft{x}_i$ for the variable associated with each node $i\in V$, and we use each edge $e=(i,j)\in E$ to denote the coupling constraint generated from the block constraints involving blocks $i$ and $j$ with corresponding mappings $(Q^e_i,Q^e_j)$ (either some $A^k_i$ from \eqref{eq: multiblock_constriants} or $\pm\mathrm{Id}$) together with a right-hand side array $b^e$. In this representation, the entire
algebraic structure of the problem is encoded by the graph: vertices collect local
variables and local objective terms, while edges encode all pairwise interactions arising from the
original constraint sets. This graph-based reformulation is therefore an intermediate representation of the MBP, one in which the sparsity pattern and coupling structure are made explicit. However, formulation \eqref{eq: graph_problem_formulation} is not necessarily ready for ADMM: if $G$ is not bipartite, then one cannot divide block variables into two groups and perform parallel block updates alternately.  We formalize this observation in the next proposition. 

\begin{proposition}\label{prop: coupled_subproblem}
Consider the graph-based formulation \eqref{eq: graph_problem_formulation}
defined on a graph $G(V,E)$. 
There exists a partition of variable blocks into two disjoint groups $V = \texttt{L}\sqcup \texttt{R}$
such that every coupling block constraint \eqref{eq: graph_problem_formulation_constr} involves exactly one variable from each
group if and only if the graph
$G$ is bipartite.
\end{proposition}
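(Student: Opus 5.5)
The plan is to observe that the statement is a direct translation of the definition of bipartiteness into the language of formulation \eqref{eq: graph_problem_formulation}. The key step is a dictionary between the algebraic and graph-theoretic objects. By construction, the variable blocks of \eqref{eq: graph_problem_formulation} are indexed by the vertex set $V$. Each coupling constraint \eqref{eq: graph_problem_formulation_constr} is indexed by an edge $e=(i,j)\in E$ and involves exactly the two blocks $\bft{x}_i$ and $\bft{x}_j$ attached to its endpoints. Under this dictionary, a partition of the blocks $V=\texttt{L}\sqcup\texttt{R}$ is exactly a partition of the vertex set. The requirement that every constraint involves exactly one variable from each group becomes the requirement that every edge has one endpoint in $\texttt{L}$ and one in $\texttt{R}$.

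For the direction ``$\Rightarrow$'', suppose such a partition exists. Every edge $e=(i,j)$ then has $i\in\texttt{L}$, $j\in\texttt{R}$ or vice versa. So no edge joins two vertices within the same part, and $G$ is bipartite with bipartition $(\texttt{L},\texttt{R})$.

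For the direction ``$\Leftarrow$'', if $G$ is bipartite, take any bipartition $V=\texttt{L}\sqcup\texttt{R}$ of its vertex set, placing isolated vertices arbitrarily. Each edge then crosses the partition, so each constraint $Q^e_i\bft{x}_i+Q^e_j\bft{x}_j=b^e$ involves one block from $\texttt{L}$ and one from $\texttt{R}$. As a remark, one can add that stacking the constraints then gives the two-block form \eqref{eq: two_block_problem} with $\bft{x}=(\bft{x}_i)_{i\in\texttt{L}}$ and $\bft{z}=(\bft{x}_j)_{j\in\texttt{R}}$. In this form each row block of $\bft{A}$ and of $\bft{B}$ has a single nonzero block, and the objective separates over the vertices in each group. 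This is the parallel structure the proposition is meant to justify.

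The only point needing care is the phrase ``exactly one variable from each group''. This relies on every constraint in \eqref{eq: graph_problem_formulation_constr} involving exactly two distinct blocks, i.e., the graph has no self-loops. That holds because Algorithm~\ref{alg:graph-construction} only creates edges from constraints with $|S_k|=2$ (two distinct indices) or edges between a new constraint node $C_k$ and an original block. I will state this explicitly. If self-loops were allowed, a loop would be an odd cycle, $G$ would not be bipartite, and the loop's constraint could not meet both groups either, so the equivalence would still hold. I do not expect any real obstacle beyond this bookkeeping.
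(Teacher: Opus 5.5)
Your proposal is correct and follows essentially the same route as the paper: both identify the required partition $V=\texttt{L}\sqcup\texttt{R}$ with a 2-coloring of $G$, which exists if and only if $G$ is bipartite. Your extra observation that Algorithm~\ref{alg:graph-construction} creates no self-loops, so every constraint involves two distinct blocks, is sound bookkeeping that the paper leaves implicit.
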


\begin{proof}
A bipartition assigns each node of $G$ to one of two sets \texttt{L} and  \texttt{R}
so that every edge connects nodes in different sets. This is precisely a
2-coloring of $G$, which exists if and only if $G$ is bipartite.
\end{proof}
When $G$ is not bipartite, some odd cycle contains nodes residing on the same side when vertices are partitioned into two groups, forcing updates of their corresponding variables to remain coupled in ADMM. In order to obtain a formulation amenable to parallel two-block ADMM, we must further transform the graph so
that it becomes bipartite. 

\subsection{Bipartization via Edge Subdivision}\label{sec: bipartization}
The coupling graph constructed in the previous subsection is, in general, not bipartite. In particular,
the presence of odd cycles prevents a clean two-way separation of nodes, which is required for
producing a canonical two-block formulation suitable for ADMM. To obtain such a structure, we apply a classic graph-based transformation known as \emph{edge subdivision}, which replaces an edge by a path of length 2. In particular, subdividing at least one edge in every odd cycle converts the cycle into an even cycle. Repeating this operation sufficiently many times yields a bipartite graph.

\addtocounter{example}{-1}
\begin{example}[Continued: Bipartization]
The graph in Fig. \ref{fig: graph} contains odd cycles (e.g., $\bft{x}_1 - \bft{x_2} -C_1-\bft{x}_1$) and hence is not bipartite. According to Proposition \ref{prop: coupled_subproblem}, the algebraic model encoded by this graph is not ADMM-ready, no matter how block variables are grouped together. In order to run parallel ADMM, we can replace edge $(\bft{x}_2, C_1)$ by two edges $(\bft{x}_2, \bft{w})$ and $(\bft{w}, C_1)$ with a new node $\bft{w}$. The resulting graph is clearly bipartite, as shown in Fig. \ref{fig: bipartite}. 

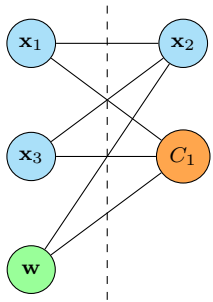
\begin{figure}[ht]
\centering
\begin{tikzpicture}[
    var/.style={circle, draw, fill=cyan!30, minimum size=18pt},
    constr/.style={circle, draw, fill=orange!70, minimum size=18pt}, 
    aux/.style={circle, draw, fill=green!40, minimum size=18pt}
]
\node[var] (v1) at (0,0.0) {$\bft{x}_1$};
\node[var] (v2) at (2.0,0) {$\bft{x}_2$};
\node[var] (v3) at (0,-1.5) {$\bft{x}_3$};
\node[constr] (u1) at (2.0,-1.5) {$C_1$};
\node[aux] (w) at (0, -3.0) {$\bft{w}$};
\draw (v1) -- (v2);
\draw (v1) -- (u1);
\draw (v2) -- (v3);
\draw (v3) -- (u1);
\draw (w) -- (v2); 
\draw (w) -- (u1);
\draw[dashed] (1,0.5) -- (1,-3.5);
\end{tikzpicture}
\caption{Bipartite graph obtained by subdividing $(\bft{x}_2, C_1)$ in Fig. \ref{fig: graph}.}\label{fig: bipartite}
\end{figure}
From an algebraic perspective, the original problem in Example \ref{eq: example} can be further reformulated as 
\begin{subequations}\label{eq: two_block_reformulation}
\begin{align}
    \min_{\stackrel{(\bft{x}_1, \bft{x}_3, \bft{w})}{(\bft{y}_1, \bft{y}_2,\bft{y}_3, \bft{x}_2)}} \quad & \Big(f_1(\bft{x}_1) + f_3(\bft{x}_3) \Big) + \Big( \delta_Y(\bft{y}) + f_2(\bft{x_2})\Big) \\
    \mathrm{s.t.}\quad & 
    \underbrace{
    \begin{bmatrix}
        A_1 &     & \\
        B_1 &     & \\
            & A_3 &  \\
            & C_3 &  \\
            &     & -\mathrm{Id} \\
            &     & -\mathrm{Id} 
    \end{bmatrix}
    }_{\bft{A}}
    \begin{bmatrix}
        \bft{x}_1 \\ \bft{x}_3 \\ \bft{w}
    \end{bmatrix}
    + 
    \underbrace{
    \begin{bmatrix}
        -\mathrm{Id} & & & \\
                     & & & B_2 \\
            &&-\mathrm{Id}& \\ 
            &&& C_2 \\
            &&& A_2 \\
            &\mathrm{Id}&&
    \end{bmatrix}
    }_{\bft{B}}
    \begin{bmatrix}
        \bft{y}_1 \\ \bft{y}_2 \\ \bft{y}_3 \\ \bft{x}_2
    \end{bmatrix}
    = 
    \underbrace{
    \begin{bmatrix}
        \bft{0} \\ b \\ \bft{0} \\ c \\ \bft{0} \\ \bft{0}
    \end{bmatrix}
    }_{\bft{b}}.\label{eq: two_block_reformulation_constraint}
\end{align}
\end{subequations}
Here we reuse $\bft{w}$ for the variable of the newly introduced node: we replace $A_2\bft{x}_2 - \bft{y}_2 = 0$ by 
\begin{align}\label{eq: substitute_2block_edge}
    A_2\bft{x}_2 - \bft{w} = 0,~ \bft{y}_2 - \bft{w} = 0.
\end{align}
Clearly, the reformulated problem \eqref{eq: two_block_reformulation} is now in the canonical
two-block form required by ADMM. All variables on the left partition $(\bft{x}_1,\bft{x}_3,\bft{w})$ appear only through the linear operator $\bft{A}$, which is
block-diagonal. Likewise, the variables on the right partition $(\bft{y}_1,\bft{y}_2,\bft{y}_3,\bft{x}_2)$ interact only through the operator $\bft{B}$, which can
be permuted into a block-diagonal structure as well.
\end{example}

Denote by $\widehat{G} = (\widehat{V}, \widehat{E})$ the bipartite graph obtained after the
subdivision procedure, with partitions $\widehat{V} = \texttt{L} \sqcup \texttt{R}$ and edges
$\widehat{E} \subseteq \texttt{L} \times \texttt{R}$. This graph encodes the optimization problem
\begin{subequations}\label{eq: bipartite_formulation}
\begin{align}
    \min_{\stackrel{\{\bft{x}_i\}_{i\in \texttt{L}}}{\{\bft{z}_j\}_{j\in \texttt{R}}}} \quad 
        & \Bigg(\sum_{i\in \texttt{L}} f_i(\bft{x}_i) + g_i(\bft{x}_i)\Bigg)
        +
        \Bigg(\sum_{j\in \texttt{R}} f_j(\bft{z}_j) + g_j(\bft{z}_j)\Bigg), \\
    \mathrm{s.t.}\quad 
        & \bft{A}_i^e\,\bft{x}_i + \bft{B}_j^e\,\bft{z}_j = \bft{b}^e, ~\forall\, e=(i,j)\in \widehat{E}. 
        \label{eq: bipartite_formulation_constraint}
\end{align}
\end{subequations}
This formulation is obtained by applying the graph construction and subdivision steps to the
original MBP~\eqref{eq: multiblock_nlp}, resulting in the intermediate representation
\eqref{eq: graph_problem_formulation} and ultimately in the bipartite structure
\eqref{eq: bipartite_formulation}. Once the graph is bipartite, all variables on the $\texttt{L}$(eft)-side
$\{\bft{x}_i\}_{i\in \texttt{L}}$ are coupled only with variables on the $\texttt{R}$(ight)-side $\{\bft{z}_j\}_{j\in \texttt{R}}$,
and never within their own side. Consequently, both blocks admit fully parallel updates in ADMM,
with each subproblem depending only on local terms and its incident edges. The cross-partition
constraints~\eqref{eq: bipartite_formulation_constraint} are handled entirely through the augmented
Lagrangian, while the primal updates remain separable across all nodes within each partition. Thus, the bipartization step converts the MBP into a canonical two-block form in which ADMM naturally decomposes into parallelizable subproblems and lightweight linear-algebra operations, ensuring scalability and structural compatibility with modern first-order methods.

It is important to emphasize that the bipartite representation obtained through edge subdivision is
not unique. Different choices of edges to subdivide correspond to different ways of breaking odd
cycles, and these choices depend on the order in which the graph is explored. Distinct traversal
strategies, such as breadth-first search (BFS) or depth-first search (DFS), may therefore yield
partitions with different numbers of subdivision nodes and different adjacency patterns. In the
next three subsections, we present a BFS-based bipartization algorithm, a MILP formulation
that seeks an optimal bipartition under criteria tailored to ADMM convergence performance, and a learning-based GNN surrogate that aims to emulate optimal MILP decisions.

\subsubsection{BFS-based Bipartization Algorithm}
We present a BFS-based bipartization procedure in Algorithm~\ref{alg:BFS-bipartization}, which traverses the graph in the breadth-first order and splits edges whenever a parity conflict is detected. 

\begin{algorithm}[ht]
\caption{BFS-based Bipartization Algorithm}
\label{alg:BFS-bipartization}
\begin{algorithmic}[1]
\State \textbf{Input:} undirected graph $G=(V,E)$;
\State initialize node partition $c:V\to\{-1, 0,1\}$ and edge decisions $\sigma:E\to\{ 0,1\}\times\{0,1\}$ with $c(v) =-1$ and $\sigma(e)=(0,0)$ for all $v\in V$ and all $e\in E$;
\State construct neighbor lists $N(v)$ for all $v\in V$, set $s\gets 0$; 
\For{each $v\in V$}
    \If{$c(v)= -1$} \Comment new connected component
        \State $c(v) \gets s$;
        \State initialize queue $Q \gets \{v\}$;
        \While{$Q$ is not empty}
            \State remove front node $u$ from $Q$;
            \State $p \gets c(u)$;
            \For{each edge $e = \{u,w\} \in E$ incident to $u$}
                \If{$c(w)=-1$}
                    \State $c(w) \gets 1-p$; \Comment assign opposite color to a new neighbor
                    \State insert $w$ into $Q$;
                \ElsIf{$c(w) = p$ and $\sigma(e) = (0,0)$} \Comment new color conflict detected
                    \State $\sigma(e) \gets (1, 1-p)$ ;\Comment split, assign to side $1-p$
                \EndIf
            \EndFor
        \EndWhile
        \State $s \gets 1-s$; \Comment alternate starting partition for next component
    \EndIf
\EndFor
\State \textbf{Return:} $(c, \sigma)$.
\end{algorithmic}
\end{algorithm}
The mappings $c$ and $\sigma$ encode sufficient information to construct the bipartite graph $\hat{G}(I\sqcup J, \widehat{E})$ using the following procedures in Algorithm \ref{alg:bipartite construction}. In principle, $c(v)$ assigns each vertex $v\in V$ to either the left partition ($c(v)=0$) or the right partition ($c(v)=1$). For each edge $e\in E$, the first component of $\sigma(e)$ indicates if the edge should be subdivided; if yes ($\sigma(e)[1]=1$), then the second component indicates the assignment of the newly introduced node. As we will see next, Algorithm \ref{alg:bipartite construction} is also used to construct bipartite graphs from outputs of MILP and GNN. 

\begin{algorithm}[ht]
\caption{Bipartite Graph Construction Procedures}
\label{alg:bipartite construction}
\begin{algorithmic}[1]
\State \textbf{Input:} undirected graph $G=(V,E)$, vertex mapping $c$, and edge mapping $\sigma$;
\State initialize empty $\texttt{L}$, $\texttt{R}$, and $\widehat{E}$;
\For{each $v\in V$}
    \If{$c(v)=0$}
        \State append $v$ to $\texttt{L}$; \Comment{assign to left partition}
    \Else 
        \State append $v$ to $\texttt{R}$; \Comment{assign to right partition}
    \EndIf
\EndFor
\For{each $e\in E$}
    \If{$\sigma(e)[1] = 1$}
        \State introduce a new node $w$; \Comment{subdivide this edge with a new node}
        \If{$\sigma(e)[2] = 0$} 
            \State append $w$ to $\texttt{L}$; \Comment{assign the new node to left partition}
        \Else 
            \State append $w$ to $\texttt{R}$; \Comment{assign the new node to right partition}
        \EndIf 
    \Else 
        \State append $e$ to $\widehat{E}$;  \Comment subdivision not required
    \EndIf
\EndFor

\State \textbf{Return:} $\widehat{G}(\widehat{V} := \texttt{L} \sqcup\texttt{R}, \widehat{E})$.
\end{algorithmic}
\end{algorithm}

\subsubsection{An MILP Formulation for Optimal Bipartization}
In this subsection, we present a different approach to convert $G$ to $\widehat{G}$ based on a mixed-integer linear program (MILP). The goal is to
partition the nodes of the coupling graph into two groups and decide which edges to subdivide so as
to obtain a bipartite graph, while simultaneously controlling certain formulation-specific quantities
that influence the convergence rate of ADMM, such as the norm of $\|\bft{A}\|$ or $\|\bft{B}\|$ in the final
two-block formulation. We assume $\bft{A}$ and $\bft{B}$ are matrices in this section, while the underlying idea extends to general linear mappings. 

\paragraph{Variables and constraints.}
The purpose of the bipartization step is to convert the original graph $G(V,E)$ into a
bipartite graph $\widehat{G}(\widehat{V},\widehat{E})$ by allowing certain edges to be subdivided. We introduce the following binary variables.
\begin{itemize}
    \item For each node $i \in V$, introduce two binary variables $(x_i^\texttt{L}, x_i^\texttt{R}) \in \{0,1\}^2$. Node $i$
    is assigned to \texttt{L} (respectively \texttt{R}) if and only if $x_i^\texttt{L} = 1$ (respectively $x_i^\texttt{R} = 1$). Each node
    must belong to exactly one side, so we impose
    \begin{align}\label{constr: node}
        x_i^\texttt{L} + x_i^\texttt{R} = 1, \quad \forall i\in V.
    \end{align}

    \item For each edge $e=(i,j)\in E$, introduce three binary variables
    $(z_e, x_e^\texttt{L}, x_e^\texttt{R}) \in \{0,1\}^3$. The variable $z_e$ indicates whether edge $e$ is subdivided:
    $z_e = 0$ means that $e$ is kept as is, while $z_e = 1$ means that $e$ is subdivided by
    introducing a new node. If $z_e = 0$, the edge is not split and we set $x_e^\texttt{L} = x_e^\texttt{R} = 0$. In
    that case the endpoints cannot lie on the same side of the bipartition, so they must satisfy
    $x_i^\texttt{L} + x_j^\texttt{L} = 1$ (equivalently, $x_i^\texttt{R} + x_j^\texttt{R} = 1$).

    When $z_e = 1$, we introduce a new node associated with edge $e$, add edges $(i,e)$ and $(j,e)$, and delete the edge $(i,j)$. Here we continue to use $e$ as the new node to avoid additional notations.  The new node must be assigned to either \texttt{L} or \texttt{R} via $x_e^\texttt{L}$ and $x_e^\texttt{R}$, and it cannot be placed on the same side as $i$ or $j$.
    These logical requirements can be encoded by the following linear constraints: for all
    $e=(i,j)\in E$,
    \begin{subequations}\label{constr: edge}
        \begin{align}
            & x_e^\texttt{L} + x_e^\texttt{R} = z_e, \\
            &  1 - z_e \leq x_i^\texttt{L} + x_j^\texttt{L} \leq 1 + z_e,\\
            & z_e \leq x_i^\texttt{L} + x_e^\texttt{L} \leq 2 - z_e, \\
            & z_e \leq x_j^\texttt{L} + x_e^\texttt{L} \leq 2 - z_e.
        \end{align}
    \end{subequations}
    When $z_e = 0$, the second inequality enforces $x_i^\texttt{L} + x_j^\texttt{L} = 1$, that is, $i$ and $j$ lie on
    opposite sides and no subdivision node is used. When $z_e = 1$, the last two inequalities ensure
    that the subdivision node and each endpoint cannot both be assigned to L, and by symmetry, the
    same holds for R, thereby avoiding odd cycles through the split edge.
\end{itemize}
As a consequence, any feasible solution to \eqref{constr: node} and \eqref{constr: edge} naturally provides valid mappings $c(i) = x_i^\texttt{R}$ for $i\in V$ and $\sigma(e) = (z_e, x^\texttt{R}_e)$ for $e\in E$, which can be used to construct the bipartite graph $\widehat{G}$ using Algorithm \ref{alg:bipartite construction}. 

\paragraph{Objective function.}
The convergence behavior of ADMM depends not only on the abstract bipartite structure, but also on
quantitative properties of the resulting linear operators, such as the norms of the matrices $\bft{A}$ and $\bft{B}$
in the coupling constraint $\bft{A}\bft{x} + \bft{B}\bft{z} = \bft{b}$. These norms frequently appear in iteration-complexity bounds and influence how quickly the dual residuals of ADMM iterates approach zero. Empirically speaking, it is known that ADMM converges faster with fewer coupling constraints. It is therefore
natural to incorporate such quantities into the MILP objective, for example by minimizing
$\|\bft{A}\|$ or $\|\bft{B}\|$ subject to the constraints above.

Due to the construction of the bipartite graph, the final matrices $\bft{A}$ and $\bft{B}$ can be
permuted into block-diagonal form, since no variables within the same partition are coupled. Without
loss of generality, we assume that $\bft{A}$ has already been permuted into such a form as shown in \eqref{eq: two_block_reformulation_constraint} of the illustrative example. We consider $\|\bft{A}\|$ in our derivation; this entails no restriction, as matrix norms are invariant under permutations. 

Suppose diagonal blocks of $\bft{A}$ are $\bft{A}_i$'s for $i\in \texttt{L}$, i.e., $\bft{A}_i$ is the vertical concatenation of $\bft{A}^e_i$ over edges $e\in \widehat{E}$ where $i$ is an endpoint. It holds that $\|\bft{A}\| = \max_{i\in \texttt{L}} \{\|\bft{A}_i\|\}$. The norm $\|\bft{A}\|$ can be effectively controlled using additional continuous and integer variables. Suppose the nodes assigned to \texttt{L} correspond to the $\bft{x}$-block in view of \eqref{eq: two_block_problem}, and we wish to minimize $\|\bft{A}\|$. For each node $i \in \texttt{L}$, there are two cases.
\begin{enumerate}
    \item Node $i$ is a variable node from $V$, then in view of \eqref{eq: graph_problem_formulation}, the corresponding contribution of this node to $\|\bft{A} \|$, namely $\|\bft{A}_i\|$, can be computed as
    \begin{align}\label{eq: contribution}
        \text{contribution}(i):= \sqrt{\lambda_{\max} \left(\sum_{e \in E_i}  (Q^e_i)^\top Q^e_i \right)},
    \end{align}
    where $E_i$ is the set of edges in $E$ connected to $i$. E.g., $\bft{x}_1$ and $\bft{x}_3$ in \eqref{eq: two_block_reformulation_constraint}. 
    \item Node $i$ is a newly introduced node for a subdivided edge, then the contribution of this node to $\|\bft{A}_i \|$ is exactly $\sqrt{2}$. E.g., $\bft{w}$ in \eqref{eq: substitute_2block_edge}. 
\end{enumerate}
With an auxiliary scalar variable $t^{\texttt{L}}$, we can enforce the following constraints
\begin{subequations}\label{eq: contr: contribution}
\begin{align}
    t^{\texttt{L}}& \geq \text{contribution}(i) x_i^\texttt{L}, ~\forall i\in V, \\
    t^{\texttt{L}}& \geq  \sqrt{2} x^\texttt{L}_e, ~\forall e \in E. 
\end{align}    
\end{subequations}
Consequently, the following MILP
\begin{align}\label{eq: milp}
    \min \{ t^{\texttt{L}}| \eqref{constr: node}, \eqref{constr: edge},\eqref{eq: contr: contribution},~t^{\texttt{L}}\geq0,~& (x_i^\texttt{L}, x^\texttt{R}_i)\in \{0,1\}^2,~\forall i\in V, \notag \\
    & (z_e, x_e^\texttt{L}, x_e^\texttt{R}) \in \{0,1\}^3,~\forall e\in E\}
\end{align}
provides a valid bipartization for $G$ and at the same time effectively minimizes $\|\bft{A}\|$ in the final ADMM-ready formulation \eqref{eq: two_block_problem}. 


\begin{remark}
    We remark on some practical considerations. 
\begin{itemize}
\item \textbf{Surrogate for Block Contribution.} When the block contribution \eqref{eq: contribution} is hard to compute, we can use some surrogate metrics that are easier to compute. For example, since 
\begin{align}\label{eq: surrogate}
   \mathrm{contribution}(i) \leq  \sqrt{\sum_{e\in E_i} \|Q^e_i\|^2} \leq \sqrt{\sum_{e\in E_i} \|Q^e_i\|_F^2},
\end{align}
where $\|\cdot\|_F$ denotes the Frobenius norm, either of these two quantities can be used in \eqref{eq: contr: contribution}, and \eqref{eq: milp} still minimizes a valid upper bound of $\|\bft{A}_i\|$.

\item \textbf{Balanced Partitions.} We can enforce additional (soft) constraints so that the resulting bipartite graph is balanced:
\begin{align}\label{eq: node_approx_equai}
    \sum_{i\in V} x_i^\texttt{L}  + \sum_{e\in E} x_e^\texttt{L} \approx \sum_{i\in V} x_i^\texttt{R}  + \sum_{e \in E} x_e^\texttt{R} \approx \text{number of available cores.}
\end{align} 
Similarly, suppose the algebraic substitution for edge subdivision takes the form of \eqref{eq: substitution} and \eqref{eq: substitute_2block_edge}. The total numbers of columns and non-zeros entries in \texttt{L} and \texttt{R} can be designed to be approximately equal as follows:
\begin{align}
    & \sum_{i\in V} \mathrm{dim}(i) x_i^\texttt{L}   + \sum_{e\in E} (2\mathrm{dim}(e)) x_e^\texttt{L} \notag \\
    \approx  & \sum_{i\in V} \mathrm{dim}(i) x_i^\texttt{R}  + \sum_{e \in E} (2\mathrm{dim}(e))x_e^\texttt{R},\\
    & \sum_{i\in V} \left(\sum_{e\in E_i} \mathrm{nnz}(Q^e_i)\right)x_i^\texttt{L}   + \sum_{e\in E} (2\mathrm{dim}(e)) x_e^\texttt{L} \notag \\
    \approx & \sum_{i\in V} \left(\sum_{e\in E_i} \mathrm{nnz}(Q^e_i)\right)x_i^\texttt{R}   + \sum_{e\in E} (2\mathrm{dim}(e)) x_e^\texttt{R}, 
\end{align} 
where $\mathrm{dim}(i)$, $\mathrm{dim}(e)$, and $\mathrm{nnz}(Q^e_i)$ denote the dimensions of $\bft{x}_i$, $b^e$, and the number of non-zero entries of $Q^e_i$ in \eqref{eq: graph_problem_formulation_constr}, respectively.

The size of each nodal subproblem is not modeled in MILP during bipartization, because it is largely determined by the input MBP \eqref{eq: multiblock_nlp}, where scalar variables have already been grouped into blocks based on their couplings in the objective components. This aspect is highly application-dependent and related to detection of gradient and proximal operators. As a result, we leave this issue for future investigation on specific applications.

\item \textbf{Objective Components.} The objective function offers several degrees of freedom and can be tailored to a specific ADMM variant. For example, in addition to $\|\bft{A}\|$ and $\|\bft{B}\|$, one can add terms in \eqref{eq: node_approx_equai} in the objective function to minimize the total number of subproblems. Such terms promote MILP to find balanced partitions with fewer crossing edges. One can also incorporate bounds on Lipschitz constants of the smooth terms $\sum_{i\in \texttt{L}}f_i$ and $\sum_{i\in \texttt{R}} f_i$. Suppose for each $i\in V$, $f_i$ has a $L_i$-Lipschitz gradient. Then in the final formulation \eqref{eq: bipartite_formulation}, terms $\max_{i\in V} L_ix_i^\texttt{L}$ and $\max_{i\in V} L_ix_i^\texttt{R}$ are upper bounds of Lipschitz constants of the smooth components of the left and right partitions, respectively. These quantities can therefore influence how the final partition balances smoothness and coupling complication for faster convergence. 

\item \textbf{Computation of MILP.} The proposed MILP introduces $2|V| + 3|E|$ binary variables, and can therefore become challenging to solve when the number of block variables or block constraints is large. As a result, the bipartization approach is best suited to problems with a small number of dense blocks that are only loosely coupled. When solving the MILP  becomes a computational bottleneck, one can impose a time limit or optimality gap; any feasible solution already yields a valid bipartization. As shown in Section~\ref{sec: consensus_opt}, even a loose gap of 20\% is sufficient to produce a high-quality reformulation within seconds.
\end{itemize}
\end{remark}

\subsubsection{GNN-based Bipartization}
While MILP-based bipartization provides a theoretically grounded approach, its computational cost becomes prohibitive for large-scale graphs due to the superlinear growth in solving time with respect to graph size. To address this scalability challenge, we employ a Graph Neural Network (GNN) as a surrogate model that aims to emulate the decision patterns of the MILP solver. The GNN is trained to predict the bipartization decisions (i.e., node assignments) that the MILP would produce, thereby approximating the solution quality at a fraction of the computational time. This approach aligns with a growing body of literature that successfully employs GNNs to approximate combinatorial optimization solvers~\cite{chen2023on,NEURIPS2019_d14c2267,nair2021solvingmixedintegerprograms}, leveraging their ability to capture graph-structured dependencies and generalize across instances.

In this subsection, we briefly describe our approach to modeling a Graph Neural Network on the graph and define its learning objective. Specifically, after obtaining the graph representation $G(V,E)$ of the problem via Algorithm~\ref{alg:graph-construction}, we construct a Graph Convolutional Network (GCN) based on $G$. In the following, we describe how to derive neural network features from the graph and formulate the training objective. Further implementation details, including architectural specifications and training procedures, are provided in Appendix~\ref{section: gnn-details}.

\begin{figure}[htbp]\label{fig:Gnn-Pipeline}
\centering
\begin{tikzpicture}[
    node distance=0.3cm, 
    auto,
    box/.style={rectangle, draw=black, thick, fill=red!5, 
                minimum width=1.0cm, minimum height=1cm, align=center,
                rounded corners=4pt},
    box_1/.style = {rectangle, draw=black, thick, fill=green!5, 
                minimum width=1.0cm, minimum height=1cm, align=center,
                rounded corners=4pt},
    box_2/.style = {rectangle, draw=black, thick, fill=cyan!5, 
                minimum width=1.0cm, minimum height=1cm, align=center,
                rounded corners=4pt}
]

\node[box] (start) {MultiBlock \\Problem};
\node[box_1, right=of start] (process1) {Graph \\Representation};
\node[box_1, right=of process1] (right-mid) {Node Feature\\ $H^{(0)}$};
\node[box_1, above=of right-mid] (right-top) {Graph Topology\\$G=(V,E)$};
\node[box_1, below=of right-mid] (right-bottom) {Edge Feature\\$F^e$};
\node[box_1, right=of right-mid] (process2) {GNN};
\node[box_2, right=of process2] (decision) {Node \\Assignment};
\node[box_2, right=of decision] (end) {Bipartite \\Graph};

\draw[->, thick] (start) -- (process1);
\draw[->, thick] (process1) -- (right-mid);
\draw[->, thick] (process1.north) .. controls +(up:0.7cm) and +(left:1cm) 
    .. node[left] {} (right-top.west);
\draw[->, thick] (process1.south) .. controls +(down:0.7cm) and +(left:1cm) 
    .. node[left] {} (right-bottom.west);           
\draw[->, thick] (right-top.east) .. controls +(right:0.7cm) and +(up:0.7cm) 
    .. node[] {} (process2.north);
\draw[->, thick] (right-bottom.east) .. controls +(right:0.7cm) and +(down:0.7cm) 
    .. node[] {} (process2.south);
\draw[->, thick] (right-mid) -- (process2);
\draw[->, thick] (process2) -- (decision);
\draw[->, thick] (decision) -- node[above] {} (end);
\end{tikzpicture}
\caption{Illustration of bipartization approach based on GNN.}
\end{figure}
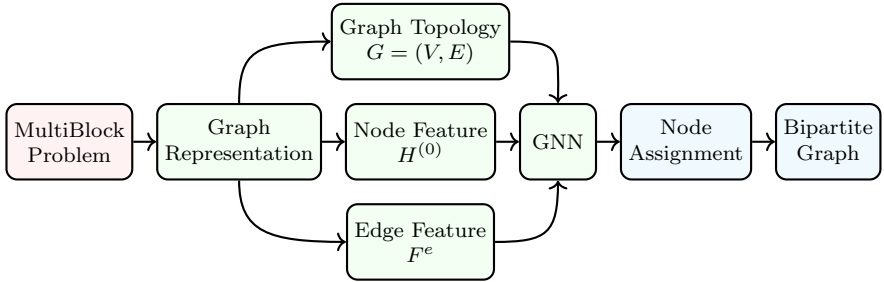

\paragraph{Node Feature Extraction.} We define the node feature matrix as $H^{(0)} \in \mathbb{R}^{|V| \times d_v}$, where $d_v$ is the input node feature dimension. To learn the behavior of the MILP solver accurately for a given class of MILP problems defined by a specific objective function, we extract the node-related information utilized by the MILP itself. For instance, for a class of MILP problems whose objective function involves certain node-specific parameters (e.g., the Lipschitz constants of the smooth terms $\sum_{i \in \texttt{L}} f_i$ and $\sum_{i \in \texttt{R}} f_i$ mentioned in the last section), we extract those parameters or related coefficients as node features. For MILP problems that do not utilize any node-specific information, we use an all-ones matrix as the node features. This implies that, during the GNN message-passing procedure, the node features primarily serve to receive edge features and aggregate information from neighboring nodes.

\paragraph{Edge Feature Extraction.} For a given edge with its associated constraint matrices $Q_i$ and $Q_j$, we extract a variety of features related to both matrices to form the edge feature vector. These include numerical statistical features of the matrices (e.g., dimension, mean, variance), structural features (e.g., rank, trace), and geometric features (e.g., the 2-norm and the infinity norm). Furthermore, we introduce a binary $(0/1)$ indicator feature based on the type of the edge. We denote the edge feature matrix as $F^e \in \mathbb{R}^{|E| \times d_e}$, where $d_e$ represents the edge feature dimension.

\paragraph{Learning Objective.} Given the node features $H^{(0)}$, edge features $F^e$, and the topology of the graph $G = (V, E)$, the goal of the GNN is to learn the following mapping:
$f \colon (H^{(0)}, F^e, G) \to c \in \{0,1\}^{|V|}$,
where $c \in \{0,1\}^{|V|}$ denotes the node assignment for the bipartite graph $\hat{G}$. For instance, $c(i) = 0$ corresponds to assigning node $i$ to the set \texttt{L}, and $c(i) = 1$ corresponds to assigning node $i$ to the set \texttt{R}.

During the training phase of the GNN, we use the MILP to annotate the training data. In the inference phase, after obtaining the node assignment vector $c$ for a specific problem, we can derive the edge mapping $\sigma$ according to the following rule. Given an edge $e = (i, j) \in E$, if $c(i) \neq c(j)$, then $\sigma(e) = (0, 0)$, which indicates that node $i$ and node $j$ belong to different partitions of the bipartite graph. This edge does not need to be split, and thus no additional variables are required. Otherwise, if $c(i) = c(j)$, we set $\sigma(e) = (1, c(i))$, which implies that the edge must be split by introducing a new variable, and the newly created node will be assigned to the same partition as node $i$. After that, Algorithm~\ref{alg:bipartite construction} is applied to generate a bipartite graph according to $c$ and $\sigma$.

\section{Experiments}\label{sec: experiment}
We present our numerical experiments in this section. The objective is to demonstrate that, compared to naive and standard reformulations, the proposed bipartization pipeline produces high–quality reformulations that accelerate parallel ADMM. All bipartization methods and ADMM variants are implemented in \texttt{PDMO.jl}. Additional codes and scripts used to generate results reported in this paper are available in the \texttt{test/reformulation} branch:
\begin{center}
\small
\url{https://github.com/alibaba-damo-academy/PDMO.jl/tree/test/reformulation}.
\end{center}
We provide implementation details and structure of experiments next. 

\paragraph{Bipartization Methods.}
We evaluate reformulations generated by BFS-based, MILP-based, and GNN-based bipartizations, all of which are implemented in the open-source package \texttt{PDMO.jl}.
\begin{itemize}
    \item \textbf{BFS.} We use Algorithm~\ref{alg:BFS-bipartization} for the BFS-based bipartization. A DFS-based bipartization is also available in \texttt{PDMO.jl} but omitted from comparison. 
    \item \textbf{MILP.}  For MILP-based bipartization, we solve the following formulation:
\begin{subequations}\label{eq: milp2}
\begin{align}
    \min \quad & t^\texttt{L} + t^\texttt{R} 
    + \left(\sum_{i\in V} x^\texttt{L}_i + \sum_{e\in E} x^\texttt{L}_e\right)
    + \left(\sum_{i\in V} x^\texttt{R}_i + \sum_{e\in E} x^\texttt{R}_e\right) \\
    \mathrm{s.t.}\quad 
    & \eqref{constr: node},\ \eqref{constr: edge},\ \eqref{eq: contr: contribution}, \\
    & t^{\texttt{R}} \ge \text{contribution}(i)\, x^{\texttt{R}}_i,~ \forall i\in V, \\
    & t^{\texttt{R}} \ge \sqrt{2} x^{\texttt{R}}_e,~ \forall e\in E, \\
    & (x^{\texttt{L}}_i, x^{\texttt{R}}_i)\in\{0,1\}^2,~ \forall i\in V, ~(z_e, x^{\texttt{L}}_e, x^{\texttt{R}}_e)\in\{0,1\}^3,~ \forall e\in E.
\end{align}
\end{subequations}
As suggested in \eqref{eq: surrogate}, we use $\|\cdot\|_F$ as estimates for $\text{contribution}(i)$ to avoid expensive computation of matrix norms.
The objective jointly penalizes (upper bounds of) norms $\|\bft{A}\|$ and $\|\bft{B}\|$ and the total number of nodes introduced during bipartization, encouraging compact and well-balanced decompositions. This MILP is solved by \texttt{HiGHS} \cite{huangfu2018parallelizing} interfaced through \texttt{JuMP} \cite{Lubin2023}. To encourage faster primal solutions, we set $\texttt{time\_limit} = 60$, $\texttt{mip\_heuristic\_effort}=0.2$, and $\texttt{mip\_rel\_gap}=0.01$ for \texttt{HiGHS} as default in \texttt{PDMO.jl}
\item \textbf{GNN.} 
For the experiments, we conducted inference tests using a GNN trained on a dataset of 10{,}000 graphs, each comprising 20 nodes, with all inference performed on CPU. Detailed training procedures and model specifications are provided in Appendix~\ref{section: gnn-details}. Our \texttt{PDMO.jl} package supports two inference backends: one is based on \texttt{PyCall.jl}~\cite{PyCall}, which executes inference through Python and provides faster bipartization performance, and the other one is based on \texttt{ONNXRunTime.jl}~\cite{ONNXRunTime_jl}, which enables Python-independent execution for improved accessibility. In the experiments reported in this paper, we use the \texttt{PyCall.jl} backend. Although the ONNX-based approach is fully supported, it typically incurs longer model loading time in our current implementation.
\end{itemize}
Wall-clock time results reported in this paper \textit{do not exclude} Julia JIT compilation time; subsequent runs within the same Julia session are expected to incur lower overhead, especially for MILPs and GNN inference.

\paragraph{Algorithms.}
We compare reformulations by two algorithms from \texttt{PDMO.jl}. 
\begin{itemize}
\item \textbf{Original ADMM}, where each partial augmented Lagrangian subproblem is solved using either \texttt{IPOPT}~\cite{wachter2006implementation} with default linear solvers when subproblems are constrained nonlinear programs, or customized solvers when subproblems have special forms, e.g., proximal mappings, linear systems, and one-dimensional minimization. 
\item A variant of \textbf{FLiP-ADMM} known as \textit{doubly linearized ADMM} (see Section~8 of~\cite{ryu2022large}) that performs only proximal gradient updates, i.e., it queries gradient oracles of $f_i$ and proximal oracles of $g_i$. 
\end{itemize}
Our focus is on isolating the effect of different reformulations under a fixed ADMM variant. In particular, we do not tune ADMM parameters or compare among ADMM algorithms. Performance differences should therefore be attributed to reformulations rather than solver engineering.

\paragraph{Structure of Experiments.}
Sections~\ref{section: lp} and \ref{section: network} illustrate how the MBP structure can be exposed in linear program instances and examine how BFS- and MILP-based bipartizations influence the convergence behavior of ADMM. Sections~\ref{section: dcopf} and \ref{sec: consensus_opt} further evaluate the effectiveness of the proposed pipeline on more structured applications with GNN-based bipartization. In particular, Section~\ref{section: dcopf} studies the distributed DC optimal power flow problem, while Section~\ref{sec: consensus_opt} considers decentralized consensus optimization on random graphs, highlighting both scalability and generality. All experiments are implemented in \texttt{PDMO.jl}, executed with 16 Julia threads. Computations are carried out on a cluster of nodes, each equipped with 8 CPU cores and 30\,GB RAM.  
\subsection{Linear Program: A Case Study}\label{section: lp}
Consider a generic Linear Program (LP) of the form
\begin{align}\label{eq: lp}
    \min_{x}\{ c^\top x~|~ \underline{b} \leq Ax \leq \overline{b}, ~ l \leq x \leq u\},
\end{align}
where $x\in \R^n$ denotes the decision vector, $A\in \R^{m\times n}$ is the constraint matrix, $c\in \R^n$ is the cost vector, and $l,u\in \R^n$ and $\underline b, \overline b \in \R^m$ specify variable and constraint bounds. In order to run ADMM, a classic way to fit \eqref{eq: lp} into the two-block form \eqref{eq: two_block_problem} is to introduce a slack vector $z\in\R^m$ and rewrite the problem as
\begin{align}\label{eq: naive}
    \min_{x,z}\{c^\top x~|~ Ax - z = \bft{0},~ x\in[l,u],~ z\in[\underline b,\overline b]\}.
\end{align}
Although \eqref{eq: naive} is immediately compatible with ADMM, it ignores the structural sparsity of $A$ and provides no guidance on how to group variables and constraints in a manner conducive to parallelism. 

A more effective approach is to reveal latent block structure in $A$ and then apply the bipartization pipeline to obtain an ADMM-friendly reformulation, i.e., rewrite the generic LP into a multiblock form \eqref{eq: multiblock_nlp}. 
For this purpose, we adopt a lightweight co-clustering heuristic that reorders the rows and columns of $A$ to expose block patterns:
\begin{enumerate}
    \item Initialize $k$ empty column clusters and $k$ row clusters, where $k >0$ is specified by the user. Assign columns cyclically so that each cluster receives columns in turn, and place all rows in a single row cluster.
    \item Run a fixed number of alternating passes: each iteration assigns rows to the row cluster most represented among their incident column clusters, then columns to the column cluster most supported by their incident row clusters, always breaking ties toward smaller cluster indices.
    \item Upon termination of the above loop, columns in the same cluster form a block variable, and rows in the same cluster form a block constraint. Add slacks for block constraints containing inequality constraints.
\end{enumerate}

As a case study,  we apply this procedure to the instance \texttt{enlight\_hard} from MIPLIB~\cite{gleixner2021miplib}, and investigate the performance of FLiP-ADMM on different reformulations.   Fig.~\ref{figure: matrix_spy} shows the original matrix and its co-clustered permutation after choosing $k=4$ and running 5 alternating passes. The heuristic finds four block variables $\bft{x}_i$'s for  $i\in [4]$ and four block constraints $C_j$'s for $j\in [4]$. Fig.~\ref{figure: block view} shows the resulting block view and the induced coupling graph. The node ``$C_2$" and ``$C_4$" in Fig. \ref{figure: block view}(b) refer to the second and fourth block constraints coupling $(\bft{x}_1, \bft{x}_2, \bft{x}_3)$ and $(\bft{x}_1, \bft{x}_3, \bft{x}_4)$, respectively. The other two block constraints involve exactly two blocks with edges labeled as $C_1$ and $C_3$.
\begin{figure}[ht]
    \centering
    \begin{subfigure}{0.48\textwidth}
        \centering
        \includegraphics[width=\linewidth]{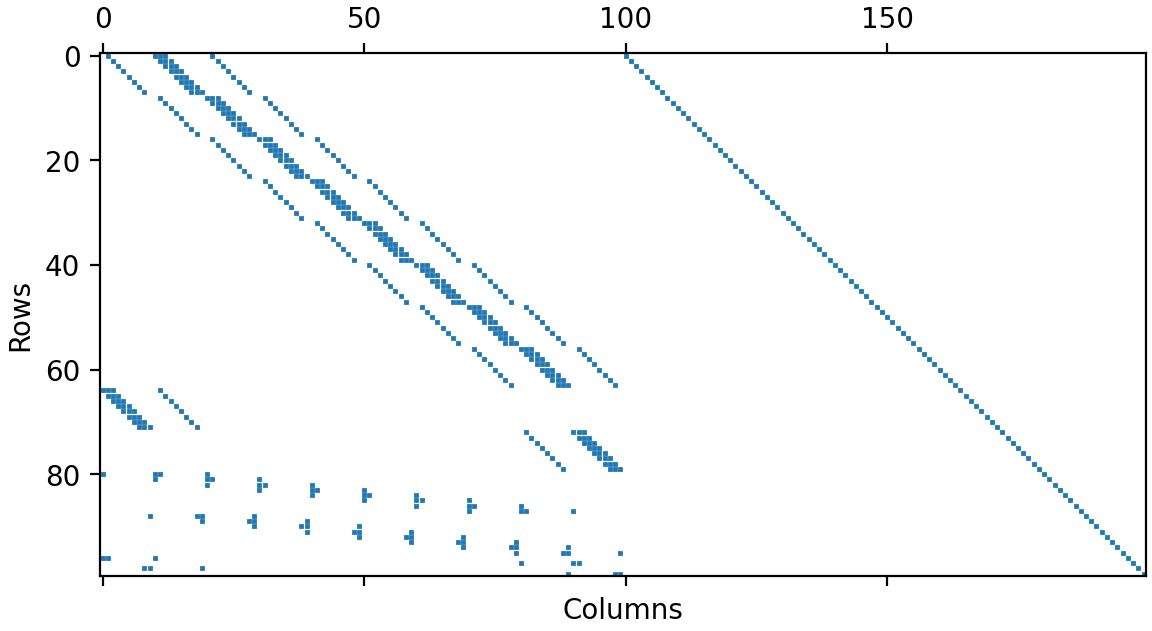}
        \caption{Original ${A}$}
    \end{subfigure}
    \hfill
    \begin{subfigure}{0.48\textwidth}
        \centering
        \includegraphics[width=\linewidth]{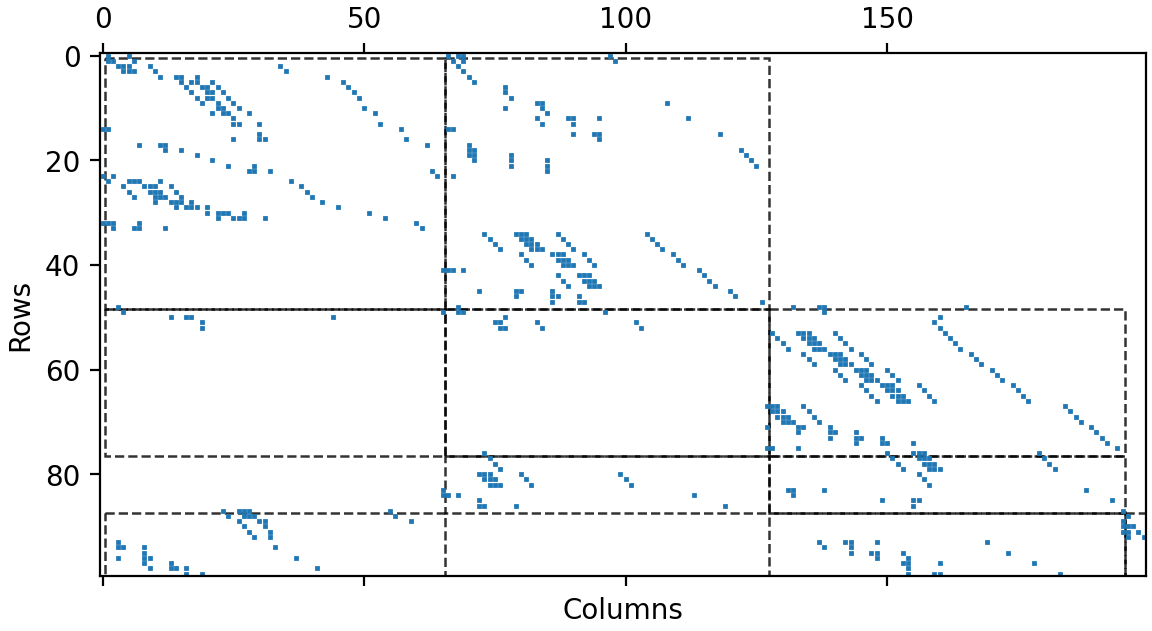}
        \caption{Permuted ${A}$}
    \end{subfigure}
    \caption{Original and permuted ${A}$ of \texttt{enlight\_hard}. Blocks of permuted ${A}$ are divided by dotted lines.} 
    \label{figure: matrix_spy}
\end{figure}

\begin{figure}[ht]
    \centering
    \begin{subfigure}{0.48\textwidth}
        \centering
        \includegraphics[width=\linewidth]{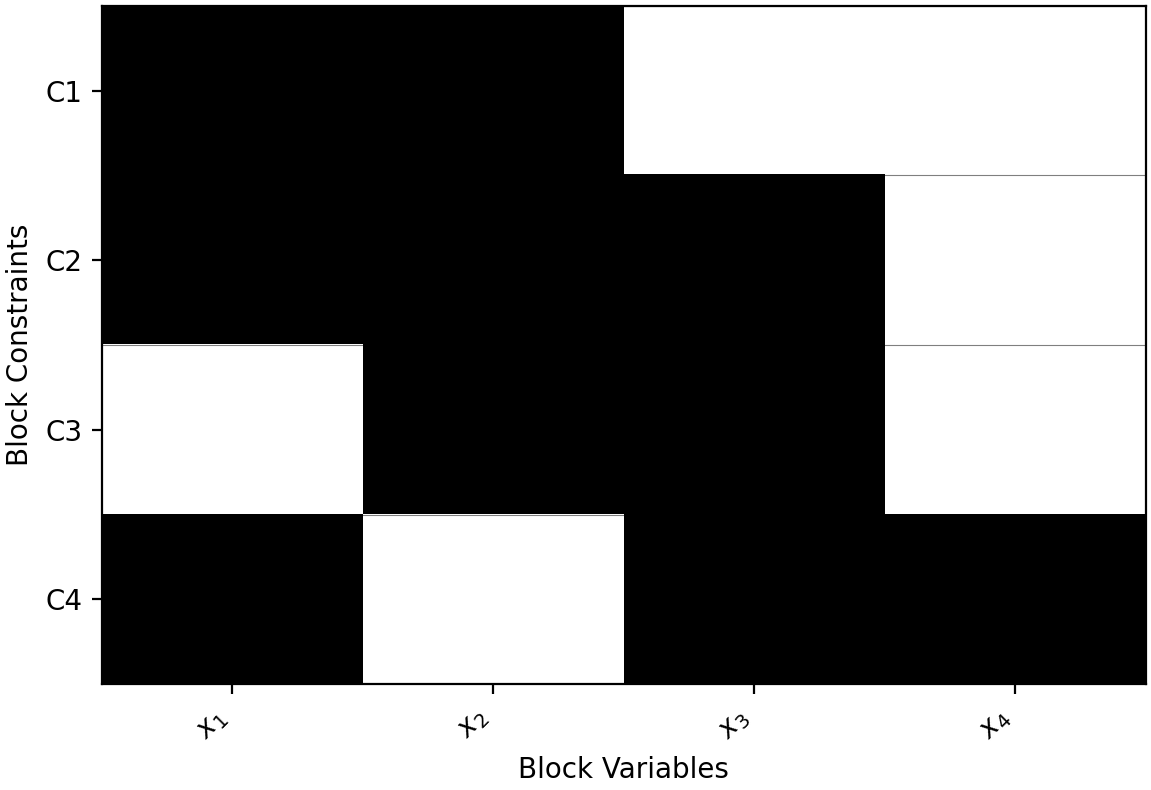}
        \caption{Block view of permuted ${A}$}
    \end{subfigure}
    \hfill
     \begin{subfigure}{0.48\textwidth}
        \centering
        \includegraphics[width=\linewidth]{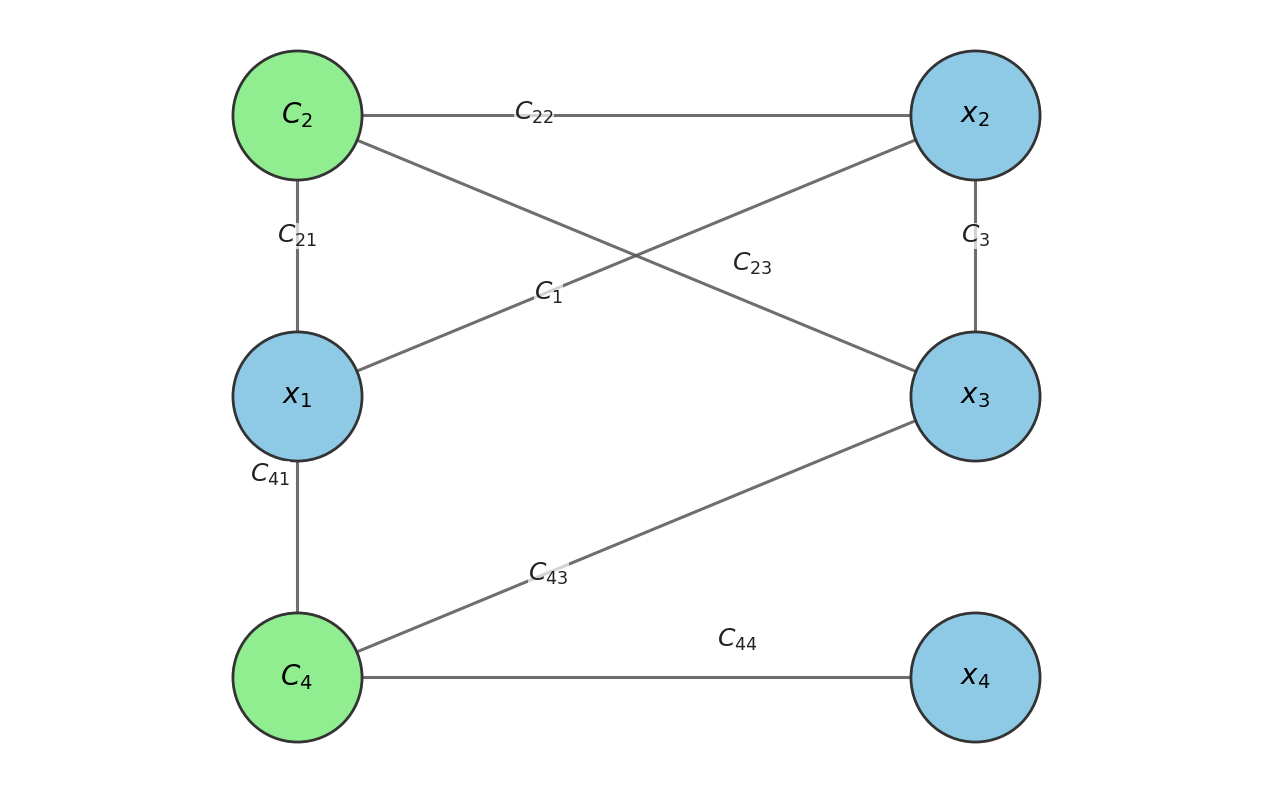}
        \caption{Graph representation}
    \end{subfigure}
    \caption{Block view of permuted ${A}$ and its graph representation.}
    \label{figure: block view}
\end{figure}

Fig.~\ref{figure: bipartite graph} compares the bipartite graphs produced by the MILP-based bipartization and the BFS-based bipartization.  
The MILP formulation yields a clean separation in which all original block variables reside on the left side, while all constraint nodes are placed on the right. In contrast, the BFS heuristic produces a mixed assignment in which variables, constraints, and subdivision nodes may appear on either side. Both constructions are valid bipartitions; the differences in structure reflect the
distinct mechanisms by which MILP and graph traversal assign nodes to partitions.

\begin{figure}[ht]
    \centering
    \begin{subfigure}{0.48\textwidth}
        \centering
        \includegraphics[width=\linewidth]{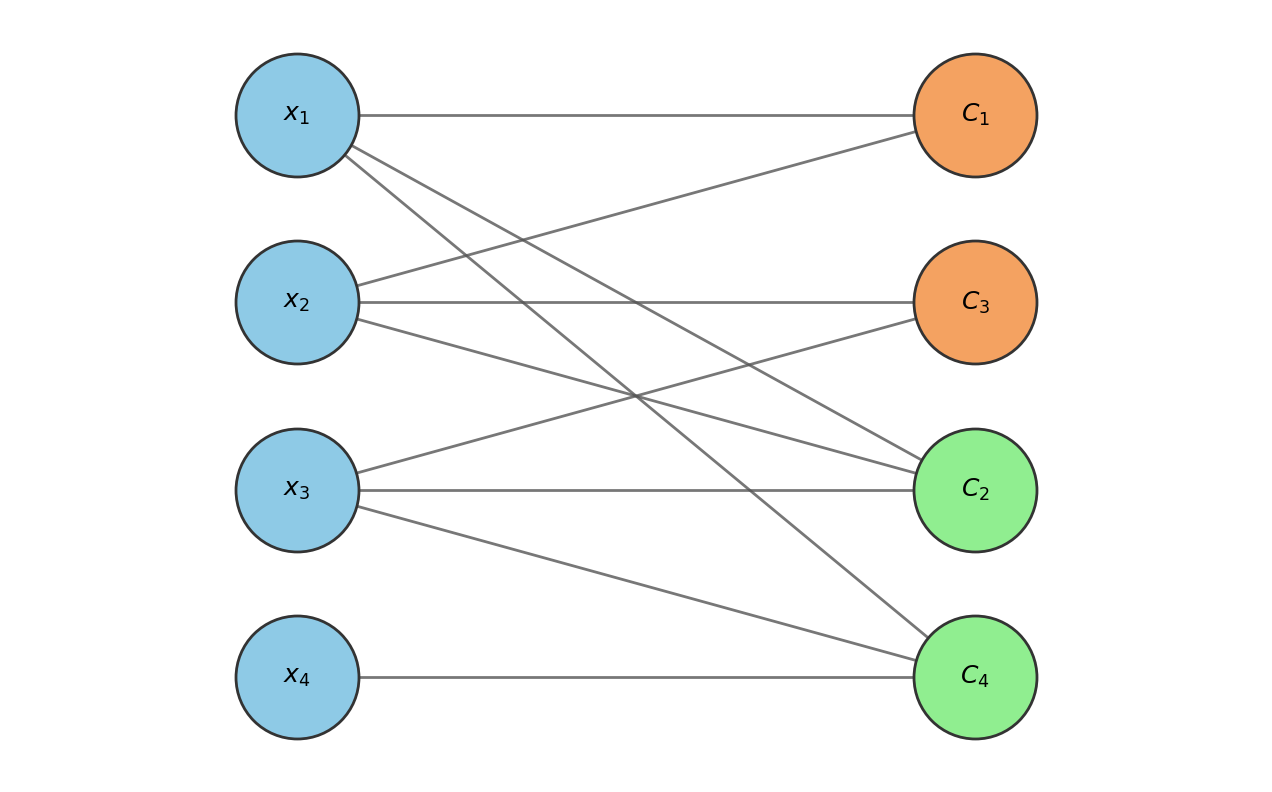}
        \caption{By MILP}
    \end{subfigure}
    \hfill
     \begin{subfigure}{0.48\textwidth}
        \centering
        \includegraphics[width=\linewidth]{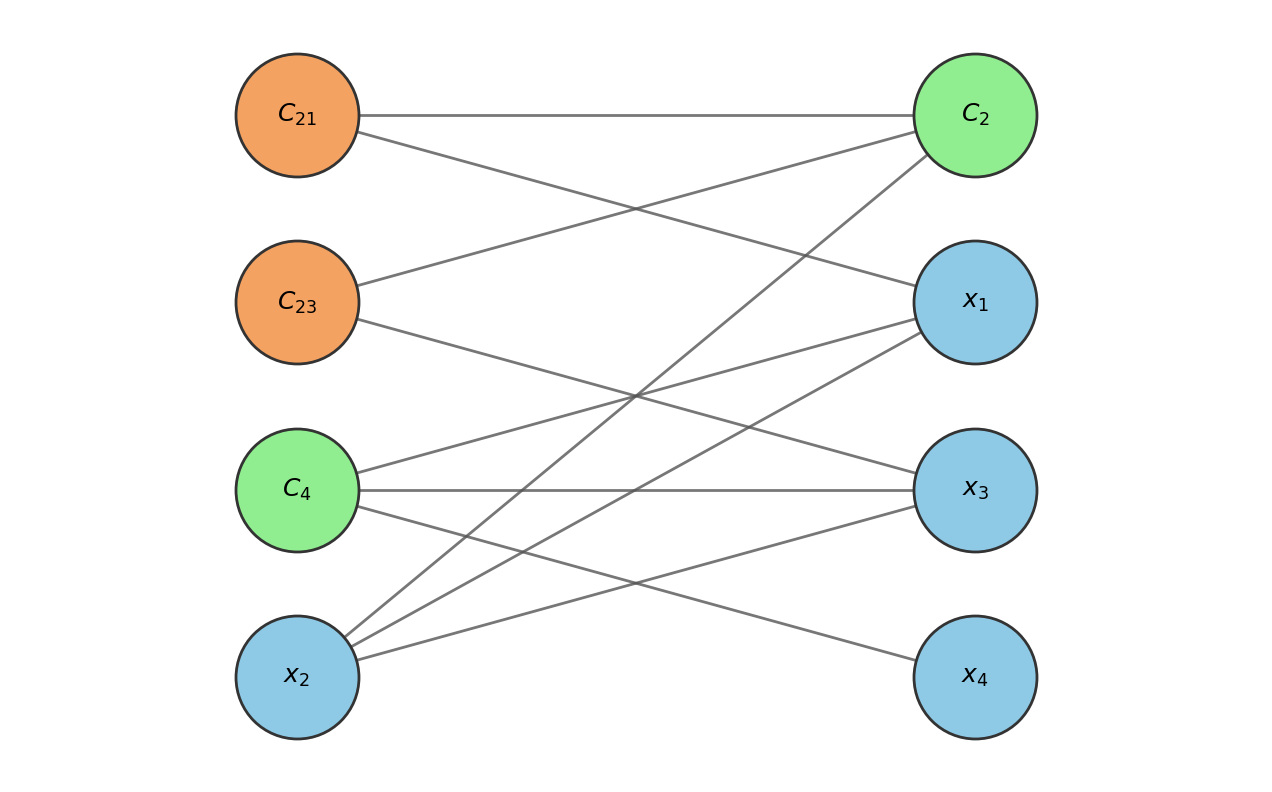}
        \caption{By BFS}
    \end{subfigure}
  \caption{Bipartite graph obtained by MILP and BFS.}\label{figure: bipartite graph}
\end{figure}   

Finally, Fig.~\ref{figure: primal_dual_residuals} reports the primal residual (plotted every 500 iterations for clarity) and dual residual trajectories of FLiP-ADMM ($\rho=1000$) applied to three formulations: the basic slack reformulation
\eqref{eq: naive}, the BFS-based bipartization, and the MILP-based bipartization. While all three formulations achieve small primal residuals quickly, two bipartization strategies lead to markedly faster convergence in dual residuals than the basic formulation, with the MILP-based decomposition exhibiting the steepest and most consistent residual decay. This case study demonstrates that even for an unstructured LP, revealing latent block patterns and applying bipartization can substantially improve the performance of ADMM. The advantages of MILP-based bipartization are already visible despite the weak inherent structure of the underlying model. We next examine a classic application of LP with an explicit multiblock structure.
\begin{figure}[ht]
    \centering
    \begin{subfigure}{0.48\textwidth}
        \centering
        \includegraphics[width=\linewidth]{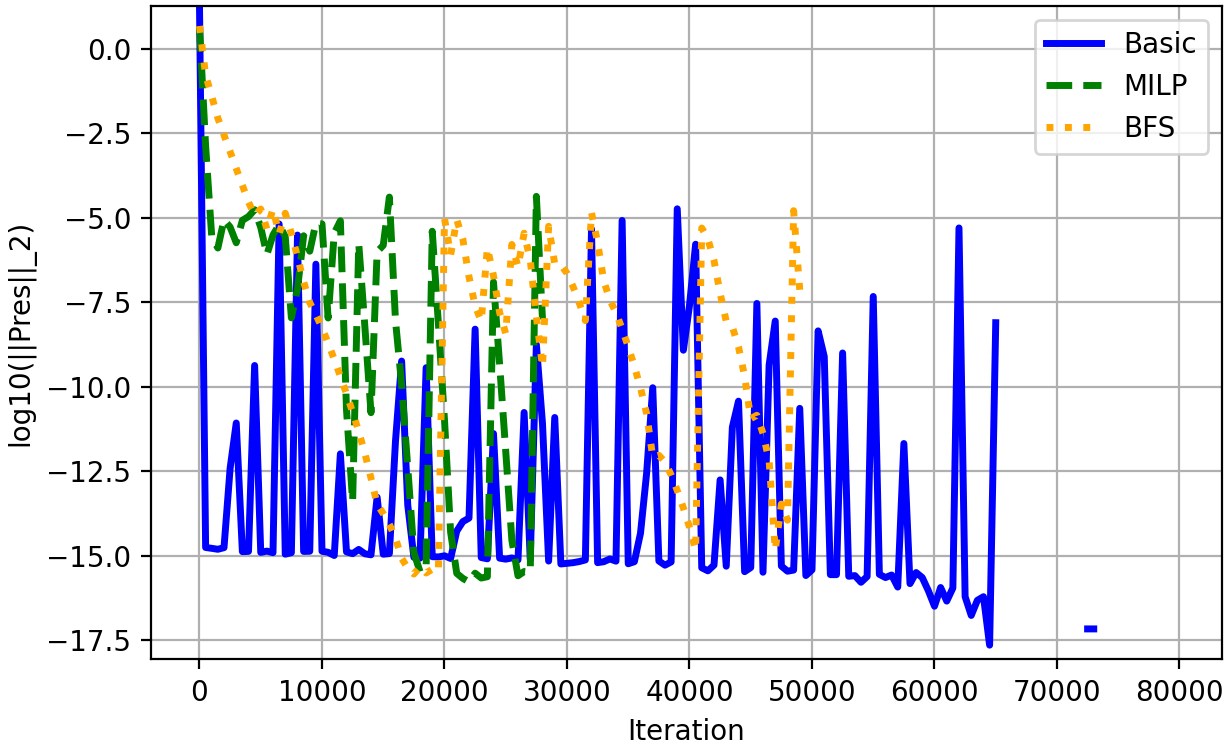}
        \caption{Primal Residuals}
    \end{subfigure}
    \hfill
    \begin{subfigure}{0.48\textwidth}
        \centering
        \includegraphics[width=\linewidth]{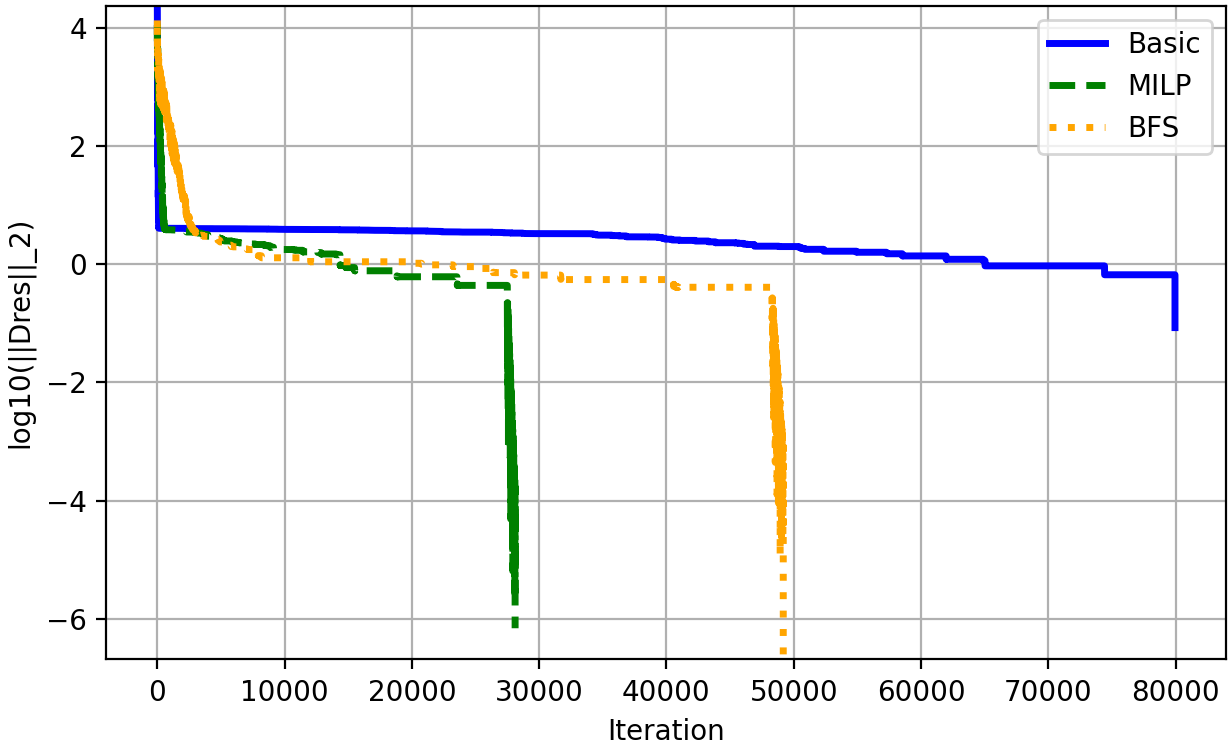}
        \caption{Dual Residuals}
    \end{subfigure}
    \caption{Trajectories of FLiP-ADMM on different reformulations.} 
    \label{figure: primal_dual_residuals}
\end{figure}

\subsection{Linear Program: The Network Flow Problem}\label{section: network}
We consider the classic network flow problem \eqref{eq: network_flow}. As discussed in Example \ref{example: network_flow}, this problem naturally conforms to the MBP format and is directly amenable to the proposed bipartization pipeline. The resulting ADMM subproblems are either one-dimensional quadratic minimizations over bounded intervals or projections onto linear subspaces, both of which admit closed-form solutions and are efficiently handled within \texttt{PDMO.jl}. Consequently, we focus exclusively on original ADMM in this subsection since first-order updates are unnecessary.

When we use linear arc costs $c_{ij}$, each network flow instance is an LP. Accordingly, we also apply ADMM to the classic reformulation \eqref{eq: naive} with variables $x$ and $z$. An interesting observation is that we have $\overline{b}=\underline{b}$ for network flow problems, implying that the $z$-block remains fixed throughout ADMM iterations. As a result, ADMM on \eqref{eq: naive} effectively reduces to the augmented Lagrangian method (ALM) and only needs to update $x$. The $x$-subproblem has a quadratic term whose Hessian is $\rho A^\top A$, where $A$ is the node–arc incidence matrix. Since $A^\top A$ is non-diagonal, leading to a coupled quadratic program, \texttt{PDMO.jl} will treat the $x$-subproblem as a general nonlinear program and solve it using IPOPT. While more specialized solvers could potentially exploit the structure of $A^\top A$ for improved efficiency, our comparisons in this subsection aim to provide a fair and handy baseline. We adopt network flow problems as representative test cases to highlight the general applicability of the multiblock formulation and the bipartization pipeline, rather than to benchmark against highly specialized and optimized solvers.

We fix $|\mathcal{A}| = 2000$, and for $|\mathcal{N}|\in \{200, 300,\cdots, 600\}$, we generate 10 random feasible directed network-flow instances as follows. First, we construct a connected simple graph on $|\mathcal{N}|$ nodes by creating a single cycle, ensuring every node has degree at least two; we then add additional edges uniformly at random, while preferably preserving a prescribed fraction of nodes with undirected degree two. 
Next, we assign a random orientation to each undirected edge to obtain a directed network. Arc costs and arc capacities are drawn uniformly from $[0,10]$ and $[0,40]$, respectively. Finally, to ensure feasibility, we sample a random feasible flow by drawing each arc flow uniformly from its capacity, and define supplies as the resulting net outflow at each node, followed by a scaling step to keep the supply magnitudes within the prescribed bound. Recall that the resulting MBP has $|\mathcal{A}|=2000$ block variables and the number of block constraints ranges from 200 to 600. ADMM with fixed $\rho=1$ terminates when the maximum violation of primal and dual residuals are less than 1e-4. The averaged iteration and total solution time is shown in Fig. \ref{figure: network_flow}. 
\begin{figure}[ht]
    \centering
    \begin{subfigure}{0.48\textwidth}
        \centering
        \includegraphics[width=\linewidth]{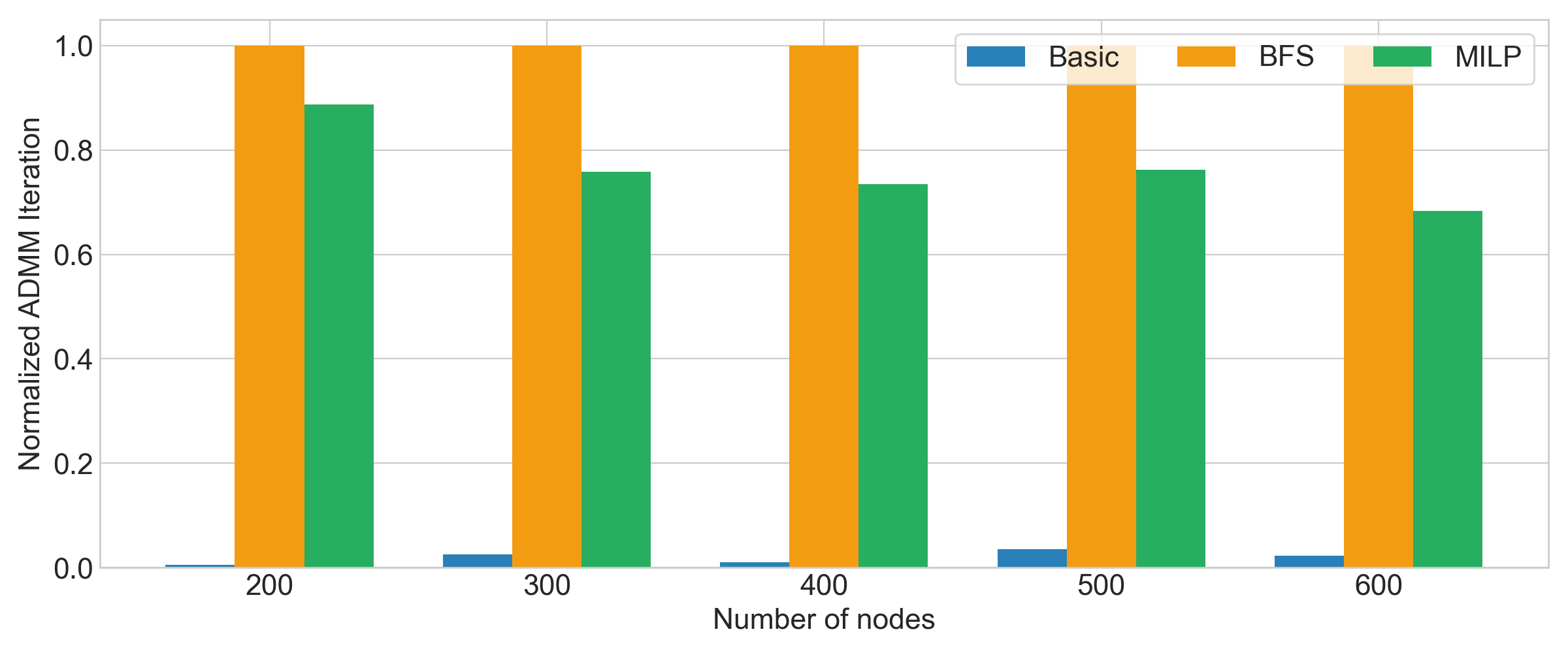}
        \caption{Iteration}\label{fig: network_iter}
    \end{subfigure}
    \hfill
    \begin{subfigure}{0.48\textwidth}
        \centering
        \includegraphics[width=\linewidth]{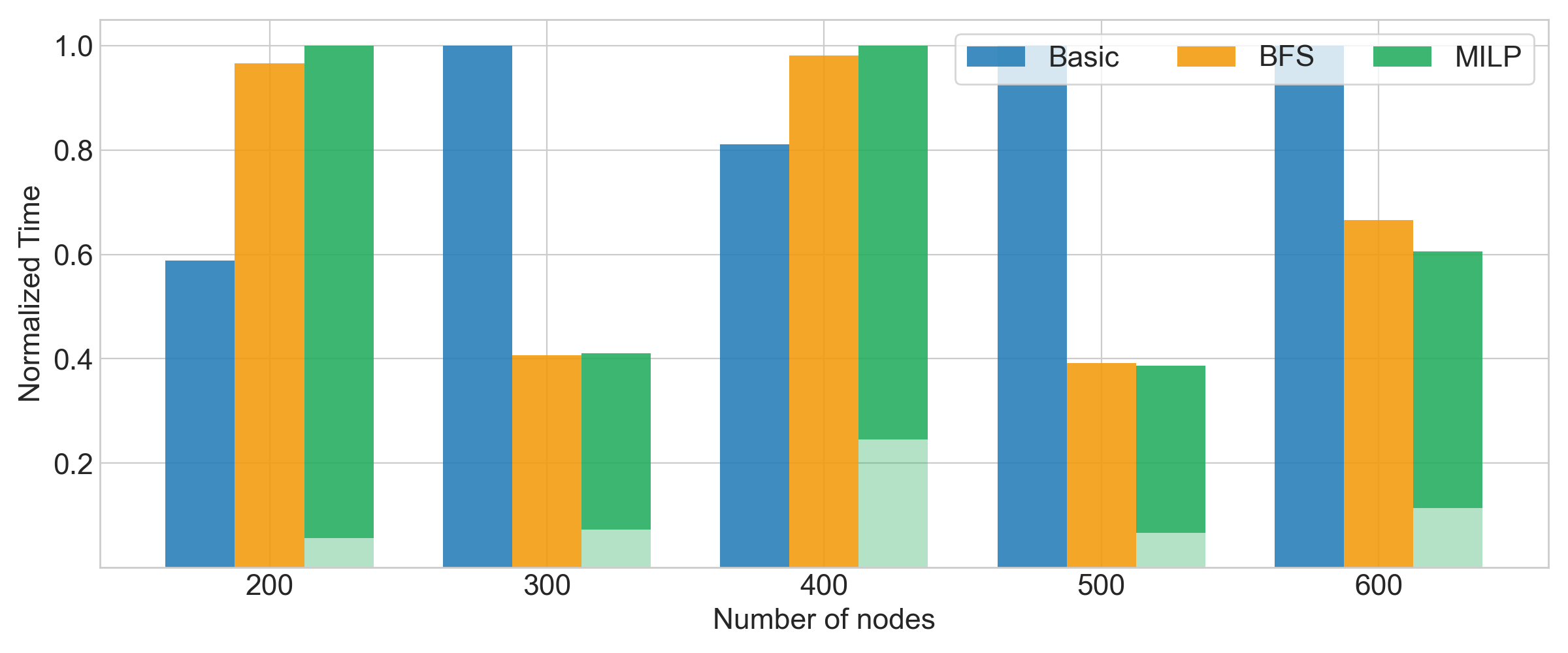}
        \caption{Total Time}
    \end{subfigure}
    \caption{Averaged iterations and total time comparison over 10 instances. Total time is the sum of partition time and ADMM time. Partition time of each method is shown in a lighter tone in the stacked bar. The max time/iteration is normalized to 1.0 for each group.} 
    \label{figure: network_flow}
\end{figure}

As shown in Fig. \ref{figure: network_flow}(a), ADMM applied to the classic reformulation, or equivalently, ALM applied to an equality constrained LP, converges in significantly fewer iterations. Recall that each ALM subproblem directly minimizes the augmented Lagrangian function with respect to the primal variable; in contrast, the bipartization-based ADMM decomposes the ALM subproblem with simpler computation, and hence requires more iterations to converge. Nevertheless, as the problem size grows, the proposed approach exhibits much better scalability and eventually achieves faster overall runtime\footnote{As noted earlier, the ALM subproblem is solved by IPOPT as a baseline; specialized solvers could be faster.} when accounting for both the partitioning cost and the ADMM iterations. Moreover, the MILP-based bipartization consistently yields fewer ADMM iterations than the BFS-based approach. While the MILP procedure may introduce overhead for small instances, its advantage in reducing iteration counts becomes increasingly pronounced for larger problems, eventually compensating for the additional partitioning time.

In the next two subsections, we turn to the Direct Current Optimal Power Flow (DC OPF) problem and Decentralized Consensus Optimization, where the graphical structures are explicit and domain-driven, and demonstrate more detailed numerical comparisons. 

\subsection{Distributed DC Optimal Power Flow}\label{section: dcopf}
In this subsection, we develop a distributed solution approach for the Direct Current Optimal Power
Flow (DC OPF) problem based on the proposed ADMM-based decomposition pipeline. DC OPF is a
fundamental optimization task in electric power systems: its objective is to determine the least-cost
dispatch of generators and the corresponding bus voltage angles that meet all demand while
satisfying nodal power balance and transmission flow limits under the DC power-flow approximation.

Consider a power network $G(N,E)$ with bus set $N$ and transmission line set $E$. The power network is undirected, but for ease of modeling, we assume each edge $(i,j)$ has a default direction, e.g., from $i$ to $j$, and only this representation is kept in $E$. Let $G_i$ denote
the set of generators located at bus $i$ ($G_i=\emptyset$ if bus $i$ has no generator), and
define $G := \cup_{i\in N} G_i$. Let $\delta_i$ denote the set of neighbors of bus $i$. The standard DC OPF model is
\begin{subequations} \label{eq: dc_opf}
\begin{align}
    \min_{\stackrel{\{P_g\}_{g\in G},\{\theta_i\}_{i \in N}}{\{f_{ij}\}_{(i,j)\in E}} }\quad 
        & \sum_{g \in G} c_g(P_g) \label{eq: dc_opf_obj}\\
    \mathrm{s.t.}\quad 
        & \sum_{g\in G_i}P_g - P^D_i = \sum_{j\in \delta_i} f_{ij}, ~\forall i \in N, \label{eq: dc_opf_nodal_balance}\\
        & f_{ij} = B_{ij}(\theta_i - \theta_j), ~\forall (i,j) \in E, \label{eq: dc_opf_flow_def}\\
        & \underline{P}_g \leq P_g \leq \overline{P}_g, ~\forall g \in G, \label{eq: dc_opf_gen_limit}\\ 
        & \underline{f}_{ij} \leq f_{ij} \leq \overline{f}_{ij}, ~ \forall (i,j) \in E. \label{eq: dc_opf_flow_limit}
\end{align}
\end{subequations}
The decision variables include generator outputs $P_g$, bus voltage angles $\theta_i$, and transmission line flows $f_{ij}$.
The objective \eqref{eq: dc_opf_obj} aggregates generation costs, where each $c_g(\cdot)$ is typically linear or quadratic. Constraint \eqref{eq: dc_opf_nodal_balance} enforces nodal power balance with $P^D_i$ denoting the demand at bus $i$. Under the DC approximation, line flows are modeled by \eqref{eq: dc_opf_flow_def}, where $B_{ij}$ is the line susceptance. Note that by \eqref{eq: dc_opf_flow_def} we have $f_{ji} = -f_{ij}$, and hence we only define $f_{ij}$ for $(i,j)\in E$. The remaining constraints impose generator and thermal limits. 

\paragraph{Motivation for Distributed Reformulation.} 
There is an increasing need to solve \eqref{eq: dc_opf} in a distributed manner. Large power systems often span multiple regions operated by different Independent System Operators (ISOs) that cannot freely exchange sensitive data. Moreover, the system size can make centralized computation or data
storage infeasible. To address these challenges, distributed numerical solutions are highly desirable. Suppose the bus set $N$ is partitioned into $P$ zones $Z_1,\dots,Z_P$, each operated by an agent indexed by $p \in [P]$. A transmission line $(i,j)\in E$ is called a \emph{tie line} if its endpoints lie in different zones
$z(i)\neq z(j)$. Such buses are \emph{boundary buses}. Let $\mathrm{NB}_p$ denote the neighboring
buses of zone $p$ and $\mathrm{TL}$ the set of all tie lines. See Fig. \ref{fig:zone_partition} for an illustrative example. 
\begin{figure}[ht]
\centering
\begin{tikzpicture}[
  bus/.style={circle, draw, thick, fill=white, inner sep=1.2pt, minimum size=16pt, font=\small},
  intline/.style={thick},
  tieline/.style={thick, dashed},
  zonebox/.style={rounded corners, thick, draw, fill opacity=0.08, inner sep=10pt},
  boundary/.style={bus, very thick, fill=yellow!30},
  zonelabel/.style={font=\small\bfseries}
]

\node[bus]      (1) at (0,1.2) {$1$};
\node[boundary] (2) at (0,0)   {$2$};
\node[boundary] (3) at (1.2,0.6) {$3$};

\node[boundary] (4) at (3.4,1.2) {$4$};
\node[boundary] (5) at (3.4,0)   {$5$};
\node[bus]      (6) at (4.6,0.6) {$6$};

\node[boundary] (7) at (1.7,-2.0) {$7$};
\node[boundary] (8) at (3.0,-2.0) {$8$};

\draw[intline] (1) -- (3);
\draw[intline] (2) -- (3);
\draw[intline] (4) -- (6);
\draw[intline] (5) -- (6);
\draw[intline] (7) -- (8);

\draw[tieline] (3) -- (4);
\draw[tieline] (2) -- (7);
\draw[tieline] (8) -- (5);

\node[zonebox, fill=blue!30,  fit=(1)(2)(3)] (Z1) {};
\node[zonebox, fill=green!30, fit=(4)(5)(6)] (Z2) {};
\node[zonebox, fill=orange!35,fit=(7)(8)]   (Z3) {};

\node[zonelabel, anchor=south west] at (Z1.north west) {$Z_1$};
\node[zonelabel, anchor=south west] at (Z2.north west) {$Z_2$};
\node[zonelabel, anchor=north west] at (Z3.south west) {$Z_3$};

\end{tikzpicture}
\caption{An illustrative 8-bus example with $P=3$ zones. 
The zones are $Z_1=\{1,2,3\}$, $Z_2=\{4,5,6\}$, and $Z_3=\{7,8\}$. 
Dashed edges denote tie lines, with $\mathrm{TL}=\{(2,7),(3,4),(5,8)\}$. 
Boundary buses are highlighted in yellow.
For each zone $p$, the set of neighboring buses is defined as 
$\mathrm{NB}_1=\{4,7\}$, $\mathrm{NB}_2=\{3,8\}$, and $\mathrm{NB}_3=\{2,5\}$.}\label{figure: opf_demo}
\label{fig:zone_partition}
\end{figure}
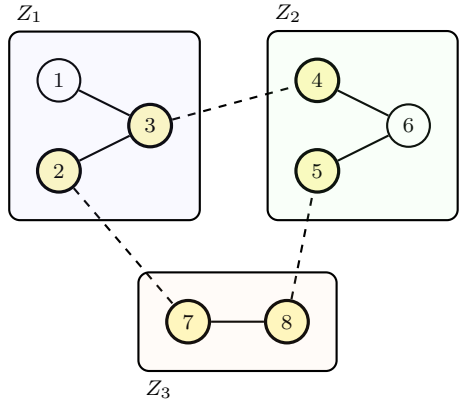

Each agent maintains local copies of the variables associated with its own buses and boundary buses. The local variable vector is
\begin{subequations}
\begin{align}
x_p := & \left(\{P_g^p\}_{g\in G_i, i\in Z_p},\;
            \{\theta^{p}_i\}_{i \in \overline{Z}_p},\;
            \{f^p_{ij}\}_{(i,j)\in E_p}\right),\\
\text{where~} E_p :=& E\cap((Z_p \times Z_p)\cup(\mathrm{NB}_p \times Z_p)\cup(Z_p\times \mathrm{NB}_p),   
\end{align}
\end{subequations}
and are subject to local constraints
\begin{align}
    X_p := \{x_p:\;&
    \text{\eqref{eq: dc_opf_nodal_balance} for } i\in Z_p,\;
    \text{\eqref{eq: dc_opf_gen_limit} for } g\in G_i,\; i\in Z_p, \notag \\
    &\text{\eqref{eq: dc_opf_flow_def} and \eqref{eq: dc_opf_flow_limit} for }
    (i,j)\in E\cap(Z_p\times (Z_p \cup \mathrm{NB}_p)) \}.
\end{align}
Using these local variables, the centralized DC OPF \eqref{eq: dc_opf} can be rewritten in
distributed form as:
\begin{subequations}\label{eq: distributed_dc_opf}
\begin{align}
    \min_{\{x_p\}_{p\in [P]}} 
    \quad & \sum_{p=1}^P \left(\sum_{g\in G_i,\, i\in Z_p} c_g(P_g^p)\right) 
        \label{eq: distributed_obj}\\
    \mathrm{s.t.}\quad 
        & \theta^{z(i)}_i = \theta^{z(j)}_i,\quad \forall (i,j)\in \mathrm{TL}, 
            \label{eq: theta_i_consensus} \\
        & \theta^{z(i)}_j = \theta^{z(j)}_j,\quad \forall (i,j)\in \mathrm{TL}, 
            \label{eq: theta_j_consensus}\\
        & f^{z(i)}_{ij}= f^{z(j)}_{ij},\quad \forall (i,j)\in \mathrm{TL}, 
            \label{eq: flow_consensus}\\
        & x_p \in X_p,\quad \forall p\in [P]. 
            \label{eq: dc_opf_local}
\end{align}
\end{subequations}
The objective \eqref{eq: distributed_obj} is separable across zones, and constraints \eqref{eq: theta_i_consensus}–\eqref{eq: flow_consensus} impose consensus on duplicated variables (voltage angles and tie-line flows) shared between adjacent regions. The remaining constraints \eqref{eq: dc_opf_local} enforce all DC power flow physics locally within each zone.
Importantly, the distributed formulation \eqref{eq: distributed_dc_opf} is precisely an instance of
the multiblock problem \eqref{eq: multiblock_nlp}: each zone corresponds to a block variable,
consensus constraints appear as pairwise couplings between blocks, and all other constraints and
cost terms remain local. We emphasize that the reformulation \eqref{eq: distributed_dc_opf} fits into the MBP format in a way that differs from the construction presented in the previous subsection, and still integrates smoothly into our bipartization pipeline. Subproblems of both original and FLiP-ADMM minimize a quadratic objective function subject to constraints $X_p$; we use \texttt{IPOPT} to solve these constrained problems. 

\paragraph{Performance of original ADMM over various networks.} We test our method on a collection of OPF networks of varying scales from MATPOWER~\cite{zimmerman2010matpower}. For each case, we use \texttt{Graphs.jl}~\cite{Graphs2021} to partition the underlying power grid into $P = 3,4,\cdots, 10$ zones to emulate multiple local system operators. For every such grid with given zones, we apply our pipeline (with BFS-, MILP, or GNN-based bipartization) to obtain a two-block ADMM-ready structure, and then run ADMM on the resulting distributed model until the $\ell_\infty$ norms of both primal and dual residuals are less than 1e-4.

Fig.~\ref{figure: opf_original_admm} reports the averaged ADMM iterations and total runtime (partition time plus ADMM time), where the average is taken over eight partition sizes for each test case. The BFS heuristic is extremely fast, requiring essentially no preprocessing time. In contrast, the MILP-based approach incurs higher partitioning cost but produces moderately better decompositions, leading to noticeably fewer ADMM iterations across all test cases. For small networks, the preprocessing overhead of the MILP approach dominates, resulting in longer total runtime. However, for larger networks, the improved decomposition quality often compensates for this overhead and yields faster overall convergence. The GNN-based approach achieves iteration counts and total runtime comparable to the MILP-based method in the three largest cases, while exhibiting some variability on smaller instances. This reflects a typical limitation of learning-based methods, whose performance depends on the representativeness of the training data and may deteriorate on structurally different instances. We also note that the MATPOWER cases are standard research benchmarks, partly anonymized or synthetic, which may further affect generalization behavior.
\begin{figure}[ht]
    \centering
    \begin{subfigure}{0.48\textwidth}
        \centering
        \includegraphics[width=\linewidth]{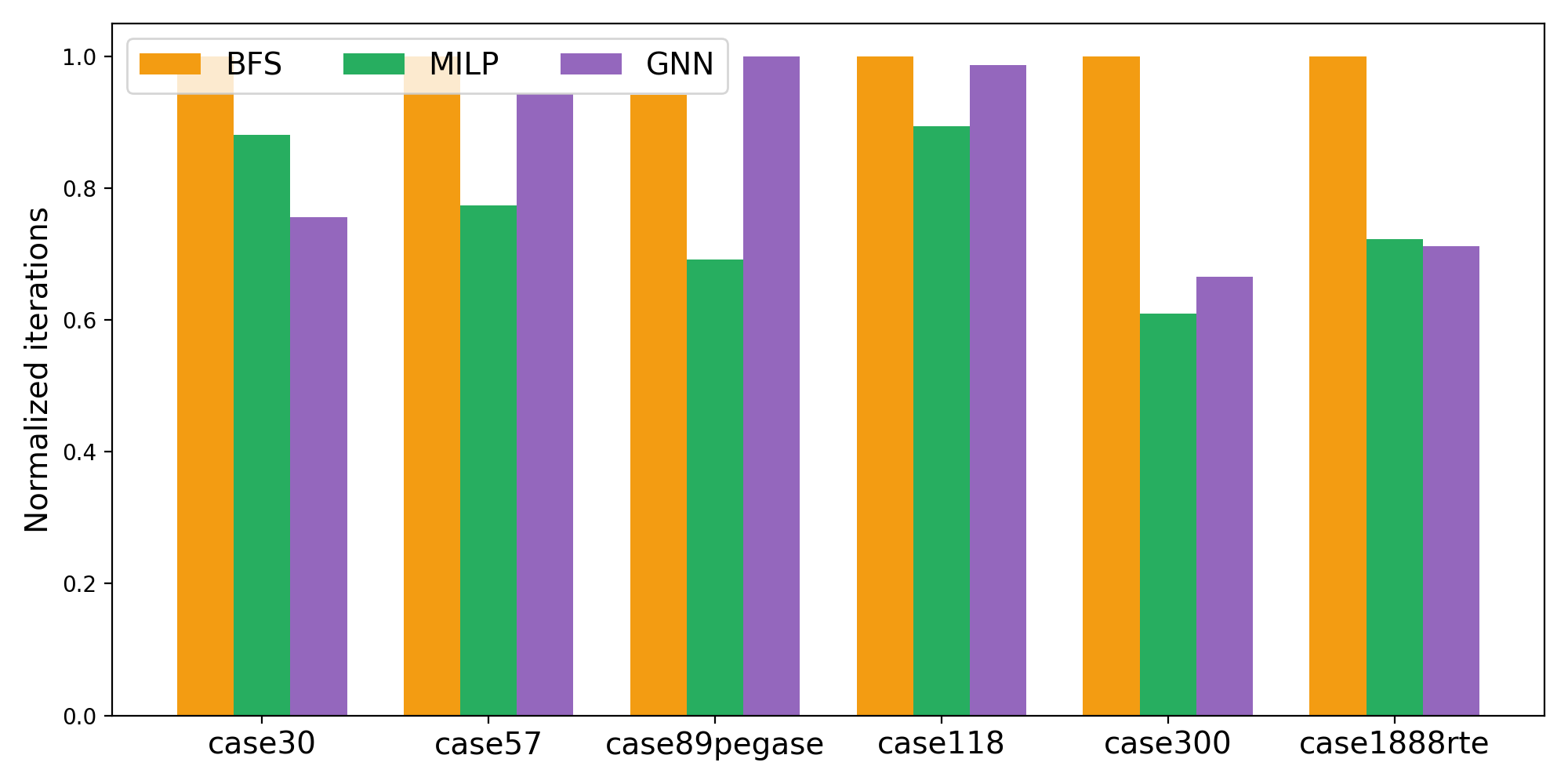}
        \caption{Iteration}
    \end{subfigure}
    \hfill
    \begin{subfigure}{0.48\textwidth}
        \centering
        \includegraphics[width=\linewidth]{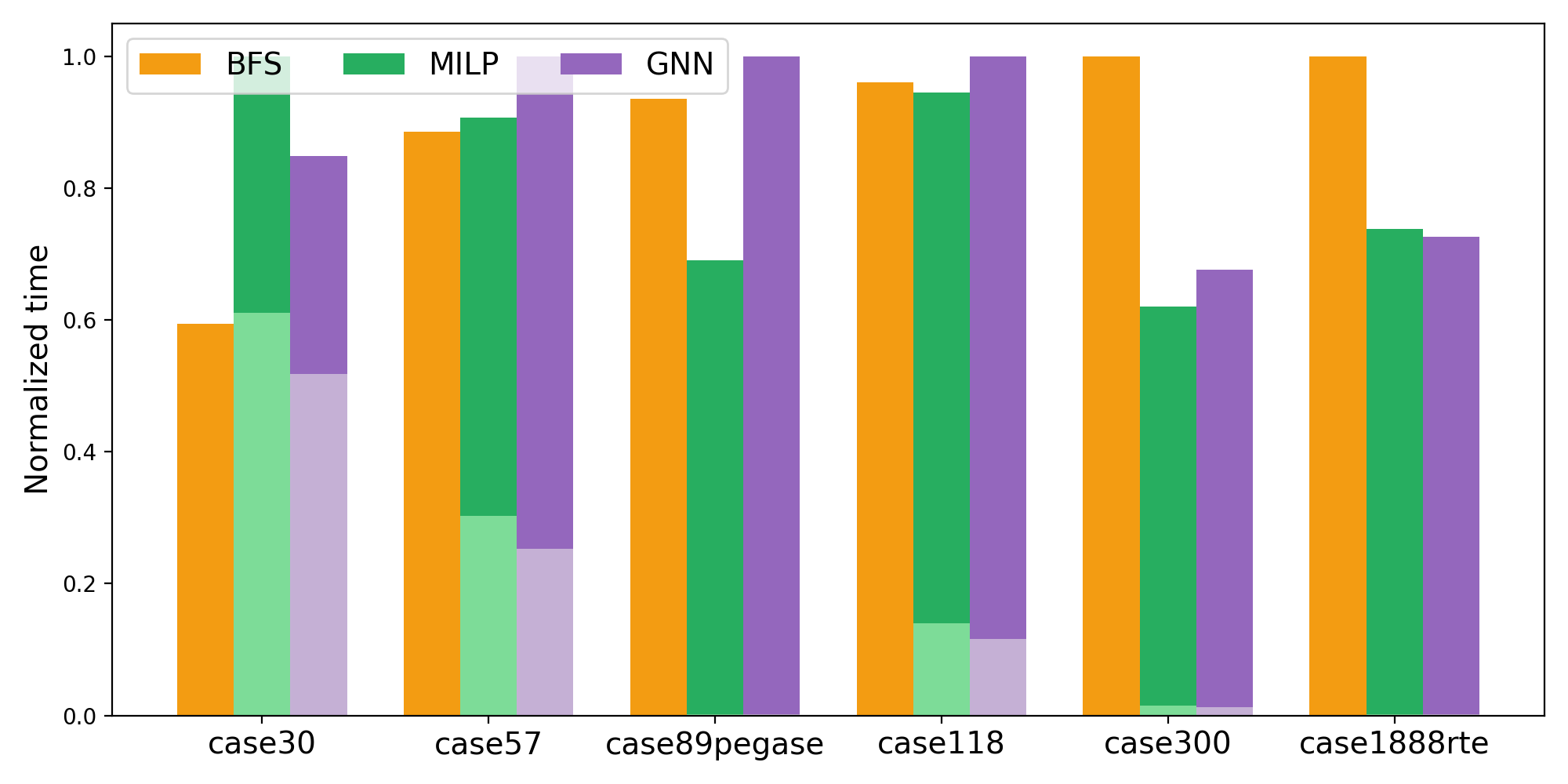}
        \caption{Total Time}
    \end{subfigure}
    \caption{Averaged iterations and total time comparison of original ADMM $(\rho=100)$. Partition time is shown in a lighter tone. The max time/iteration is normalized to 1.0 for each case.} 
    \label{figure: opf_original_admm}
\end{figure}

\paragraph{Performance of FLiP-ADMM with Many Zones.}
We further conduct a detailed case study on the \texttt{case57} network to examine how the behavior of FLiP-ADMM is influenced by different bipartization strategies under a larger number of zones. Specifically, we partition the grid into $P = 13,14,\dots,18$ regions and apply the bipartization pipeline using the same three partitioning methods. Fig.~\ref{figure: opf_flip_admm} summarizes the results.

In most cases, the MILP-generated bipartitions require fewer ADMM iterations than those obtained via BFS. The improved structural balance achieved by the MILP-based decomposition consistently translates into reduced ADMM runtime across nearly all partition sizes. The only notable exception occurs at $P=15$, where the BFS-based decomposition converges slightly faster, and the GNN-based approach achieves the shortest runtime. In addition, GNN also outperforms the other methods at $P=17$, suggesting that the learned model is capable not only of emulating MILP-quality bipartitions but, in some instances, of producing even more effective reformulations. Meanwhile, there are partition sizes for which the GNN-based method performs worse than both BFS and MILP, indicating room for improvement. Although the model captures important structural patterns, it does not always align with the partition characteristics defined by MILP. These observations suggest potential gains from refining the training objective and better aligning it with downstream ADMM performance.
\begin{figure}[ht]
    \centering
    \begin{subfigure}{0.48\textwidth}
        \centering
        \includegraphics[width=\linewidth]{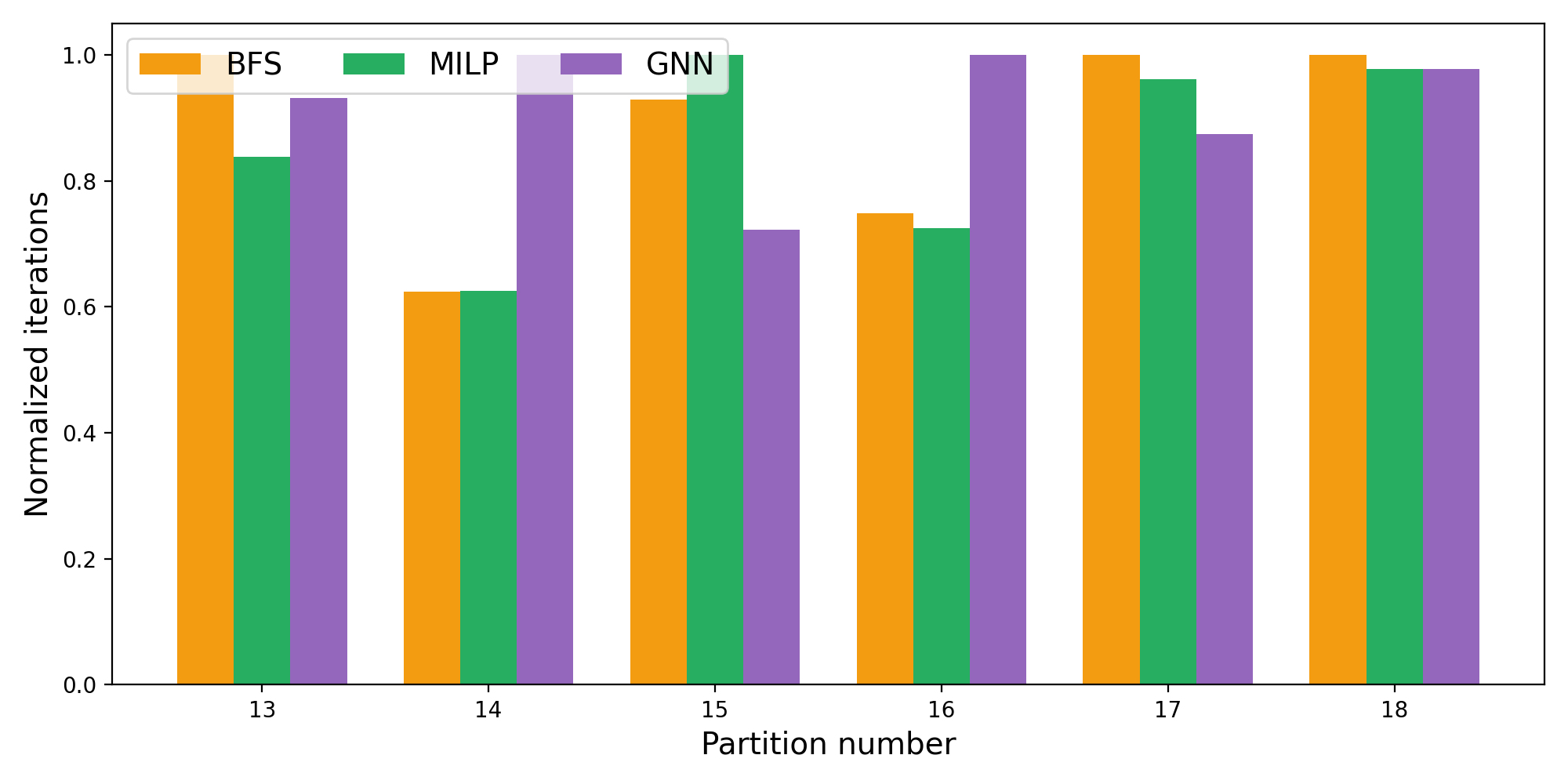}
        \caption{Iteration}
    \end{subfigure}
    \hfill
    \begin{subfigure}{0.48\textwidth}
        \centering
        \includegraphics[width=\linewidth]{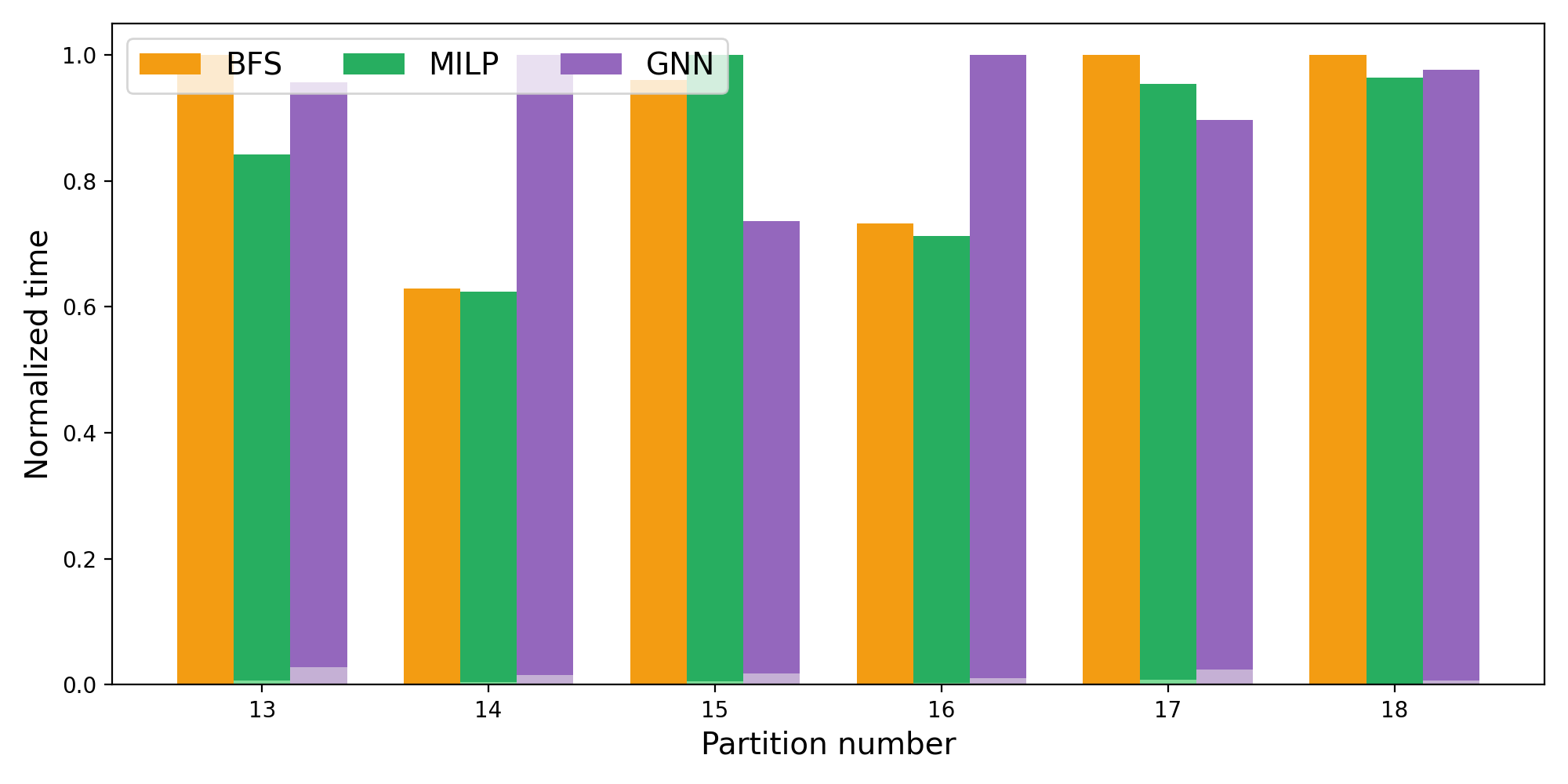}
        \caption{Total Time}
    \end{subfigure}
    \caption{Iterations and total time comparison of FLiP-ADMM $(\rho=1000)$ over different numbers of zones. Partition time is shown in a lighter tone. The max time/iteration is normalized to 1.0.} 
    \label{figure: opf_flip_admm}
\end{figure}
\subsection{Decentralized Consensus Optimization}\label{sec: consensus_opt}
We complement our experiments on structured problems with decentralized consensus optimization, where a connected network of agents collaboratively minimize the sum of local objective functions over a shared decision variable and information exchange is limited to neighboring agents. Formally speaking, given a connected network $G(V,E)$, the goal is to solve the problem 
\begin{align}\label{eq: decentralized_nlp}
    \min_{\bft{x}} \sum_{i\in V} f_i(\bft{x}),
\end{align}
and agent $i$ can only communicate with agent $j$ if $(i,j)$ or $(j,i)$ is in $E$.

Shi et al. \cite{shi_linear_2014} studied the convergence of ADMM over the following standard reformulation: 
\begin{subequations}\label{eq: standard_consensus}
\begin{align}
    \min_{\stackrel{\{\bft{x}_i\}_{i\in V}}{\{\bft{z}_{ij}\}_{(i,j)\in E}}} \quad & \sum_{i\in V} f_i(\bft{x}_i)\\
    \mathrm{s.t.} \quad & \bft{x}_i = \bft{z}_{ij}, ~\bft{x}_j = \bft{z}_{ij},~\forall(i,j) \in E.
\end{align}
\end{subequations}
This reformulation can be interpreted as a trivial bipartization of the communication graph: every edge $(i,j)\in E$ is subdivided by introducing an auxiliary variable $\bft{z}_{ij}$, so that all nodal variables $\{\bft{x}_i\}_{i\in V}$ lie in one partition and all auxiliary variables $\{\bft{z}_{ij}\}_{(i,j)\in E}$ lie in the other. While this construction guarantees that the transformed graph is bipartite and hence the successful execution of parallel ADMM, it enforces consensus through auxiliary variables on every original edge, leading to doubled couplings and increased communication in ADMM iterations. 

The proposed bipartization pipeline, in contrast, aims to create a more compact bipartite graph. In BFS-based bipartization, only edges that form odd cycles will be subdivided during graph traversal, and the MILP-based bipartization enforces direct control on the number of nodes in the final bipartite graph while maintaining balanced partitions. As a consequence, many neighboring agents remain directly connected across the two partitions and can perform alternating updates without introducing consensus variables. This reduced coupling not only lowers the problem dimension but also enables more efficient decentralized ADMM iterations, where updates propagate directly among (a subset of) adjacent agents rather than being mediated through edge-wise auxiliary variables. 

\paragraph{Problem Generation.} Given the number of nodes $|V|\in \{50, 100, 200\}$, we sample the graph connectivity ratio $p$ uniformly from $[2/|V|, 10/|V|]$, which in turn determines the number of edges as $|E| =p|V|(|V|-1)/2$. We then construct a random graph with $|V|$ nodes and $|E|$ edges while ensuring the graph stay connected, i.e., create a tree of $|V|$ nodes and then randomly add the remaining edges. We consider a decentralized consensus least squares problem in the form of \eqref{eq: decentralized_nlp}. We generate a ground truth vector $\bar{\bft{x}}\in \R^{500}$ with elements following the normal distribution $\mathcal{N}(0,1)$. For $i\in [|V|]$, we set $f_i(\bft{x}) = \|Q_i \bft{x} - q_i\|_2^2$, where $Q_i\in \R^{250 \times 500}$ is the linear measurement matrix whose elements follow the normal distribution $\mathcal{N}(0,1)$, and $q_i = Q_i \bar{\bft{x}} + \xi_i$ is the measurement vector whose elements are polluted by random noise $\xi_i$ following $\mathcal{N}(0, 0.1)$. 

\paragraph{Graph Features.}
For each $|V|\in \{50, 100, 200\}$, we generate 5 instances with different seeds, and apply the basic reformulation \eqref{eq: standard_consensus}, BFS-based bipartization, and MILP-based bipartization to obtain different bipartite graphs $\widehat{G}(\widehat{V}=\texttt{L}\sqcup \texttt{R} , \widehat{E})$. For MILPs solved by \texttt{HiGHS}, we
experiment with different $\texttt{mip\_rel\_gap}$ ranging from $\{1\%, 5\%, 10\%,20\%\}$. However, we note that a time limit of 60 seconds is set and hence the specified gap may not be achieved upon termination of MILP. Table \ref{table: graph_feature} summarizes the average features of original and bipartite graphs. We observe that BFS and MILP produce substantially more balanced bipartite graphs than the basic reformulation, reflected by the larger balance score, i.e., the ratio between $\min\{|\texttt{L}|,|\texttt{R}|\}$ and $\max\{|\texttt{L}|,|\texttt{R}|\}$. Moreover, the higher average degree $2|\widehat{E}|/|\widehat{V}|$ indicates a more compact distributed reformulation, with fewer auxiliary nodes and more direct couplings preserved across partitions, which is beneficial for parallel efficiency and communication cost. For MILP-based bipartization, increasing the relative optimality gap leads to a significant reduction in partitioning time. As we show next, the resulting partitions still exhibit good quality in practice.

The GNN-based bipartization consistently produces graphs that are substantially better than both the basic reformulation and the BFS heuristic in terms of structural metrics. In particular, the resulting balance scores are close to or even match those of the MILP-based approach, and the average degrees are similarly high, indicating compact reformulations with well-preserved couplings. The gaps between GNN and MILP metrics are generally small across all graph sizes, suggesting that the learned model successfully approximates high-quality MILP partitions. In terms of partition time, GNN achieves a favorable trade-off: it requires slightly more preprocessing time than BFS, but is significantly faster than MILP, especially under tight optimality gaps. As the graph size increases, the relative advantage in partition time becomes more pronounced while maintaining near-MILP structural quality. These results demonstrate that GNN provides an effective middle ground—combining the speed of heuristic methods with partition quality close to optimization-based approaches.

\begin{table}[ht]
\centering 
\begin{tabular}{cccccc}
\toprule
\multicolumn{1}{l}{$|V|$} & \multicolumn{1}{c}{Avg. $|E|$} & Part.      & $\frac{2|\widehat{E}|}{|\widehat{V}|}$ & $\frac{\min\{|\texttt{L}|,|\texttt{R}|\}}{\max\{|\texttt{L}|,|\texttt{R}|\}}$& Time(P) \\
\hline 
\multirow{7}{*}{50}  & \multirow{7}{*}{139}        
      & Basic      & 2.91	& 0.38	& 0.00  \\
 &    & BFS        & 3.73	& 0.83	& 0.00  \\
 &    & MILP(1\%)  & 4.16	& 0.89	& 16.42 \\
 &    & MILP(5\%)  & 4.14	& 0.89	& 12.87 \\
 &    & MILP(10\%) & 4.10	& 0.88	& 9.37  \\
 &    & MILP(20\%) & 4.02	& 0.88	& 3.68  \\ 
 &    & GNN        & 4.06	& 0.90	& 6.27  \\
\hline 
\multirow{7}{*}{100}  & \multirow{7}{*}{282}        
      & Basic      & 2.92	& 0.37	& 0.00  \\
 &    & BFS        & 3.77	& 0.74	& 0.00  \\
 &    & MILP(1\%)  & 4.21	& 0.93	& 54.57 \\
 &    & MILP(5\%)  & 4.20	& 0.90	& 45.52 \\
 &    & MILP(10\%) & 4.17	& 0.93	& 41.06 \\
 &    & MILP(20\%) & 4.12	& 0.92	& 22.26 \\
 &    & GNN        & 4.14	& 0.93	& 7.51  \\
\hline 
\multirow{7}{*}{200}  & \multirow{7}{*}{567}        
      & Basic      & 2.93	& 0.37	& 0.00  \\
 &    & BFS        & 3.79	& 0.71	& 0.00  \\
 &    & MILP(1\%)  & 4.17	& 0.94	& 62.39 \\
 &    & MILP(5\%)  & 4.17	& 0.95	& 59.65 \\
 &    & MILP(10\%) & 4.18	& 0.97	& 54.49 \\
 &    & MILP(20\%) & 4.15	& 0.91	& 48.27 \\
 &    & GNN        & 4.18	& 0.97	& 10.78 \\
\bottomrule
\end{tabular}
\caption{Average features of bipartite graphs generated by different methods.}\label{table: graph_feature}
\end{table}

\paragraph{Performance of ADMM.}
We run both the original ADMM and FLiP-ADMM with $\rho=10$\ on instances summarized in Table~\ref{table: graph_feature}, and report results (averaged over 5 seeds) in Fig.~\ref{figure: distributed_comparison_admm} and Fig.~\ref{figure: distributed_comparison_flip}. The BFS-, MILP-, and GNN-based bipartizations all yield nontrivial reductions in both iterations and total runtime for the original ADMM as well as FLiP-ADMM across all problem sizes, demonstrating the effectiveness of the proposed bipartization framework.

For the MILP-based approach, looser relative optimality gaps generally result in slightly worse reformulations, requiring more ADMM iterations. However, the partitioning overhead becomes the dominant factor in total runtime: larger gaps significantly reduce MILP solve time, and, despite the modest increase in ADMM iterations, often lead to faster overall convergence.

The GNN-based method typically requires fewer ADMM iterations than MILP with larger optimality gaps, indicating that it effectively emulates high-quality MILP partition decisions. In terms of total runtime, GNN is better than or comparable to the fastest MILP configuration across different gap settings, achieving near-MILP structural quality at substantially lower partition cost.

Finally, BFS incurs virtually no partition overhead and delivers surprisingly competitive performance on all generated instances. Although its structural metrics are generally inferior to MILP and GNN, it consistently provides meaningful speedups over the basic reformulation, highlighting its practicality as a lightweight heuristic.

\begin{figure}[H]
    \centering
    \begin{subfigure}{0.48\textwidth}
        \centering
        \includegraphics[width=\linewidth]{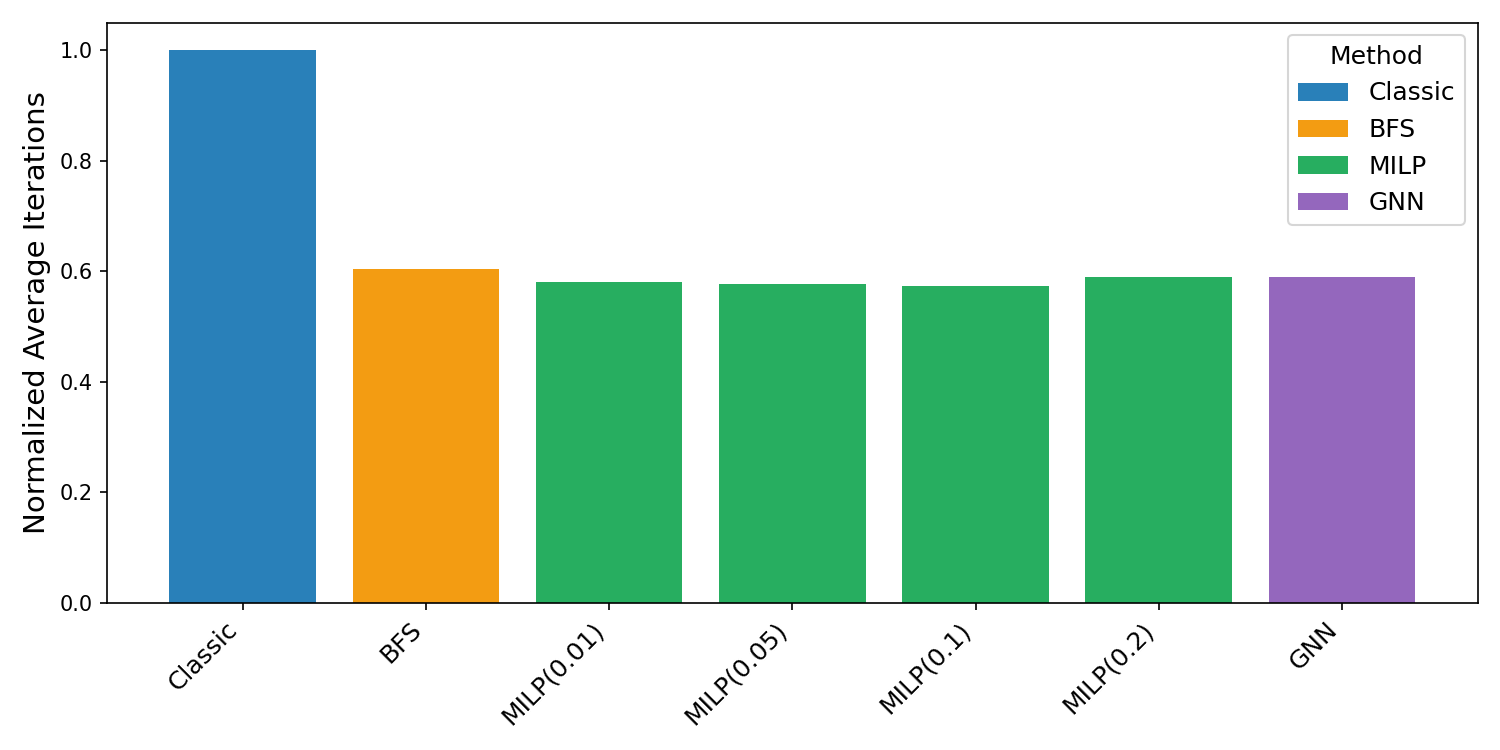}
        \caption{Original ADMM iteration, $|V|=50$}
    \end{subfigure}
    \hfill
    \begin{subfigure}{0.48\textwidth}
        \centering
        \includegraphics[width=\linewidth]{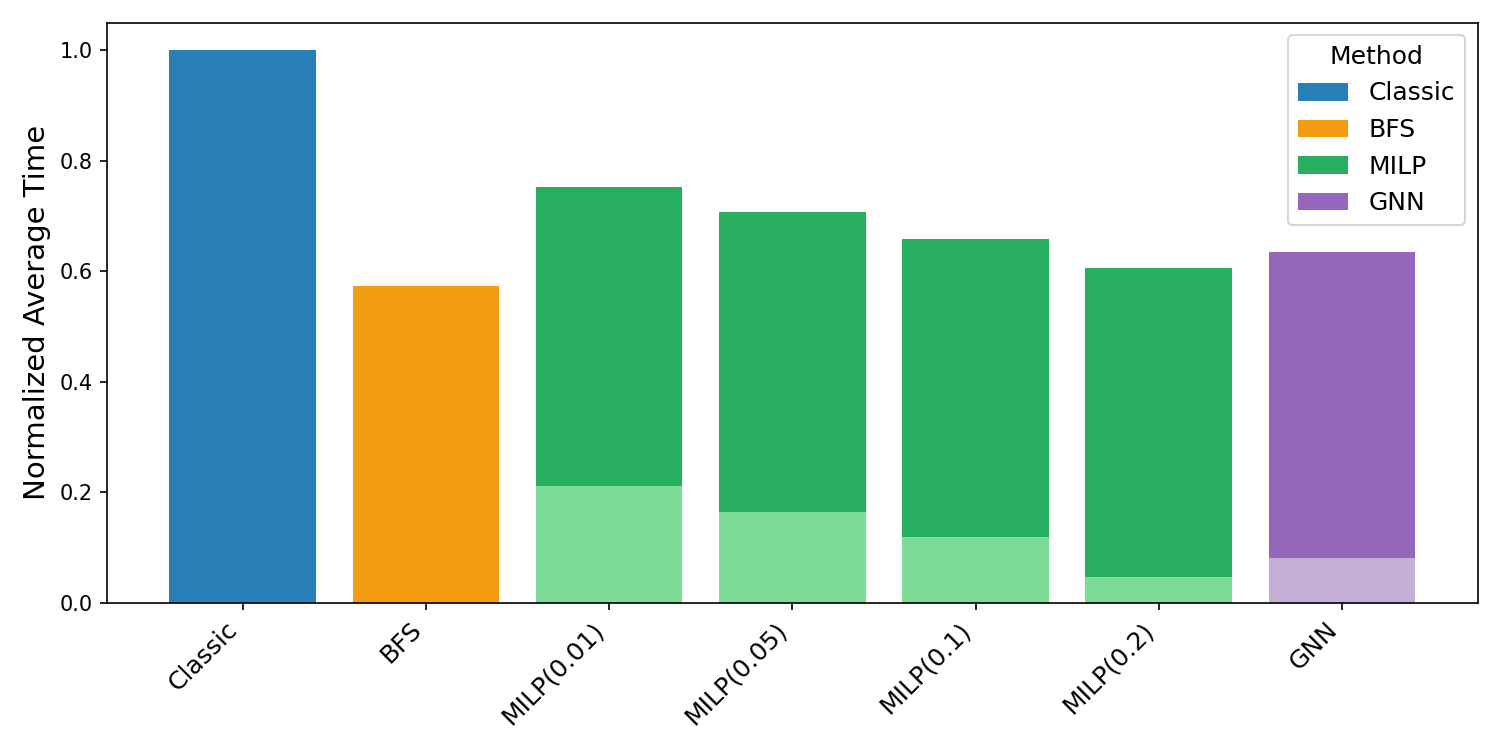}
        \caption{Original ADMM time, $|V|=50$}
    \end{subfigure}
    \begin{subfigure}{0.48\textwidth}
        \centering
        \includegraphics[width=\linewidth]{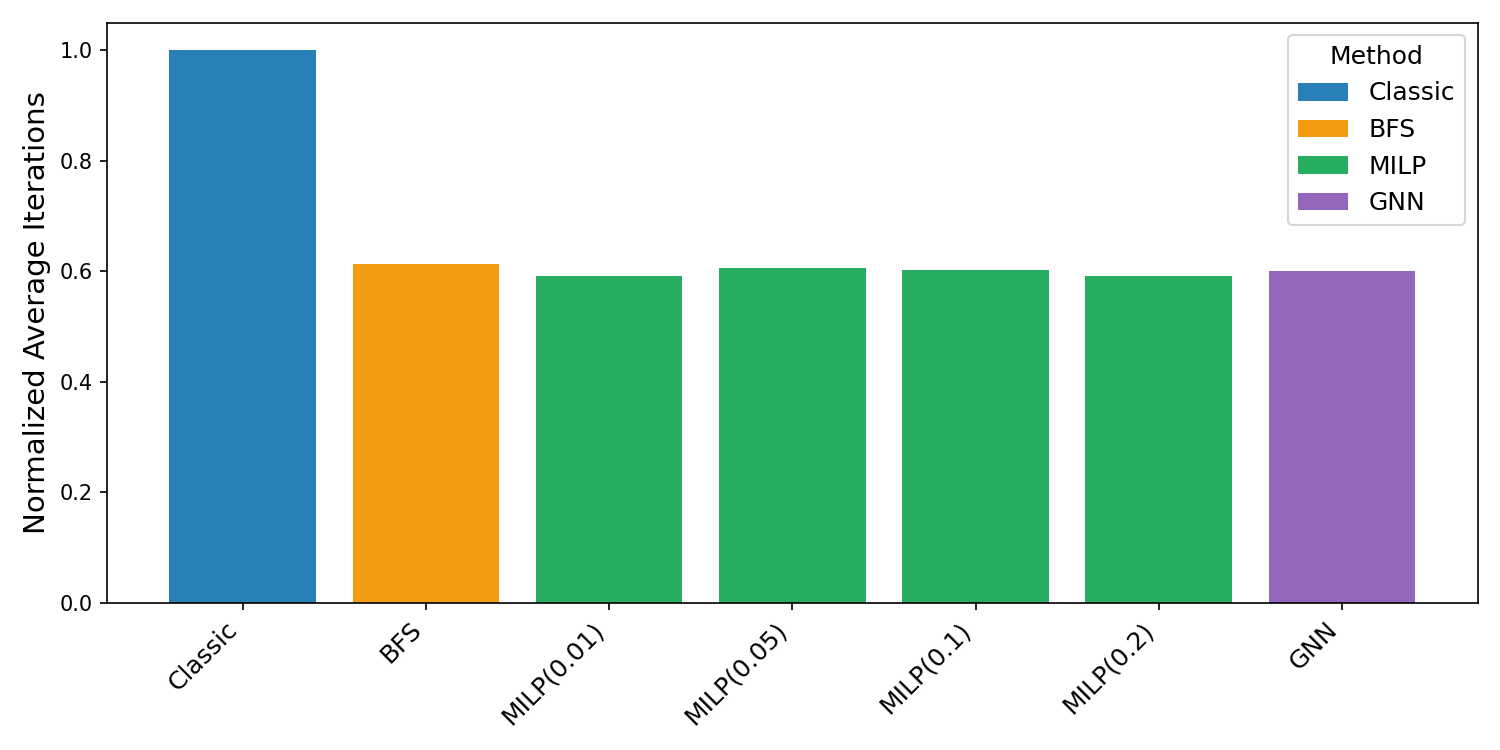}
        \caption{Original ADMM iteration, $|V|=100$}
    \end{subfigure}
    \hfill
    \begin{subfigure}{0.48\textwidth}
        \centering
        \includegraphics[width=\linewidth]{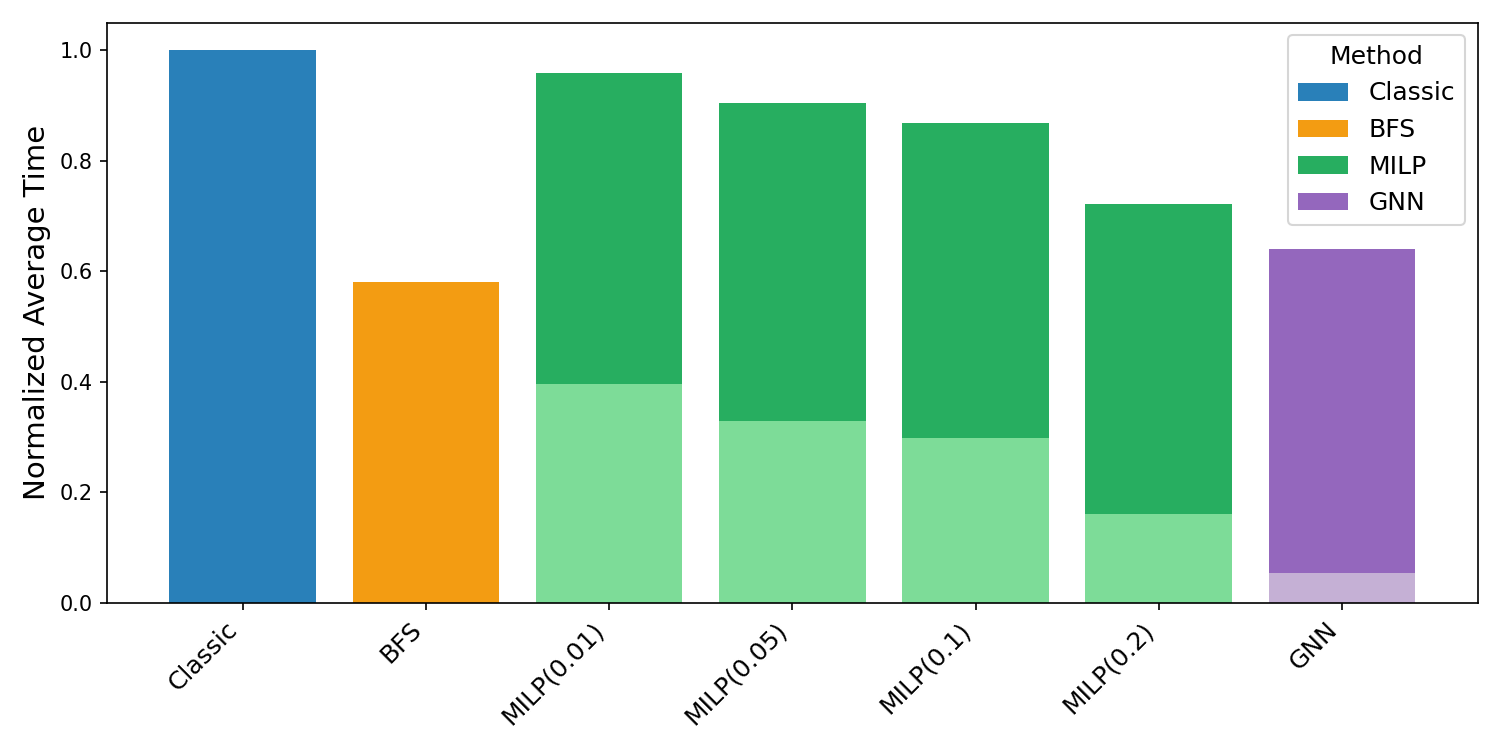}
        \caption{Original ADMM time, $|V|=100$}
    \end{subfigure}
    \begin{subfigure}{0.48\textwidth}
        \centering
        \includegraphics[width=\linewidth]{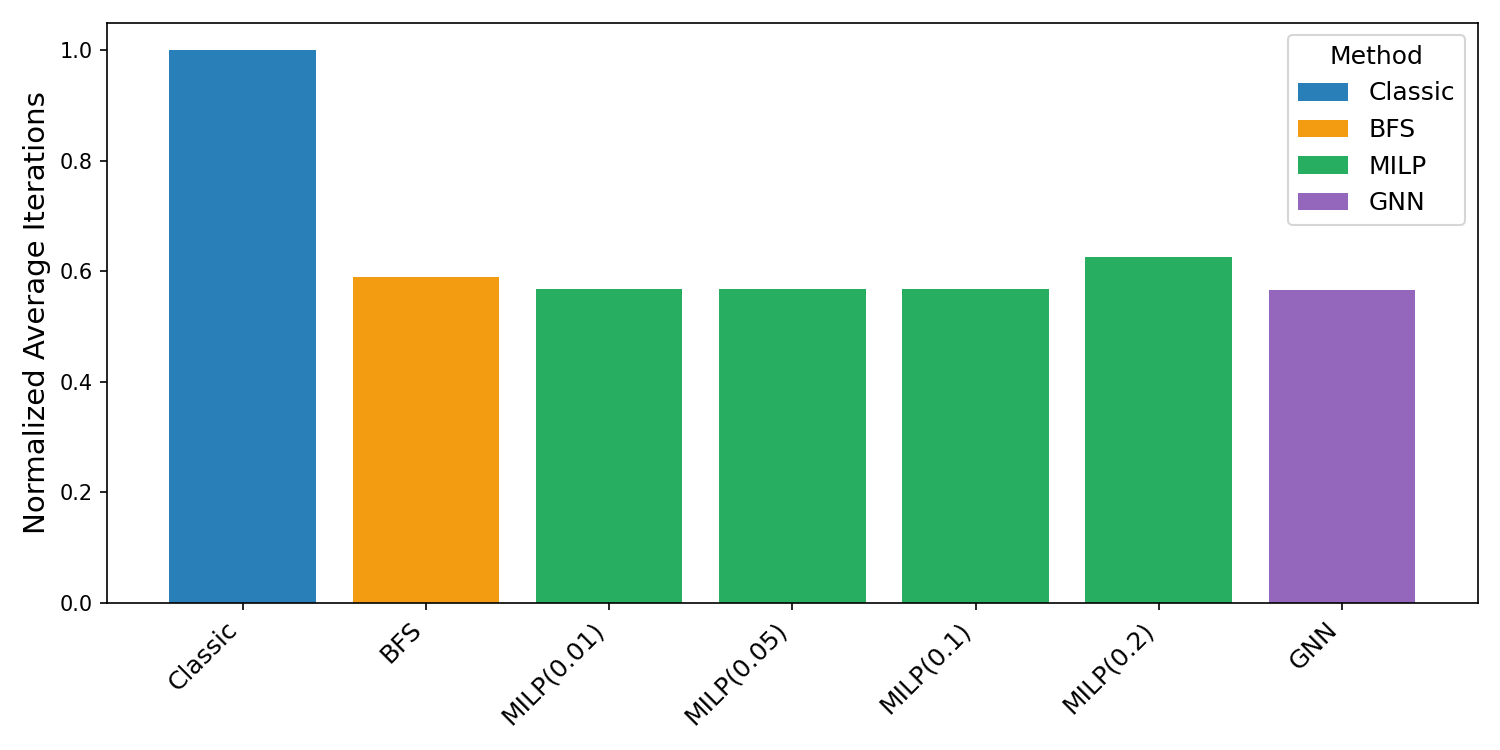}
        \caption{Original ADMM iteration, $|V|=200$}
    \end{subfigure}
    \hfill
    \begin{subfigure}{0.48\textwidth}
        \centering
        \includegraphics[width=\linewidth]{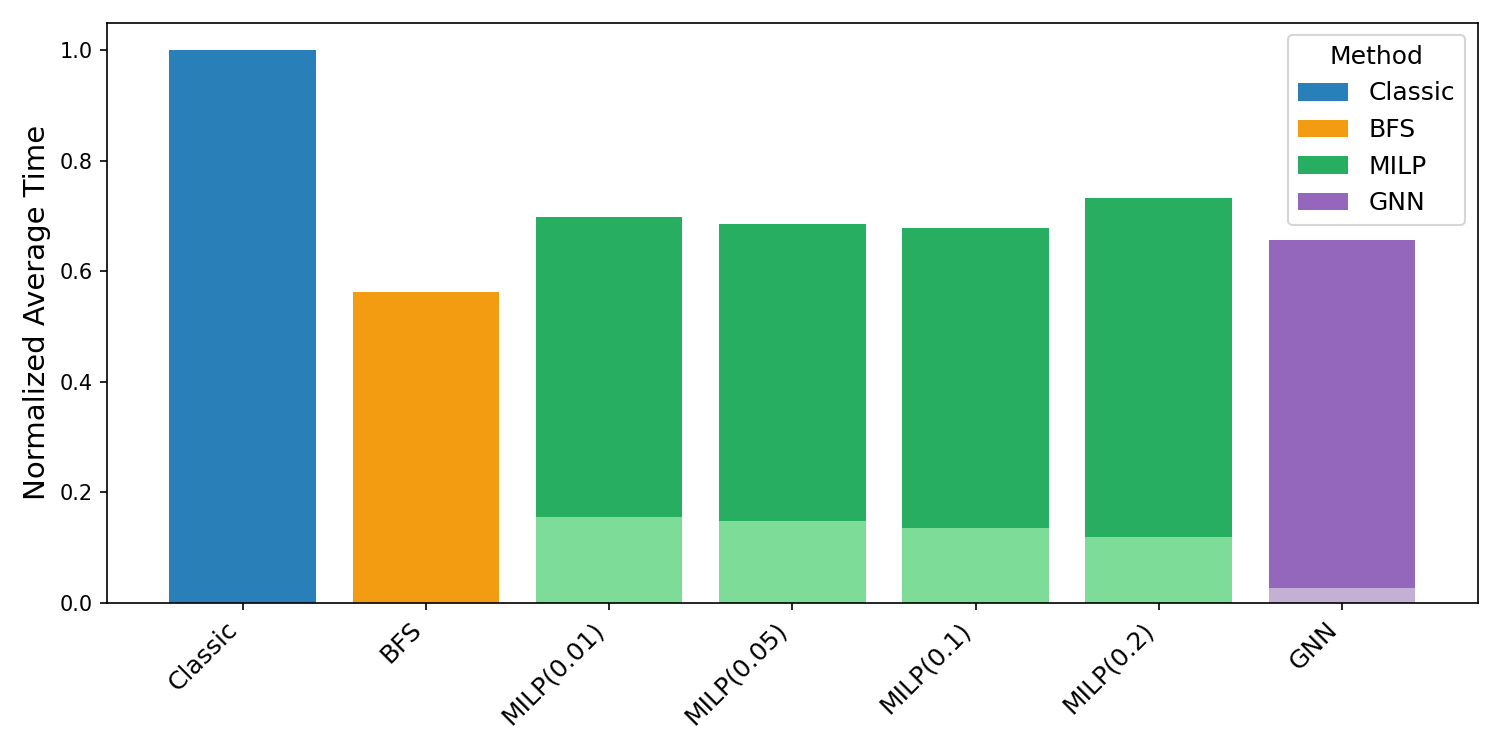}
        \caption{Original ADMM time, $|V|=200$}
    \end{subfigure}
     \caption{Averaged iterations and time comparison on different reformulations by ADMM. Partition time is shown in a lighter tone. The max time/iteration is normalized to 1.0.} 
    \label{figure: distributed_comparison_admm}
\end{figure}

\begin{figure}[ht]
    \centering
    \begin{subfigure}{0.48\textwidth}
        \centering
        \includegraphics[width=\linewidth]{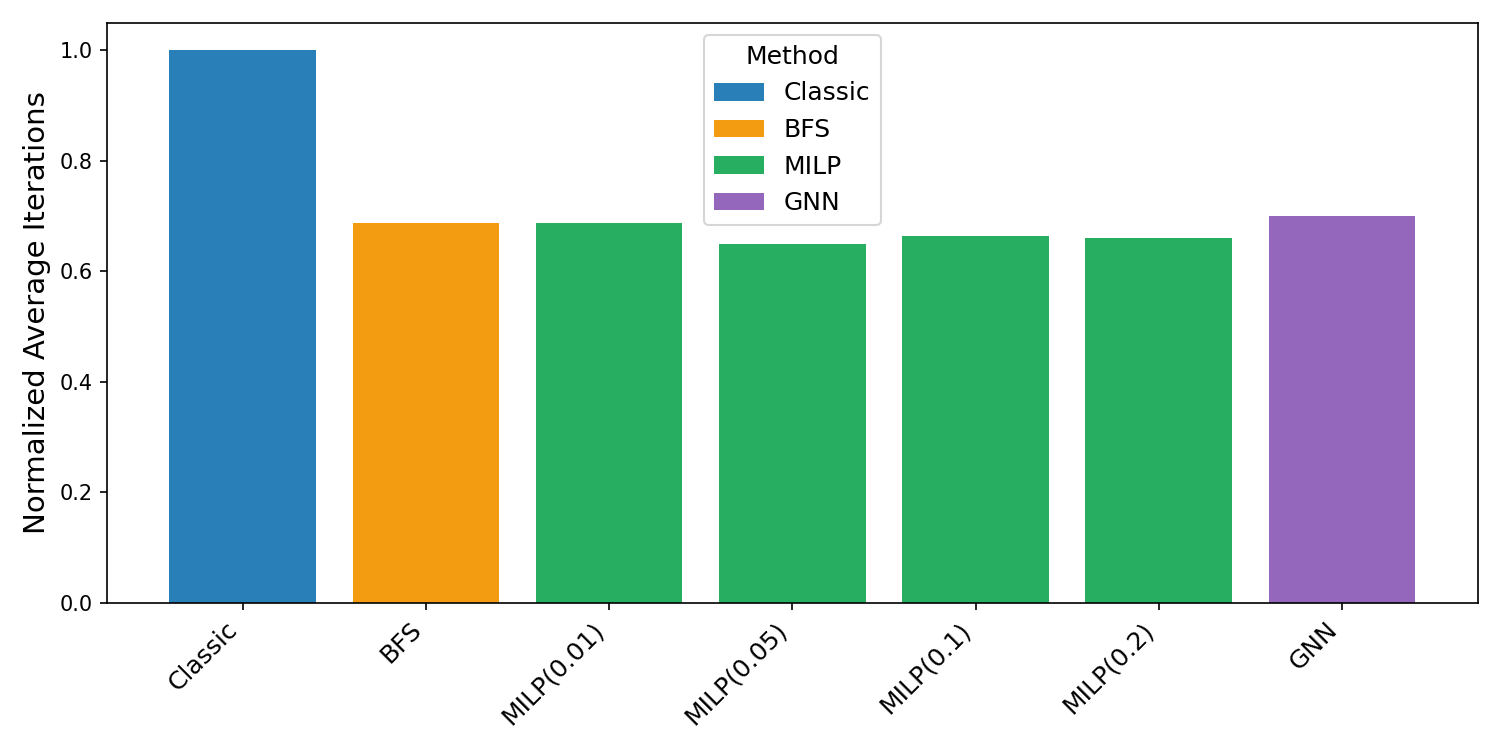}
        \caption{FLiP-ADMM iteration, $|V|=50$}
    \end{subfigure}
    \begin{subfigure}{0.48\textwidth}
        \centering
        \includegraphics[width=\linewidth]{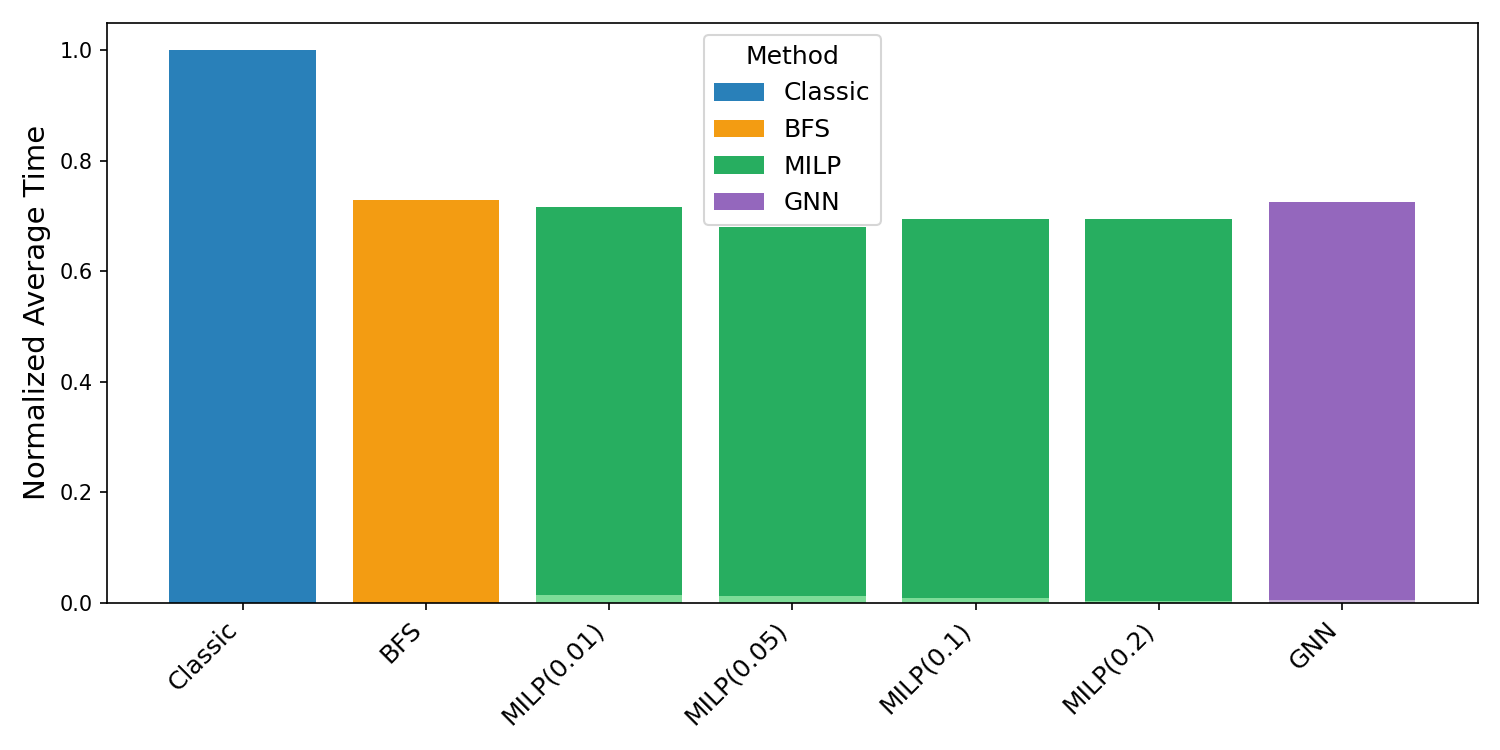}
        \caption{FLiP-ADMM time, $|V|=50$}
    \end{subfigure}
    \begin{subfigure}{0.48\textwidth}
        \centering
        \includegraphics[width=\linewidth]{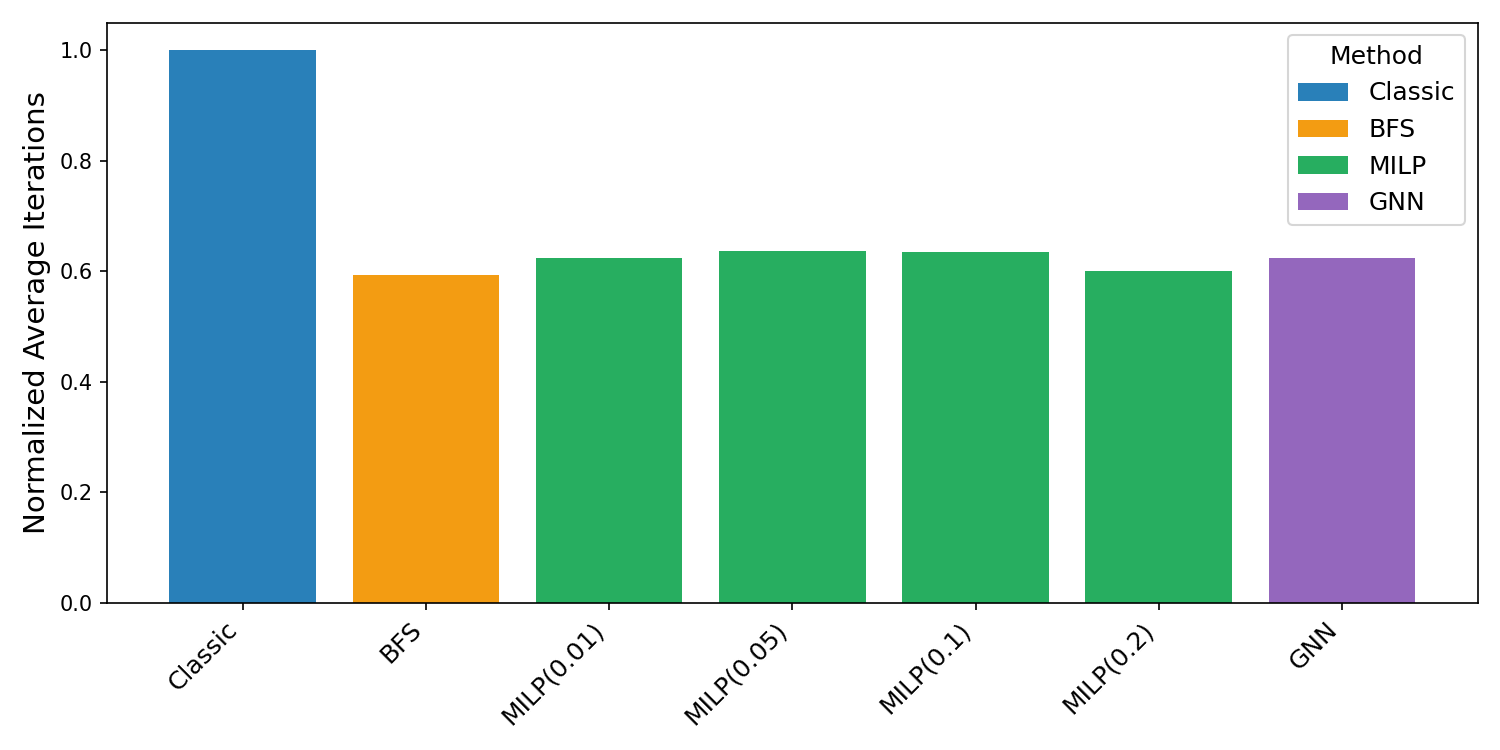}
        \caption{FLiP-ADMM iteration, $|V|=100$}
    \end{subfigure}
    \hfill
    \begin{subfigure}{0.48\textwidth}
        \centering
        \includegraphics[width=\linewidth]{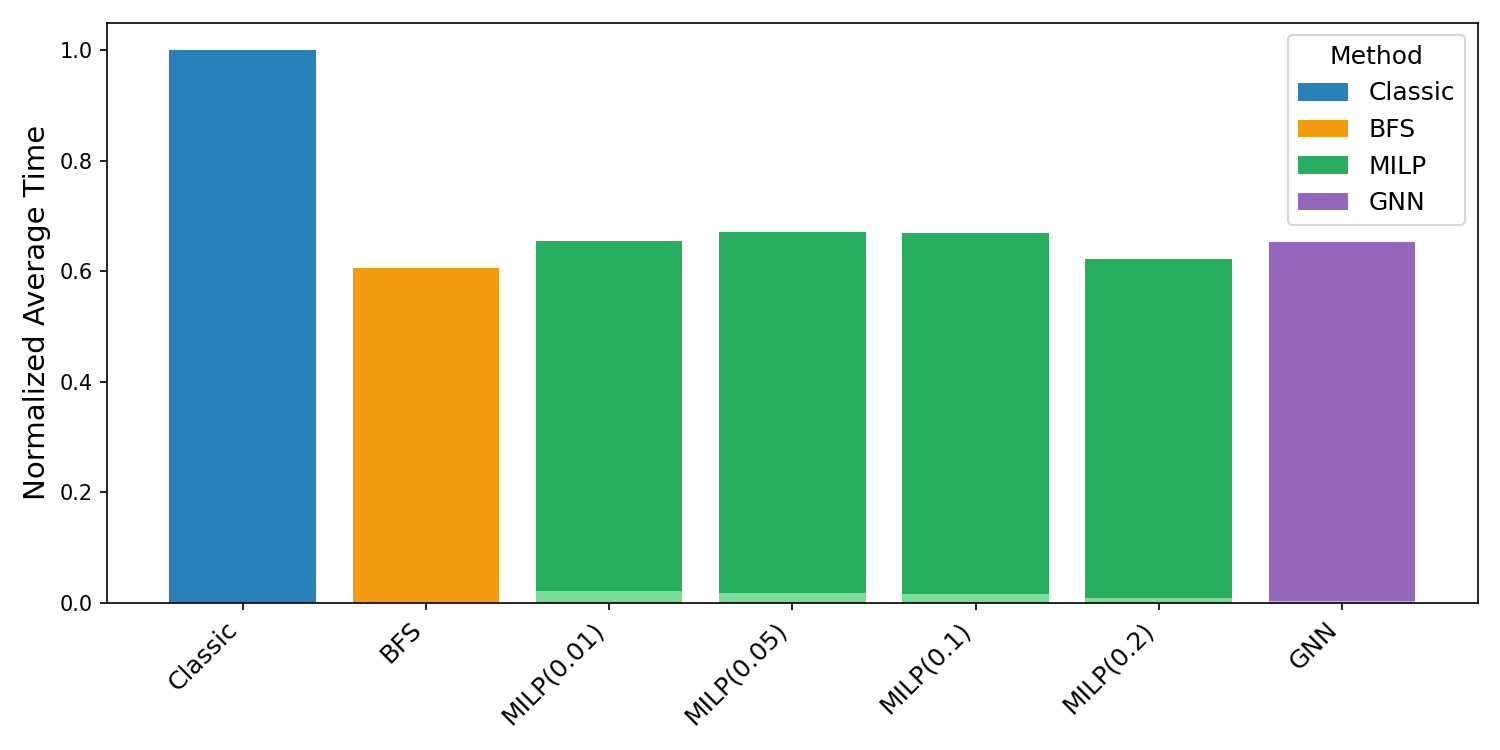}
        \caption{FLiP-ADMM time, $|V|=100$}
    \end{subfigure}
    \begin{subfigure}{0.48\textwidth}
        \centering
        \includegraphics[width=\linewidth]{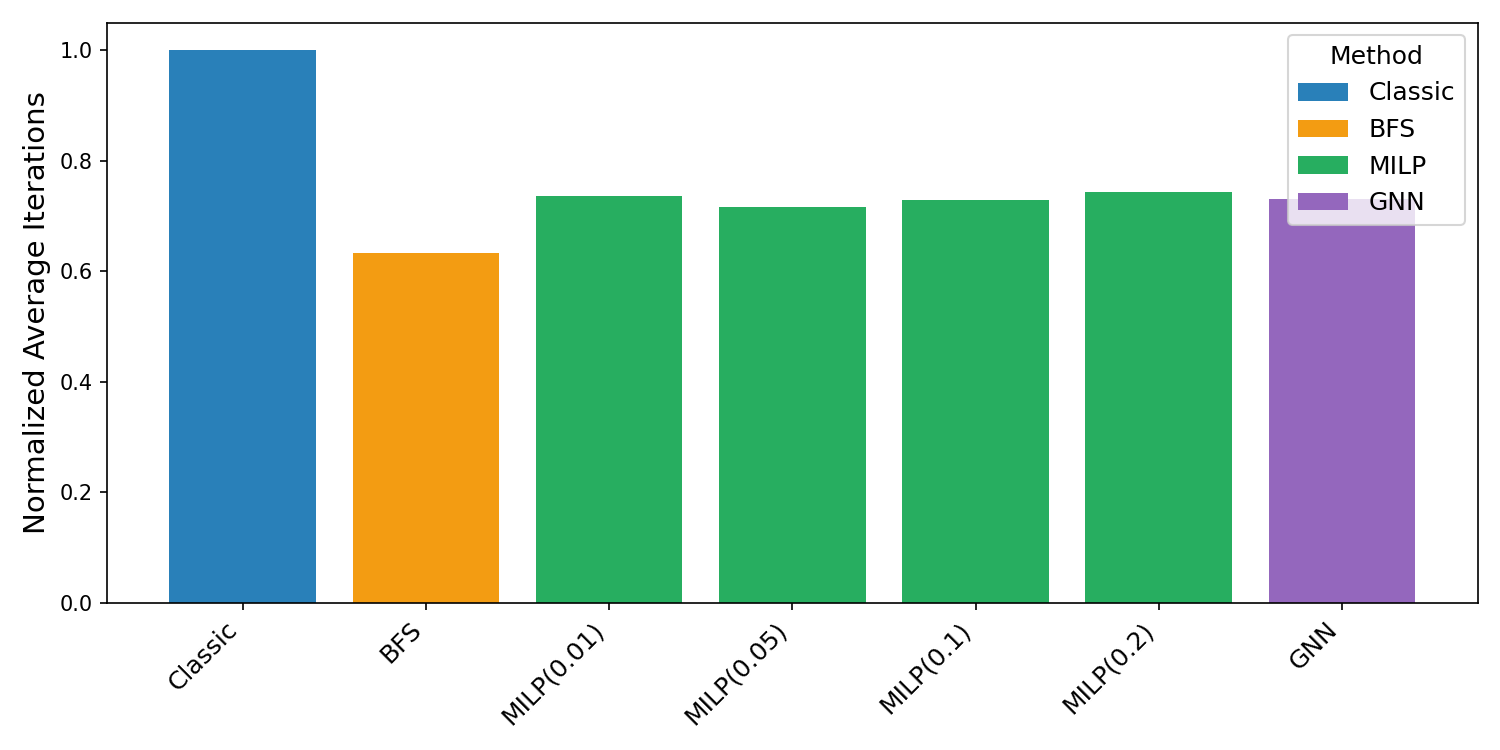}
        \caption{FLiP-ADMM iteration, $|V|=200$}
    \end{subfigure}
    \hfill
    \begin{subfigure}{0.48\textwidth}
        \centering
        \includegraphics[width=\linewidth]{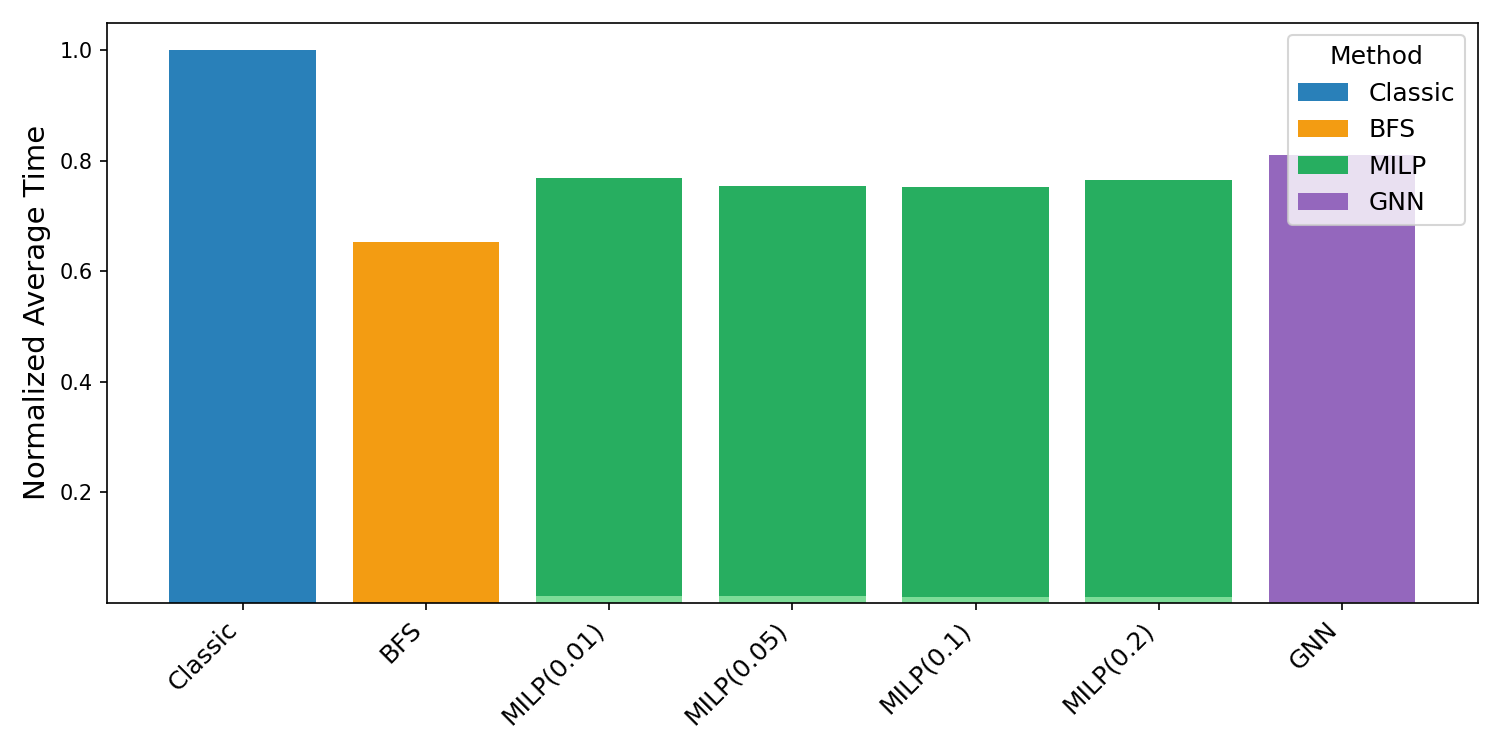}
        \caption{FLiP-ADMM time, $|V|=200$}
    \end{subfigure}
    \caption{Averaged iterations and time comparison on different reformulations by FLiP-ADMM. Partition time is shown in a lighter tone. The max time/iteration is normalized to 1.0.} 
    \label{figure: distributed_comparison_flip}
\end{figure}

\section{Conclusion}
This paper presents a fully automated pipeline that reformulates multiblock optimization models into a structure compatible with parallel ADMM. By representing the model as a coupling graph and enforcing bipartiteness through edge subdivision, the framework produces decompositions in which ADMM updates separate cleanly across blocks. Three bipartization strategies are studied: BFS as a lightweight heuristic, MILP as a performance-driven optimizer, and GNN as a student model trained to approximate MILP decisions.

Our experiments demonstrate that bipartization consistently improves scalability relative to the baseline formulation. However, several performance anomalies deserve careful interpretation. First, the MILP objective serves only as a surrogate metric for downstream ADMM performance; it is not a gold standard. While MILP-generated partitions are usually, but not always, associated with faster convergence, the surrogate objective does not perfectly align with the true ADMM contraction behavior. This mismatch explains cases where BFS performs competitively or where looser MILP optimality gaps yield lower total runtime. MILP can also incur substantial preprocessing cost, especially for small or one-off problems. In contrast, GNN achieves near-MILP partition quality with much lower preprocessing time, though its performance depends on how well the training data represent the target problem distribution.

These observations clarify appropriate usage scenarios. BFS provides a negligible-cost baseline suitable for quick experimentation or single-instance problems. MILP is most appropriate for offline analysis, fixed-structure models solved repeatedly with varying data, or challenging problems where decomposition quality is critical. GNN potentially offers an attractive trade-off for large-scale recurring problems whose structures follow a stable distribution, combining near-MILP quality with fast inference.

These observations also suggest several directions for future research. First, improved MILP surrogate objectives that more directly reflect ADMM convergence behavior are needed. The current structural metrics do not always capture the convergence properties of the ADMM iterations. Second, learning-based approaches could move beyond imitating MILP and instead directly optimize downstream performance metrics, such as predicted ADMM iteration counts or runtime. Integrating ADMM feedback signals into training, designing differentiable performance proxies, or developing hybrid optimization–learning frameworks are promising avenues. Finally, establishing theoretical connections between graph structure, bipartization quality, and ADMM convergence rates would provide a stronger foundation for both surrogate design and data-driven methods. The open-source implementation in \texttt{PDMO.jl} provides a practical platform for large-scale block-structured optimization and a foundation for these extensions.

\bibliographystyle{spmpsci}\bibliography{ref.bib}

\appendix 
\section{Implementation Details of the GNN Model and Training}\label{section: gnn-details}
\subsection{Graph Neural Network Architecture}\label{subsection: gnn-architecture}

The architecture of the proposed Graph Neural Network (GNN) is illustrated in Fig.~\ref{fig:GNN-Architecture}. The model is designed to learn effective node representations for bipartite graph partitioning by processing graph topology, node features, and edge features through a deep stacked encoder.
\begin{figure}[htbp] 
    \centering 
    \includegraphics[width=0.9\textwidth]{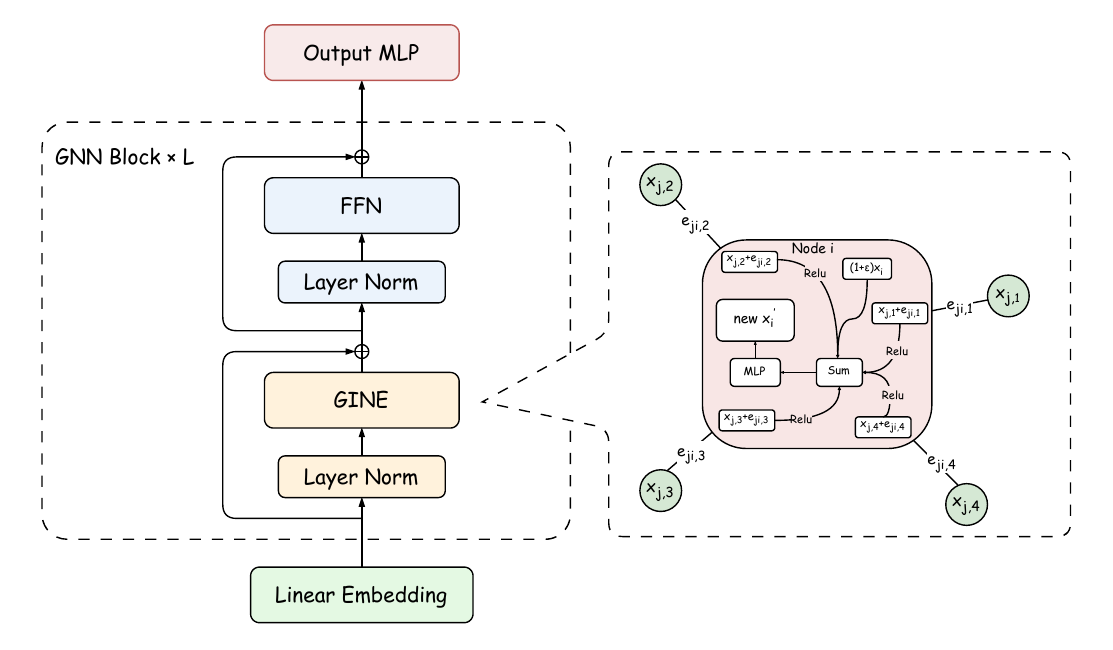}
    \caption{GNN architecture.} 
    \label{fig:GNN-Architecture} 
\end{figure}

\paragraph{\small Overall Pipeline}
The network consists of $L$ sequentially stacked GNN blocks, as indicated by ``GNN Block $\times L$'' in the figure. The input to the first block is the initial node feature matrix $\mathbf{H}^{(0)}$. Each block progressively refines these features, and the output of the final block, $\mathbf{H}^{(L)}$, is fed into a simple output MLP head to produce the final predictions (e.g., node assignment probabilities).

\paragraph{\small GNN Block Design}
Each GNN block is the core computational unit and follows a stabilized ``pre-normalization'' design inspired by modern transformer and graph network architectures. A single block executes the following sequence of operations:
\begin{enumerate}
    \item \textbf{Layer Normalization.} The input node features $\mathbf{H}^{(l-1)}$ are first normalized.
    
    \item \textbf{GINE Convolution Layer.} This is the central graph-aware operation. We employ the GINE ~\cite{hu2019strategies} (Graph Isomorphism Network with Edge features) convolution operator, chosen for its strong expressive power in capturing graph structures and its native support for edge features. The layer performs message passing, where each node aggregates information from its neighboring nodes and the features of the connecting edges.
    
    \item \textbf{Residual Connection.} The output of the GINE layer is added to the block input $\mathbf{H}^{(l-1)}$ via a skip connection, which helps mitigate vanishing gradient problems in deep networks.
    
    \item \textbf{Second Layer Normalization.} The result of the addition is normalized again.
    
    \item \textbf{Feed-Forward Network (FFN).} A small shared Multi-Layer Perceptron (MLP) is applied to each node's feature vector. This FFN, typically consisting of two linear layers with a nonlinearity (e.g., ELU) in between, provides additional nonlinear transformation capacity.
    
    \item \textbf{Second Residual Connection.} The output of the FFN is added to the input of the second normalization step.
\end{enumerate}

\paragraph{\small Detailed Message Passing in GINE Layer}
The right side of Fig.~17 zooms in on the update mechanism for a single node $i$ within a GINE\cite{hu2019strategies} layer. The update for node $i$ at layer $l$ is computed as follows:
\[
\mathbf{h}_i^{(l)} =
\text{MLP}^{(l)} \left(
(1 + \epsilon^{(l)}) \cdot \mathbf{h}_i^{(l-1)}
+ \sum_{j \in \mathcal{N}(i)} \text{ReLU} \left( \mathbf{h}_j^{(l-1)} + \mathbf{e}_{ji} \right)
\right)
\]

\noindent where:
\begin{itemize}
    \item $\mathbf{h}_i^{(l-1)}$ is the feature vector of node $i$ from the previous layer.
    \item $\mathcal{N}(i)$ is the set of neighboring nodes of $i$.
    \item $\mathbf{e}_{ji}$ is the feature vector of the edge connecting neighbor $j$ to node $i$.
    \item $\epsilon^{(l)}$ is a learnable scalar parameter.
    \item The summation $\sum$ represents the aggregation of messages from all neighbors.
    \item A shared MLP (distinct from the block FFN) transforms the aggregated information.
\end{itemize}

This formulation allows the model to learn a representation for node $i$ that captures both its own state and the relevant structural and feature-based information from its immediate graph neighborhood.

\paragraph{\small Output Head}
After $L$ blocks of iterative refinement, the final node embeddings $\mathbf{H}^{(L)}$ are passed through an output MLP head. This is a simple MLP that maps the high-dimensional node representations to the desired output space, generating, for example, a two-dimensional vector for binary node classification (left/right partition).



\subsection{Model Training Details}
This section provides a comprehensive description of the training protocol, hyperparameters, and implementation details of the GNN model introduced in Appendix~\ref{subsection: gnn-architecture}.

\paragraph{\small Optimization Objective}
The model was trained to perform node-level binary classification for graph bipartitioning. We employed the standard binary cross-entropy loss, averaged over all nodes in a batch:

\[
\mathcal{L} = -\frac{1}{N} \sum_{i=1}^{N} \left[ y_i \cdot \log(\hat{y}_i) + (1 - y_i) \cdot \log(1 - \hat{y}_i) \right]
\]

where  $y_i \in \{0, 1\}$  is the ground-truth partition label for node  $i$ , and  $\hat{y}_i$  is the model's predicted probability of the node  $i$.

\paragraph{\small{Data Preparation and Processing}}
Due to the scarcity of real-world training data in the optimization domain, we generate a synthetic dataset of Quadratic Programming (QP) problems with linear constraints to train and evaluate the GNN model. The dataset comprises 10{,}400 graph instances, split into 10{,}000 for training and 400 for testing.

\begin{itemize}
    \item \textbf{Graph Construction.} Each graph represents a QP instance, where nodes correspond to block variables. Variable dimensions are randomly sampled from $\{2,3,4,5\}$, and edges encode pairwise linear constraints with dimensions also sampled from $\{2,3,4,5\}$.

    \item \textbf{Parameter Generation.} All numerical parameters are independently sampled from standard normal distributions, producing diverse instances with varying curvature, scaling, and coupling.
    
    \item \textbf{Feasibility Guarantee.} After randomly generating the constraint parameters, we solve a small linear feasibility program to verify the existence of a solution. Any infeasible instance is discarded and regenerated. This procedure ensures that all training and test instances correspond to valid problems that admit at least one feasible solution.
\end{itemize}

\paragraph{\small Training Procedure and Hyperparameters.}
A comprehensive summary of the model architecture, optimization settings, and data configuration is provided in Table~\ref{table:hyperparameters}. Fig. \ref{fig:GNN Test Accuracy} illustrates the training performance (test accuracy versus epochs) of GNN under the configuration specified in Table \ref{table:hyperparameters}.

\begin{table}[h]
\centering
\begin{tabular}{@{}lll@{}}
\toprule
\textbf{Hyperparameter} & \textbf{Value/Setting} & \textbf{Description} \\
\midrule
\multicolumn{3}{c}{\textbf{Model Architecture}} \\
\cmidrule{1-3}
GNN Type & GINE & Graph Isomorphism Network with Edge features \\
Number of GNN Layers & 40 & Depth of the model (message passing steps) \\
Hidden Dimension & 64 & Dimensionality of node embeddings \\
Activation Function & ELU & Non-linear activation in MLPs \\
Normalization & LayerNorm & Applied before GNN layer and feed-forward network \\
Residual Connection & Yes & Skip connection in each GNN block \\
Dropout Rate & 0.5 & Applied after each feed-forward network \\
\midrule
\multicolumn{3}{c}{\textbf{Training Configuration}} \\
\cmidrule{1-3}
Optimizer & AdamW & Adam with decoupled weight decay \\
Learning Rate & 1e-5 & Initial learning rate \\
Weight Decay & 1e-5 & L2 regularization strength \\
Batch Size & 64 & Graphs per batch \\
Number of Epochs & 1000 & Maximum training epochs \\
Learning Rate Schedule & Cosine Annealing & Cosine decay \\
\midrule
\multicolumn{3}{c}{\textbf{Data \& Features}} \\
\cmidrule{1-3}
Node Feature Dimension & 20 & Initial node feature dimension (all-ones vector) \\
Edge Feature Dimension & 15 & Edge feature dimension (statistical, structural, geometric) \\
Graph Size (Training) & 20 nodes per graph & Fixed number of nodes in each generated graph \\
Dataset Size & 10,000 training, 400 test & Total 10,400 generated QP instances \\
\bottomrule
\end{tabular}
\caption{ Model hyperparameters and training configuration.}
\label{table:hyperparameters}
\end{table}

\begin{figure}[htbp] 
    \centering 
    \includegraphics[width=0.9\textwidth]{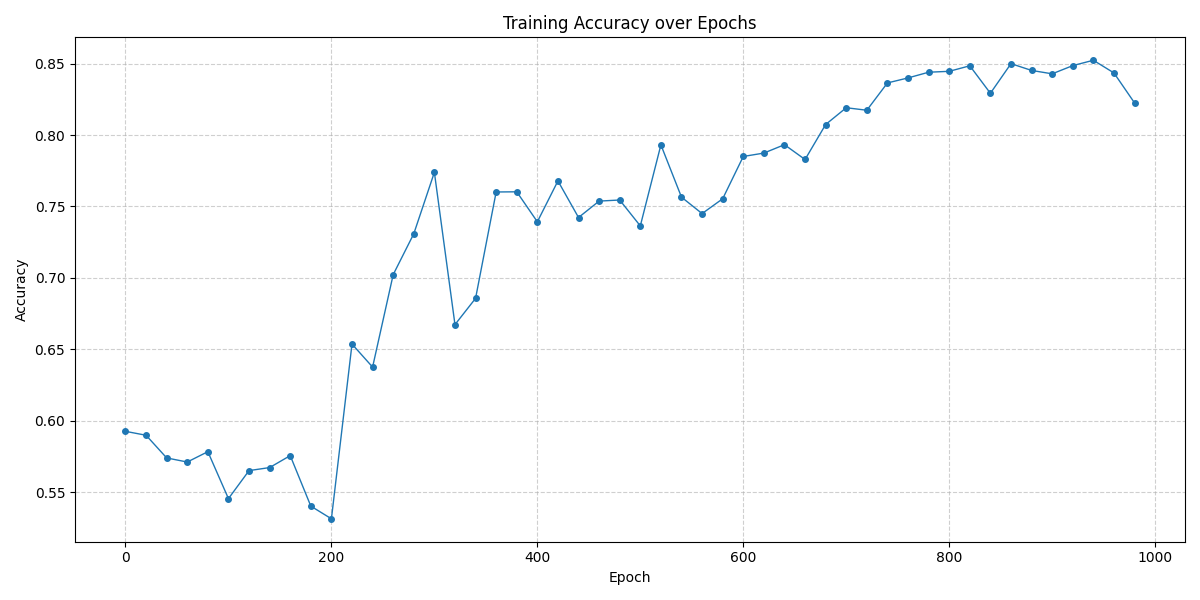}
    \caption{Training accuracy of GNN under hyperparameters in Table \ref{table:hyperparameters}.} 
    \label{fig:GNN Test Accuracy} 
\end{figure}

\end{document}